\newtheorem{pro}{Proposition}[section]
\newtheorem{thm}[pro]{Theorem}
\newtheorem{lem}[pro]{Lemma}
\newtheorem{clm}[pro]{Claim}
\theoremstyle{definition}
\newtheorem{dfn}[pro]{Definition}
\newtheorem{rmk}[pro]{Remark}
\newcommand{\VV}{\mathcal V}
\newcommand{\WW}{\mathcal W}
\newcommand{\CC}{\mathcal C}
\newcommand{\bdy}{\partial}
\newcommand{\BB}{\mathcal B}
\newcommand{\EE}{\mathcal E}
\newcommand{\DD}{\mathcal D}
\newcommand{\TT}{\mathcal T}
\newcommand{\KK}{\mathcal K}
\renewcommand{\slash}{/ \hspace{-.05in} /}
\newcommand{\ind}{{\rm ind}}
\newcommand{\plex}[1]{\ensuremath{[{#1}]}}
\newsavebox{\savepar}
\newenvironment{boxit3}
	{\begin{lrbox}{\savepar}\begin{minipage}[b]{3in}}
        {\end{minipage}\end{lrbox}\fbox{\usebox{\savepar}}}
\newenvironment{boxit2}
	{\begin{lrbox}{\savepar}\begin{minipage}[b]{2in}}
        {\end{minipage}\end{lrbox}\fbox{\usebox{\savepar}}}
\newenvironment{boxit2.5}
	{\begin{lrbox}{\savepar}\begin{minipage}[b]{2.5in}}
        {\end{minipage}\end{lrbox}\fbox{\usebox{\savepar}}}
\title{Normalizing Topologically Minimal Surfaces I: Global to Local Index}
\date{\today}
\author{David Bachman}
\begin{document}

\begin{abstract}
We show that in any triangulated 3-manifold, every index $n$ topologically minimal surface can be transformed to a surface which has local indices (as computed in each tetrahedron) that sum to at most $n$. This generalizes classical theorems of Kneser and Haken, and more recent theorems of Rubinstein and Stocking, and is the first step in a program to show that every topologically minimal surface has a normal form with respect to any triangulation.
\end{abstract}
\maketitle

\section{Introduction}

\markright{NORMALIZING TOPOLOGICALLY MINIMAL SURFACES I}

Minimal surfaces are classical objects of geometry, defined by the condition that their mean curvature is zero everywhere.  Equivalently, minimal surfaces represent critical values for surface area, among the space of all surfaces embedded in a given 3-manifold. The word {\it minimal} is a bit of a misnomer, since such critical values are not necessarily local minima. 

Recently, the author introduced {\it topologically minimal} surfaces \cite{TopIndexI}. Although their definition is quite different, these surfaces possess many of the same properties as the geometrically minimal surfaces described above. For example, it is well known that a geometrically minimal surface and an incompressible surface can be isotoped so that their intersection curves are essential on both. This has also been shown to be true of topologically minimal surfaces \cite{TopIndexI}. 

In recent work,  Colding and Minicozzi \cite{cm1}, \cite{cm2}, \cite{cm3}, \cite{cm4} have classified the possible local pictures of a geometrically minimal surface. For example, given a ball $B$ and a geometrically minimal surface $H$, Colding and Minicozzi showed that if $H \cap B$ is simply connected, then it is either the graph of a function, or a helicoid. In \cite{TopIndexI}, the author conjectured that every topologically minimal surface can be isotoped to be geometrically minimal. An immediate consequence would be that every topologically minimal surface can be isotoped so that its possible local pictures are described by the results of Colding and Minicozzi. 

This is the first in a series of papers where we study the way in which a topologically minimal surface $H$ can be isotoped with respect to the tetrahedra of a triangulation. Our goal is to show that $H$ can be isotoped so that it meets each tetrahedron in precisely the same way that a geometrically minimal surface can meet a ball, as described by Colding and Minicozzi. In such a position, for example, if $\Delta$ is a tetrahedron and $H \cap \Delta$ is simply connected, then we show that $H \cap \Delta$ is contained in either a plane or a helicoid. 

\subsection{Generalizing normal and almost normal surfaces.\\} 

The present work can be viewed from two different perspectives. As described above, it can be thought of as further strengthening the tie between geometrically and topologically minimal surfaces. Alternately, this work can be seen as a natural extension of results of Kneser, Haken, Rubinstein and Stocking on normal and almost normal surfaces.  Just as in the case of geometrically minimal surfaces, topologically minimal surfaces have a well defined {\it index}. Index 0 surfaces are precisely the incompressible surfaces of Haken \cite{haken:61}, and index 1 surfaces are the strongly irreducible surfaces of Casson and Gordon \cite{cg:87}. 

In 1929, Kneser \cite{kneser:29} showed that an essential sphere $S$ can be isotoped to a normal form with respect to an arbitrary triangulation. In this position, $S$ meets each tetrahedron $\Delta$ in the same way a plane might. In particular, $S \cap \Delta$ is a collection of triangles and quadrilaterals, called {\it normal disks}. In later work, Haken showed that incompressible (i.e.~index 0) surfaces can be isotoped into the same normal form \cite{haken:68}. In \cite{rubinstein:93} and \cite{stocking:96}, Rubinstein and Stocking independently showed that strongly irreducible (i.e.~index 1) surfaces can be isotoped to meet each tetrahedron of a given triangulation in a collection of normal disks, with at most one possible exception in one tetrahedron. This exceptional piece is either a saddle-shaped octagon, or two normal disks connected by an unknotted tube (similar to a {\it catenoid}, a classical geometrically minimal surface). 

In \cite{crit}, the author introduced {\it critical surfaces}, which are now recognized as the topologically minimal surfaces with index 2 \cite{TopIndexI}. In unpublished work, both the author \cite{nlizg} and Johnson \cite{johnson3} claimed that critical surfaces can be isotoped to meet the tetrahedra of a triangulation in a collection of planar pieces, with at most two exceptions. If there are exactly two exceptional pieces, then each is either a tube or an octagon, as described above. If there is a single exceptional piece then it may be a tube or an octagon as well, or something a bit more complicated. The possibilities are a helical shaped 12-gon, three normal disks connected by two unknotted tubes, an octagon tubed to a normal disk, or an octagon tubed to itself.

In this paper we give a local definition of the index of a surface. This is an index that is computed separately in each tetrahedron. It follows from this definition that the surfaces in a tetrahedron that have index 0 are the normal disks, and the surfaces with index 1 are tubes and octagons. In \cite{TopMinNormalIII} we show that the surfaces in a tetrahedron with index 2 are the other types of exceptional pieces mentioned above. Using this language, we can concisely summarize all of the results mentioned above as follows:

\begin{thm} \cite{kneser:29}, \cite{haken:68}, \cite{rubinstein:93}, \cite{stocking:96}, \cite{nlizg}, \cite{johnson3}
\label{t:classical}
A closed, topologically minimal surface $H$ whose index is $n \le 2$ in a triangulated 3-manifold can be isotoped so that the sum, taken over all tetrahedra $\Delta$, of the indices of $H \cap \Delta$ is at most $n$. 
\end{thm}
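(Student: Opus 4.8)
The plan is to prove the theorem by the obvious trichotomy $n\in\{0,1,2\}$, reducing each case to the corresponding classical normalization result and then bookkeeping local indices. The common structure in every case is: first isotope $H$ into the normal form supplied by the relevant cited theorem; next enumerate the pieces $H\cap\Delta$ over all tetrahedra $\Delta$; then apply the classification of pieces by local index (normal disks have index $0$ and tubes/octagons have index $1$, as noted above, while the remaining exceptional pieces have index $2$ by \cite{TopMinNormalIII}); and finally add. Throughout, the isotopies themselves are entirely borrowed — nothing new is proved about how to move $H$; only the numerology of local indices needs checking.

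\textbf{Cases $n=0$ and $n=1$.} If $n=0$ then $H$ is incompressible, so by Haken's extension \cite{haken:68} of Kneser \cite{kneser:29} it can be isotoped to be normal, meaning every $H\cap\Delta$ is a disjoint union of normal triangles and quadrilaterals; each such disk has local index $0$, so the sum of local indices is $0=n$. If $n=1$ then $H$ is strongly irreducible, so by Rubinstein \cite{rubinstein:93} and Stocking \cite{stocking:96} it can be isotoped so that $H\cap\Delta$ is a union of normal disks for every $\Delta$ except possibly one, in which there is a single additional piece that is either a saddle-shaped octagon or two normal disks joined by an unknotted tube. All normal-disk pieces contribute $0$, and the exceptional piece contributes exactly $1$ by the characterization of index-$1$ pieces recalled above; hence the sum is at most $1=n$.

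\textbf{Case $n=2$.} Here $H$ is critical, so by the (unpublished) work of the author \cite{nlizg} and of Johnson \cite{johnson3}, $H$ can be isotoped so that every $H\cap\Delta$ is a union of normal disks with at most two exceptions: either two exceptional pieces, each of which is an octagon or a tube, or a single exceptional piece which is a helical $12$-gon, three normal disks joined by two unknotted tubes, an octagon tubed to a normal disk, or an octagon tubed to itself. In the first subcase each exceptional piece has local index $1$, for a total of $2$; in the second subcase the single exceptional piece has local index $2$ by \cite{TopMinNormalIII}. In either subcase the sum of local indices is at most $2=n$, which completes the trichotomy and the proof.

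The main obstacle is not topological but is precisely the matching of the two languages: one must verify that the combinatorial list of exceptional pieces coming out of each classical normalization argument coincides exactly with the list of pieces of positive local index under the local definition of index introduced in this paper, and in particular that in the two-exceptional-piece subcase of $n=2$ each piece carries local index exactly $1$ (not more), so that the total is genuinely bounded by $2$. This reconciliation — local index $0$ and $1$ computed here, local index $2$ computed in \cite{TopMinNormalIII} — is the only real content; once it is in place the theorem is a formal consequence of the cited isotopy results.
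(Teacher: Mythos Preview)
Your proposal is correct and is exactly the case-by-case unpacking of the cited normalization theorems that the paper has in mind when it presents Theorem~\ref{t:classical} as a summary of prior work; the paper gives no separate proof at that point. (One minor omission: in the $n=2$ single-exceptional-piece subcase, that piece may also be a tube or octagon of local index $1$, not only one of the four index-$2$ types you list; this only helps the bound.)

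That said, the paper's own route to Theorem~\ref{t:classical} is genuinely different: it is recovered as the closed, $n\le 2$ special case of Theorem~\ref{t:main_paraphrase}, which is proved uniformly for all $n$ via the disk-complex flowchart (Theorems~\ref{t:CB^r(H)}, \ref{t:GammaBB(H)disconnectedToGammaBB(H1)disconnected}, \ref{t:CE^r(H,T^1)}, \ref{t:2skeleton}, Lemma~\ref{c:2skeleton}, and Theorem~\ref{t:IndexSum}). The remarks after Theorem~\ref{t:main1} explain why no actual compressions occur in this range: for closed $H$ the first loop is vacuous, and in the second loop the edge-compression collection $\DD$ must have $|\DD|$ even, hence $|\DD|=2$ when $n\le 2$, so $H\slash\DD$ is isotopic to $H$. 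Your approach borrows the isotopies wholesale from the literature and then checks that the resulting pieces have the right local indices; the paper's approach constructs the isotopy from scratch via homotopy-index arguments on disk complexes, never invoking Haken, Rubinstein--Stocking, or the unpublished $n=2$ work. Yours is quicker for $n\le 2$ but leans on unpublished results for the critical case; the paper's buys a proof uniform in $n$ and logically independent of the earlier normalization arguments.
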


The main result of this paper is a generalization of Theorem \ref{t:classical} to arbitrary values of the index $n$, and to surfaces with possibly non-empty boundary. In particular, we show:

\begin{thm}
\label{t:main_paraphrase}
A properly embedded, topologically minimal surface $H$ whose index is $n$ in a triangulated 3-manifold can be compressed, $\bdy$-compressed and isotoped to a surface $H'$ so that the sum, taken over all tetrahedra $\Delta$, of the indices of $H' \cap \Delta$ is at most $n$. 
\end{thm}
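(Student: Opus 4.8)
The plan is to translate the statement into one about the connectivity of the disk complex $\mathcal{D}(H)$, and then to establish that connectivity by comparing $\mathcal{D}(H)$ with a join of the \emph{local} disk complexes $\mathcal{D}(H\cap\Delta)$. Throughout, write $T^{(1)}$ and $T^{(2)}$ for the $1$- and $2$-skeleta of the triangulation, and understand $\mathcal{D}(\cdot)$ to include $\bdy$-compressing disks as well as compressing disks. \textbf{Step 1: reduction to efficient position.} Fix a well-founded complexity for surfaces meeting the triangulation --- for instance the triple $\bigl(-\chi(H),\,|H\cap T^{(1)}|,\,\#\{\text{components of }H\cap T^{(2)}\}\bigr)$ ordered lexicographically --- and a finite list of \emph{reducing moves}: isotopies of $H$ lowering its intersection with the skeleton, and compressions and $\bdy$-compressions of $H$ along disks cut off from ordinary (boundary-)compressing disks for $H$ by innermost circles or outermost arcs of $T^{(2)}$. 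Each reducing move strictly lowers the complexity, and none of them raises the index (a property we may take as known; see \cite{TopIndexI}). After finitely many moves $H$ becomes a surface $H'$ --- obtained by compressions, $\bdy$-compressions and isotopies, exactly as the theorem permits --- with $\text{ind}(H')\le n$ and with no reducing move available. For such an \emph{efficient} $H'$ an innermost-disk argument shows that $H'\cap T^{(2)}$ contains no trivial circles or arcs, and hence that every compressing or $\bdy$-compressing disk for $H'\cap\Delta$ that lies inside a single tetrahedron $\Delta$ is also a compressing or $\bdy$-compressing disk for $H'$. It therefore suffices to prove: if $H$ is in efficient position, then $\sum_\Delta \text{ind}(H\cap\Delta)\le\text{ind}(H)$.

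\textbf{Step 2: the join of local complexes.} Set $n_\Delta=\text{ind}(H\cap\Delta)$ and $m=\sum_\Delta n_\Delta$, and let $\Delta_1,\dots,\Delta_r$ be the tetrahedra with $n_{\Delta_i}\ge 1$; for these $\mathcal{D}(H\cap\Delta_i)$ is nonempty and, by definition of the local index, $(n_{\Delta_i}-2)$-connected. By Step 1 each vertex of $\mathcal{D}(H\cap\Delta_i)$ is a vertex of $\mathcal{D}(H)$, and disks taken from distinct $\Delta_i$ are disjoint, since they can meet only along $T^{(2)}$, where their boundary curves on $H$ lie in disjoint faces. So there is a simplicial map $\Phi\colon J\to\mathcal{D}(H)$, where $J=\mathcal{D}(H\cap\Delta_1)\ast\cdots\ast\mathcal{D}(H\cap\Delta_r)$ is the iterated join. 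By the standard connectivity estimate for joins, $J$ is $\bigl(\textstyle\sum_i(n_{\Delta_i}-2)+2(r-1)\bigr)$-connected, i.e.\ $(m-2)$-connected.

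\textbf{Step 3: connectivity transfer, and conclusion.} The heart of the proof is to show that every map $g\colon S^k\to\mathcal{D}(H)$ with $k\le m-2$ is null-homotopic. The image of $g$ is supported on finitely many (boundary-)compressing disks; quantify the failure of this family to be local by, say, its total number of intersections with $T^{(2)}$. If the family does not already lie in the tetrahedra, choose an innermost subdisk cut off from some member by its intersection with $T^{(2)}$, and use it either to homotope $g$, within $\mathcal{D}(H)$, to a family of strictly smaller non-locality --- recording for each disk the tetrahedron that now contains it --- or else to exhibit a reducing move on $H$. Efficiency excludes the latter, so the process terminates with $g$ homotopic to $\Phi\circ\tilde g$ for some $\tilde g\colon S^k\to J$. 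Since $J$ is $(m-2)$-connected and $k\le m-2$, $\tilde g$ is null-homotopic in $J$, hence $g$ is null-homotopic in $\mathcal{D}(H)$. Thus $\mathcal{D}(H)$ is $(m-2)$-connected, so by the definition of index $\text{ind}(H)\ge m=\sum_\Delta\text{ind}(H\cap\Delta)$. Together with Step 1 this proves the theorem; specializing to $n=0$ and $n=1$ recovers the normalization results of Haken and of Rubinstein--Stocking collected in Theorem~\ref{t:classical}.

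\textbf{The main obstacle} is the surgery argument of Step 3: performing, coherently over an entire $k$-parameter family of disks, the disk-exchange operations that Rubinstein and Stocking carry out on a single disk in the index-$1$ case, and verifying that the sole obstruction to pushing such a family into the tetrahedra is the availability of one of the complexity-reducing moves of Step 1. A secondary difficulty is to choose the complexity and the list of reducing moves so that efficient position is at once attainable and strong enough for Step 2, and --- when the local index is computed with a disk complex that also allows edge compressions --- to check that each such local move still corresponds to an honest compression or $\bdy$-compression of $H$, so that the map $\Phi$ is genuinely defined.
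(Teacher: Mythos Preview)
Your proposal has a genuine gap at Step~2, and it is fatal for the approach. You need a simplicial map $\Phi$ from the join $J$ of the local complexes $\mathcal{D}(H\cap\Delta_i)$ into the global complex $\mathcal{D}(H)$, and for this you assert that every local (boundary\nobreakdash-)compressing disk for $H\cap\Delta$ is a (boundary\nobreakdash-)compressing disk for $H$ in $M$. But the local index in this paper is $\ind\plex{\CC\EE(H\cap\Delta,\TT^2)}$, computed with \emph{edge}-compressing disks---disks meeting an edge of $\TT^1$ in an arc. Such a disk is never an element of $\CC(H)$ or $\BB^r(H)$, since its boundary is not contained in $H\cup\partial M$. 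If instead you use $\bdy$-compressions for $H\cap\Delta$ in $\Delta$, the arc on $\bdy\Delta$ lies in the interior of $M$ for interior tetrahedra, so again you do not land in the global complex. There is no map $\Phi$ in the direction you want, and the join connectivity of $J$ cannot be transferred to $\mathcal{D}(H)$ this way.

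The paper avoids this by running the comparison in the opposite direction. Rather than embedding local complexes into the global one, it passes through a chain of intermediate disk complexes
\[
\plex{\CC(H)}\to\plex{\CC\BB^r(H)}\to\plex{\CC\BB^r(H,\TT^1)}\to\plex{\CC\EE(H,\TT^1)}\to\plex{\CC\EE_{\TT^2}(H,\TT^1)}\to\plex{\CC\EE(H,\TT^2)},
\]
proving at each arrow that the homotopy index does not increase, possibly after an isotopy or after a specific family of $\bdy$- or edge-compressions whose size is charged against the index (Lemma~\ref{l:generalization}, Theorems~\ref{t:CB^r(H)} and~\ref{t:CE^r(H,T^1)}). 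Only the final complex $\plex{\CC\EE(H,\TT^2)}$ is literally the join of the local complexes, and the sum formula is then Theorem~\ref{t:IndexSum}. Your Step~3 is where all this work is hidden: pushing an entire $k$-parameter family of global disks to become local, coherently, is exactly what the ``compressing $n$-sequence'' machinery of Section~4 (Theorem~\ref{t:GammaBB(H)disconnectedToGammaBB(H1)disconnected}) is built to do for one of the transitions, and even there the target complex is $\plex{\CC\BB^r(H,\TT^1)}$, not a join of per-tetrahedron complexes. A global compressing disk whose boundary runs through many tetrahedra need not be homotopic in $\mathcal{D}(H)$ to anything supported in a single $\Delta$; innermost surgery on $D\cap\TT^2$ yields a subdisk of $D$ in one tetrahedron, but its boundary may be inessential on $H$, and discarding it does not localize what remains. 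Finally, your Step~1 claim that the reducing moves never raise index is not in \cite{TopIndexI} and is not true in general; the paper only ever obtains an index bound of the form $\ind\plex{\CC(H/\DD)}\le\ind\plex{\CC(H)}-|\DD|+1$ for very particular collections $\DD$ produced by Lemma~\ref{l:generalization}.
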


The compressions in the above theorem are very restricted, and only happen when the initial index $n$ is at least three. In particular, when $H$ is closed and $n \le 2$, then we recover Theorem \ref{t:classical}.

Theorem \ref{t:main_paraphrase} is the first step in producing a normal form for any index $n$ topologically minimal surface, with respect to an arbitrary triangulation. To complete the picture of such a normal form, one would have to classify all possible surfaces {\it in a single tetrahedron} whose index is $n$. In the first sequel to this paper \cite{TopMinNormalII} , we begin such a classification. We show there that if a topologically minimal surface (with index at least 1) in a tetrahedron is simply connected then it is a helicoid. Furthermore, the number of ``turns" of such a helicoid is directly proportional to its index. This, combined with Theorem \ref{t:main_paraphrase}, gives us the desired topological version of the Colding-Minicozzi result mentioned above: any topologically minimal surface in a triangulated 3-manifold can be transformed so that if the intersection with a given tetrahedron is simply connected, then it is either contained in a plane or a helicoid. 

In the second sequel to this paper \cite{TopMinNormalIII} we classify all index 2, non-simply connected surfaces in a tetrahedron. This result, combined with Theorem \ref{t:main_paraphrase} and the results from \cite{TopMinNormalII}, gives a normal form for all index 2 topologically minimal surfaces in a 3-manifold, with respect to an arbitrary triangulation. Having such a normal form has several known and potential applications. For example, in \cite{TopMinNormalIII} we combine this normalization result with Corollary 11.2 of \cite{Stabilizing} to show that there exists a single 3-manifold with infinitely many Heegaard splittings with Hempel distance (see \cite{hempel:01}) exactly one, a fact that was not previously known. In joint work with Ryan Derby-Talbot and Eric Sedgwick, we use the normalization of index 2 surfaces to show that the set of Heegaard surfaces of 3-manifolds with toroidal boundary does not change after ``generic" Dehn filling \cite{HeegaardDehn}.  A potential application is an algorithm to determine if two arbitrary Heegaard surfaces are isotopic, and if they are not, the number of stabilizations required to make them isotopic. A second potential application is an algorithm to determine if a given Heegaard surface is strongly irreducible. 

The author thanks Ryan Derby-Talbot and Eric Sedgwick for countless helpful conversations about the present work. Much of the difficulty in working in this area is in finding good notation for the myriad of types of disks and complexes that arise. Most of the notation used here was developed jointly with Derby-Talbot and Sedgwick for the paper \cite{Index1Normal}, where we present a new proof of the normalization of index 1 surfaces. Many similar ideas are used in both that proof and the present paper.

\section{Compressing Disks and The Disk Complex}

Throughout this paper $H$ will represent a connected, orientable surface that is properly embedded in a compact, orientable, irreducible 3-manifold $M$ with incompressible boundary and triangulation $\TT$. Furthermore, we will assume $H$ separates $M$ and is not contained in a ball. In this section we give basic definitions and facts regarding compressing disks and associated disk complexes.  

\begin{dfn}
An embedded loop or arc $\alpha$ on $H$ is said to be {\it inessential} if it cuts off a subdisk of $H$, and {\it essential} otherwise.
\end{dfn}

We define three types of compressing disks, extending the standard definitions to account for the presence of a given complex $\KK$.   We are concerned here with the cases when $\KK = \emptyset$, $\TT^1$, or $\TT^2$, the $1$- and $2$-skeleton of our triangulation $\TT$. However, the case when $\KK$ is a knot or link in a 3-manifold may be of interest to those who study ``thin position" and related 3-manifold techniques. In what follows, $N(\KK)$ refers to a regular neighborhood of $\KK$ in $M$.

\begin{dfn}
A {\it compressing disk} $C$ for $(H,\KK)$ is a disk embedded in $M-N(\KK)$ so that $\alpha = C \cap H=\bdy C$ is an essential loop in $H-N(\KK)$.

Let $\CC(H,\KK)$ denote the set of  compressing disks for $(H,\KK)$.
\end{dfn}

\begin{dfn}
A {\it $\bdy$-compressing disk}  $B$ for $(H,\KK)$ is a disk embedded in $M-N(\KK)$ so that $\bdy B=\alpha \cup \beta$, where $\alpha=B \cap H$ is an essential arc on $H-N(\KK)$ and $B \cap \bdy M=\beta$. A $\bdy$-compressing disk is {\it fake} if the arc $\beta$ is parallel into $\bdy H$ on $\bdy M-N(\KK)$ and {\it real} otherwise. 

Let $\BB(H,\KK)$ denote the set of $\bdy$-compressing disks for $(H,\KK)$, and $\BB^r (H,\KK)$ the real $\bdy$-compressing disks. 
\end{dfn}

\begin{dfn} 
When $\KK=\TT^1$ or $\TT^2$, we define an {\it edge-compressing disk} $E$ for $(H,\KK)$ to be a disk embedded in $M$ so that $\bdy E =\alpha \cup \beta$, where $\alpha= E  \cap H$ is an arc on $H$ and $\beta = E \cap \KK \subset e$, for some edge $e$ of $\TT^1$. We say $E$ is a {\it boundary} edge-compressing disk when the edge of $T^1$ that it is incident to is contained in $\bdy M$, and is an {\it interior} edge-compressing disk otherwise. 

Let $\EE(H,\KK)$ denote the set of edge compressing disks for $(H,\KK)$.
\end{dfn}

Notice that each element of $\EE(H,\TT^1)$ meets $M-N(\TT^1)$ in a real $\bdy$-compressing disk for $(H-N(\TT^1),\emptyset)$. Hence, $\EE(H,\TT^1)$ can be identified with a subset of $\BB^r(H-N(\TT^1), \emptyset)$. This fact will become important in Section \ref{s:Stage3}. 

\begin{dfn}
Henceforth, we will employ the following shortened notation:
	\begin{itemize}
		\item $\CC(H)=\CC(H,\emptyset)$
		\item $\BB^r(H)=\BB^r(H,\emptyset)$
		\item $\CC \BB^r(H,\KK)=\CC(H,\KK) \cup \BB^r(H,\KK)$
		\item $\CC\EE(H,\KK)=\CC(H,\KK) \cup \EE(H,\KK)$
		\item $\DD(H,\KK)=$ any one of $\CC(H,\KK)$, $\BB^r(H,\KK)$, $\EE(H,\KK)$, $\CC \BB^r(H,\KK)$ or $\CC\EE(H,\KK)$. 
	\end{itemize}
\end{dfn}

\begin{dfn}
\label{d:Compression}
Suppose $D \in \DD(H,\KK)$.  We construct a surface $H/D$, which is said to have been obtained by {\it surgering}  along $D$, as follows. Let $M(H,\KK)$ denote the manifold obtained from $M-N(\KK)$ by cutting open along $H$. Let $N(D)$ denote a neighborhood of $D$ in $M(H,\KK)$. Construct the surface $H'$ in $M-N(\KK)$ by removing $N(D) \cap H$ from $H$ and replacing it with the frontier of $N(D)$ in $M(H,\KK) $. The surface $H/D$ is then obtained from $H'$ by discarding any component that lies in a ball in $M$. See, for example, Figure \ref{f:HcompE}.
\end{dfn}

\begin{figure}
\psfrag{D}{$E$}
\psfrag{H}{$H$}
\psfrag{T}{$\TT^1$}
\psfrag{t}{$N(\TT^1)$}
\psfrag{h}{$H/E$}
\[\includegraphics[width=4in]{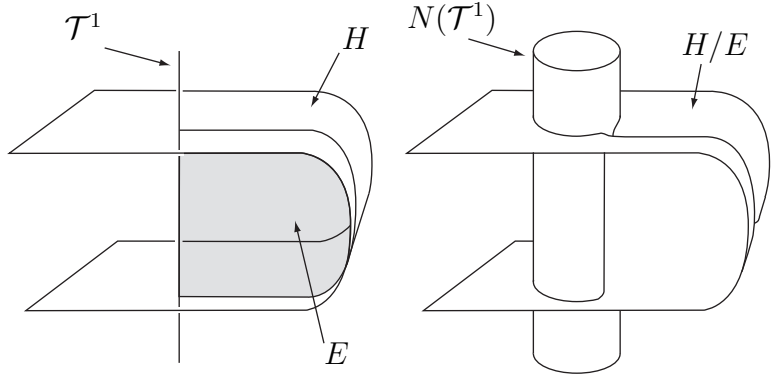}\]
\caption{Surgering along an edge-compressing disk $E \in \EE(H,\TT^1)$ produces a surface in $M-N(\TT^1)$.}
\label{f:HcompE}
\end{figure}

\begin{dfn}
We say $H$ is {\it compressible} if $\CC(H)$ is non-empty, and {\it incompressible} otherwise.  If $D$ is a compressing disk for $H$ then $H/D$ is said to have been obtained from $H$ by {\it compressing along $D$}.
\end{dfn}

\begin{dfn}
\label{d:H/tau}
Suppose $\DD$ is a collection of elements of $\DD(H,\KK)$ whose intersections with $M-N(\KK)$ are pairwise disjoint. Then we form the surface $H/\DD$ by simultaneously surgering along each. 
\end{dfn}

\begin{lem}
The previous definition is well defined, up to isotopy. 
\end{lem}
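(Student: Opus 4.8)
The plan is to reduce the statement to two standard facts: uniqueness of regular neighborhoods, and the observation that surgeries performed along disjoint disks do not interfere with one another. The only genuine ambiguities in Definition~\ref{d:H/tau} (and already in Definition~\ref{d:Compression}, which is the case $|\DD|=1$) are the choice of the regular neighborhood $N(\KK)$, the choices of the neighborhoods $N(D)$ of the individual disks, and the meaning of the word \emph{simultaneously} --- that is, whether performing the surgeries in different orders could matter. I would dispose of these in turn.

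First, fix the choice of $N(\KK)$: any two regular neighborhoods of $\KK$ in $M$ are ambient isotopic, and an ambient isotopy carrying one to the other carries one instance of the whole construction to the other, so it is enough to treat a single fixed $N(\KK)$. Now work in $M(H,\KK)$, the manifold obtained from $M-N(\KK)$ by cutting along $H$. If $D\in\DD$ is a compressing disk or a real $\bdy$-compressing disk, then $D$ already lies in $M-N(\KK)$; if $D=E$ is an edge-compressing disk, replace it by $E\cap(M-N(\KK))$, which by the remark following the definition of edge-compressing disks is a real $\bdy$-compressing disk for $(H-N(\KK),\emptyset)$. Thus from $\DD$ we obtain a collection of properly embedded disks $D_1,\dots,D_k$ in $M(H,\KK)$ which, by hypothesis, are pairwise disjoint, each meeting the copies of $H$ in its prescribed arc or loop. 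This uniform reformulation is what lets a single argument cover all five possibilities for $\DD(H,\KK)$ at once.

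Next, choose pairwise disjoint regular neighborhoods $N(D_1),\dots,N(D_k)$ in $M(H,\KK)$; this is possible because the $D_i$ are compact and disjoint. The surface before discarding is then obtained from $H$ by deleting $\bigcup_i\bigl(N(D_i)\cap H\bigr)$ and inserting the frontier of each $N(D_i)$ in $M(H,\KK)$. Since this description refers only to the union of the neighborhoods, it is patently independent of any ordering of the surgeries; and since any two systems of pairwise disjoint regular neighborhoods of $\bigsqcup_i D_i$ differ by an ambient isotopy of $M(H,\KK)$ supported near $\bigcup_i D_i$ (uniqueness of regular neighborhoods applied to the disjoint union $\bigsqcup_i D_i$), the isotopy class of the resulting surface is independent of all the choices. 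Equivalently, it coincides with the surface obtained by surgering along $D_1$, then $D_2$, and so on in any order, because each successive $D_i$ is disjoint from the neighborhoods already used and so survives, unchanged, as a disk in the intermediate cut-open manifold. Finally, $H/\DD$ is obtained by discarding from this surface exactly those components that lie in a ball of $M$; whether a component does so is a property of its isotopy class in $M$, so $H/\DD$ is well defined up to isotopy.

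The content is entirely formal, so I do not anticipate a real obstacle. The one place that warrants care is the edge-compressing case: one must check that after deleting $N(\KK)$ and cutting along $H$ these disks genuinely become honest, pairwise disjoint $\bdy$-compressing disks, so that the uniqueness-of-neighborhoods argument applies uniformly across the five definitions of $\DD(H,\KK)$. It is also worth noting explicitly that ``discard the components lying in balls'' is to be applied once, to the final surgered surface, and not after each individual surgery, since otherwise the two procedures could conceivably differ.
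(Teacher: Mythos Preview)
Your argument is correct as far as it goes, but it addresses a different---and easier---ambiguity than the one the paper is concerned with.  You treat $\DD$ as a \emph{fixed} collection of pairwise disjoint embedded disks and verify that the outcome of surgery is independent of the choice of regular neighborhoods and of the order of the operations.  That is fine, and your appeal to uniqueness of regular neighborhoods disposes of it cleanly.

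The paper, however, has in mind the stronger assertion that $H/\DD$ depends only on the \emph{isotopy classes} of the disks in $\DD$ (subject to their admitting pairwise disjoint representatives).  This is what is actually needed to make the subsequent definition of $H/\tau$ for a simplex $\tau$ of the disk complex well defined, since there one is handed only equivalence classes and must \emph{choose} disjoint representatives.  The nontrivial case is when two disks $D,E\in\DD$ meet $H$ in parallel curves (or arcs): there are then two combinatorially distinct disjoint arrangements of $D$ and $E$, and surgering along the two arrangements yields surfaces $H_1,H_2$ that differ by a sphere (or, for $\bdy$-compressions, a disk with boundary on $\bdy M$).  The paper invokes the irreducibility of $M$ and the incompressibility of $\bdy M$---standing hypotheses of the paper that your argument never uses---to conclude that this extra component lies in a ball and is therefore discarded, so $H_1$ and $H_2$ yield the same $H/\DD$.

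A quick diagnostic: any proof that does not use irreducibility and incompressibility of $\bdy M$ cannot be establishing what the paper intends, since without those hypotheses the two arrangements can genuinely produce non-isotopic surfaces after discarding ball components.  Your proof and the paper's are complementary rather than competing; a complete treatment would include both the regular-neighborhood remark you give and the parallel-boundary analysis the paper gives.
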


\begin{proof}
If $\DD$ meets $H$ in a collection of pairwise non-parallel curves, then this collection is unique, and thus $H/\DD$ is unique up to isotopy. Suppose, then, that two disks, $D$ and $E$, in $\DD$ meet $H$ in parallel curves. Then $D$ and $E$ can potentially be  isotoped to meet $H$ in two different arrangements. Surgery of $H$ along each such arrangement produces surfaces $H_1$ and $H_2$ that differ only by a sphere. By the assumed irreducibility of $M$, this sphere bounds a ball. To form $H/DE$ we remove this sphere from either $H_1$ or $H_2$. In either case we get the same surface (up to isotopy), and thus the operation is well-defined.

The only other case that needs to be considered is when $D$ and $E$ are both $\bdy$-compressions. In this case the surfaces $H_1$ and $H_2$ described above differ by a disk with boundary on $\bdy M$. By the assumed incompressibility of $\bdy M$, and the irreducibility of $M$, this disk lies in a ball in $M$. It is thus removed from either $H_1$ or $H_2$ to form $H/DE$. in either case, the surface thus obtained is the same, up to isotopy. 
\end{proof}

Suppose now $H$ is a properly embedded surface in $M$, transverse to $\TT^1$, and  $\EE$ is a collection of interior edge-compressing disks for $(H,\TT^1)$. Then the surface $H/\EE$ is defined only to be a surface in $M-N(\TT^1)$. We now give a related operation that produces a surface $H \slash \EE$ which is properly embedded in $M$ and transverse to $T^1$. The surfaces $H$ and $H\slash \EE$ may be isotopic in $M$, as in Figure \ref{f:surgery}, or $H \slash \EE$ may be obtained from $H$ by surgery along some number of compressing disks, as in Figure \ref{f:surgery2}.

\begin{figure}
\psfrag{H}{$H$}
\psfrag{D}{$H \slash \DD$}
\[\includegraphics[width=4 in]{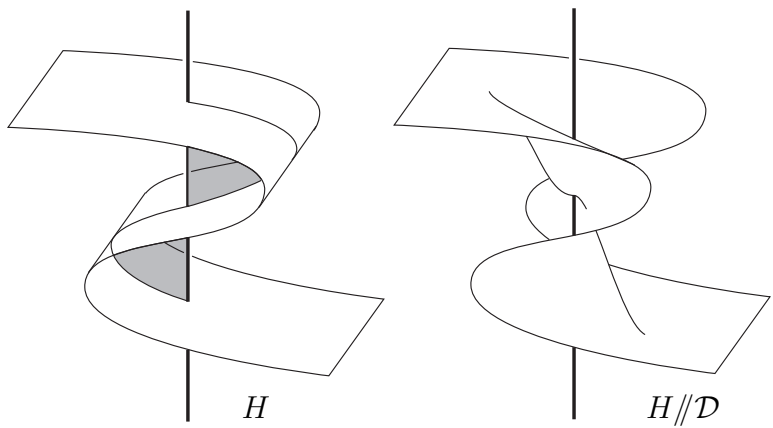}\]
\caption{The collection $\EE$ consists of two disks that meet at a point of $\TT^1$, but are disjoint in $M-N(\TT^1)$. The surfaces $H$ and $H \slash \EE$ are isotopic in $M$.}
\label{f:surgery}
\end{figure}

\begin{figure}
\psfrag{H}{$H$}
\psfrag{D}{$H \slash \DD$}
\[\includegraphics[width=4 in]{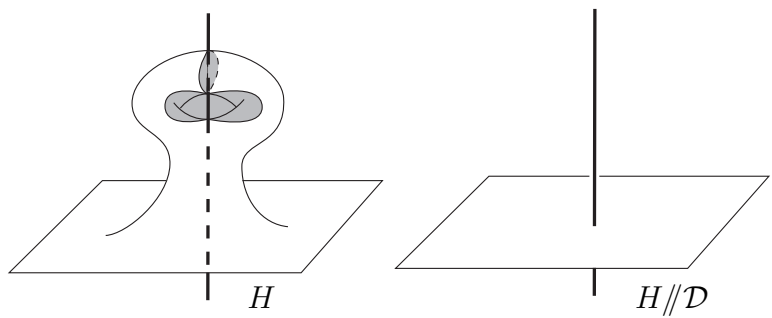}\]
\caption{The collection $\EE$ consists of four disks that all meet at a point of $\TT^1$. The surface $H \slash \EE$ can be obtained from $H$ by surgery along a single compressing disk.}
\label{f:surgery2}
\end{figure}

\begin{dfn}
Suppose $\EE$ is a collection of interior edge-compressing disks for $(H,\TT^1)$ which are pairwise disjoint in the complement of $N(\TT^1)$. If, for each interior edge $e$ of $\TT^1$, the surface $H/\EE$ meets $\bdy N(e)$ in essential loops, then we cap off all such loops with disks in $N(\TT^1)$ to form a properly embedded surface $H \slash \EE$ in $M$ that is transverse to $\TT^1$.
\end{dfn}

We define an equivalence relation on disks as follows:

\begin{dfn}
Disks $C,C' \in \DD(H,\KK)$ are {\it equivalent}, $C\sim C'$, if $C$ and $C'$ are isotopic in $M-N(\KK)$, through disks in $\DD(H,\KK)$. 
\end{dfn}

\begin{dfn}
\label{d:DiskComplex}
The {\it disk complex} $\plex{\DD(H,\KK)}$ is the complex defined as follows: vertices correspond to equivalence classes $[D]$ of the disk set $\DD(H,\KK)$. A collection of $n$ vertices spans an $(n-1)$-simplex if there are representatives of each whose intersections with $M-N(\KK)$ are pairwise disjoint.
\end{dfn}

\begin{dfn}
\label{d:H/tau}
Suppose $\tau$ is a simplex of $\plex{\DD(H,\KK)}$. Then $H/\tau=H/\DD$, where $\DD$ is a pairwise disjoint collection of representatives of the vertices of $\tau$.
\end{dfn}

\begin{dfn}
\label{d:Indexn}
The {\it homotopy index} of a complex $\plex{\DD(H,\KK)}$, denoted $\ind\plex{\DD(H,\KK)}$, is defined to be 0 if $\plex{\DD(H,\KK)}=\emptyset$, and the smallest $n$ such that $\pi_{n-1}(\plex{\DD(H,\KK)})$ is non-trivial, otherwise. (If a complex is contractible, its homotopy index is left undefined.)
\end{dfn}

\begin{dfn}
\label{d:TopMin}
\cite{TopIndexI} A surface $H$ in a 3-manifold is {\it topologically minimal} if $\plex{\CC(H)}$ is either empty or non-contractible. When $H$ is topologically minimal, we say the {\it index of $H$} is the number $\ind\plex{\CC(H)}$.
\end{dfn}

We now come to our main theorem, which guarantees a process by which we can begin with an index $n$, topologically minimal surface $H_0$, and end up with a surface $H$ where $\ind\plex{\CC \EE(H,\TT^2)} \le n$.

\begin{thm}
\label{t:main1}
Suppose $H_0$ is a topologically minimal, non-peripheral surface in $M$ whose index is $n$. Then there is a surface $H$, obtained from $H_0$ by the sequence of $\bdy$-compressions, interior edge-compressions, and isotopies given in the flow chart of Figure \ref{f:flowchart}, such that \[\ind\plex{\CC \EE(H,\TT^2)} \le n.\]
\end{thm}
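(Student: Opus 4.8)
The plan is to run the normalization procedure of Figure~\ref{f:flowchart} in stages, each of which simplifies the way $H$ sits with respect to $\TT^2$ while maintaining, as an invariant, a bound of $n$ on the homotopy index of whichever disk complex is relevant at that stage. Throughout, $H$ is kept transverse to $\TT^1$ (hence, after a small isotopy, to $\TT^2$), and the complexity of $H \cap \TT^2$ is measured by a well-ordered quantity --- essentially the number and combinatorial type of the curves of $H \cap \TT^2$ in the faces --- so that the procedure terminates. The base case $n=0$ should reduce to Haken's theorem, and the closed case $n \le 2$ should recover Theorem~\ref{t:classical}; since disks in distinct tetrahedra are automatically disjoint, $\plex{\CC \EE(H,\TT^2)}$ is a join of per-tetrahedron complexes once $H$ is in final position, so the bound on its index is exactly the ``sum of local indices $\le n$'' of Theorem~\ref{t:main_paraphrase}.

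First I would dispose of the intersections of $H$ with $\TT^2$ that can be removed without changing $H$ up to isotopy, via the classical innermost-disk and outermost-arc arguments of Kneser and Haken; this kills the ``trivial'' edge- and $\bdy$-compressing data coming from bigons against $\TT^2$ and $\bdy M$. Next I would attack the interior edges of $\TT^1$: so long as there is an interior edge-compressing disk for $(H,\TT^2)$, take a maximal disjoint family $\EE$ of such disks along one edge and replace $H$ by $H \slash \EE$. This is the source of the restricted compressions of Theorem~\ref{t:main_paraphrase}: the capping that defines $H \slash \EE$ is either an isotopy or, as in Figure~\ref{f:surgery2}, a genuine compression of $H$, and one checks such a compression can only be forced once $n \ge 3$. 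Finally I would pass to the cut-open manifold $M - N(\TT^1)$: by the remark following the definition of an edge-compressing disk, $\EE(H,\TT^1)$ is identified with a subset of $\BB^r(H - N(\TT^1),\emptyset)$, so the remaining edge-compressions along $\TT^1$ --- together with the compressing disks lying in individual tetrahedra and the real $\bdy$-compressions toward $\bdy M$ --- all become ordinary (boundary-)compressions of $H - N(\TT^1)$ in $M - N(\TT^1)$, which I would remove one at a time. This is the content of Section~\ref{s:Stage3}, and it is what lets $\plex{\CC \EE(H,\TT^2)}$ be compared, in final position, with $\plex{\CC \BB^r(H - N(\TT^1),\emptyset)}$.

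The technical heart is the index bookkeeping, which I would organize into a sequence of monotonicity lemmas of the form: if $H'$ is obtained from $H$ by one of the moves of the flow chart along a disk $D$ (a $\bdy$-compression, an interior edge-compression, or the capping $H \mapsto H \slash \EE$; isotopies contribute nothing), then the index of the relevant disk complex of $H'$ is at most that of $H$. For the genuine surgeries one argues combinatorially: surgering $H$ along $D$ induces a comparison between the disk complexes of $H$ and $H'$ --- for instance, the disks of $H'$ disjoint from the trace of $D$ form a subcomplex of the link of $[D]$ in $\plex{\CC(H)}$ --- and one shows that, provided $D$ is chosen disjoint from a finite family of disks carrying a nontrivial class in $\pi_{k}$ for some $k \le n-1$ (such a family exists precisely because $\plex{\CC(H_0)}$ fails to be $(n-1)$-connected), this comparison preserves the failure of $(n-1)$-connectivity, hence the bound. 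I would lean repeatedly on the collapsing principle that a vertex whose link is contractible may be deleted without changing homotopy type, in order to identify these links with disk complexes of surgered surfaces.

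When no move of Figure~\ref{f:flowchart} applies, $H$ should meet $\TT^2$ cleanly enough that every compressing disk for $H$ in $M$ can itself be normalized with respect to $\TT^2$ by an innermost/outermost analysis of its intersection with $\TT^2$; this should exhibit inside $\plex{\CC \EE(H,\TT^2)}$ a subcomplex onto which it deformation retracts and which, via the Stage~3 identification, is comparable with the complex tracked by the invariant, yielding $\ind \plex{\CC \EE(H,\TT^2)} \le n$. I expect the main obstacle to be exactly this homotopy-theoretic analysis of the disk complexes under the moves: making the terminal comparison precise, and --- the genuinely new point beyond the index $\le 2$ cases --- verifying that the compressions forced by the capping operation when $n \ge 3$ cannot raise the index above $n$.
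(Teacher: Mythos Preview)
Your proposal has the right overall shape---run the flow chart in stages and maintain an index bound---but it is missing the two ideas that actually do the work in the paper, and the substitute you offer would not fill the gap.

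First, your ``monotonicity lemmas'' are not what is happening. It is not true that an arbitrary $\bdy$-compression or edge-compression along a disk $D$ chosen to miss some fixed $\pi_k$-class preserves the bound; there is no mechanism guaranteeing such a $D$ exists, and the disk complex you are tracking changes type at each stage. What the paper uses instead is a dichotomy (Lemma~\ref{l:generalization}): given an inclusion $X\subset Y$ of disk complexes with $\ind X=n$, \emph{either} $\ind Y\le n$ (and you proceed to the next stage), \emph{or} there is a specific simplex $\tau\subset Y\setminus X$ whose ``link in $X$'' has index $\le n-\dim\tau$, and that link is identified with the relevant complex of the surgered surface. The surgeries in the flow chart are not arbitrary moves whose effect you then control; they are the simplices $\tau$ handed to you by this lemma in the bad branch, and the index \emph{strictly drops} by $\dim\tau$, which is what gives termination of each loop. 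Your ``collapsing a vertex with contractible link'' principle does not produce this dichotomy.

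Second, and more seriously, you have no analog of Theorem~\ref{t:GammaBB(H)disconnectedToGammaBB(H1)disconnected}, the step the paper calls its ``main technical heart.'' This is the passage from $\plex{\CC\BB^r(H)}$ to $\plex{\CC\BB^r(H,\TT^1)}$, and it is an \emph{isotopy}, not a surgery; it cannot be handled by your innermost/outermost cleanup or by any link-of-a-vertex comparison. The paper builds a parameterized family of surfaces (a ``compressing $n$-sequence'') over an $n$-ball bounding a nontrivial $(n-1)$-sphere in $\plex{\CC\BB^r(H)}$, defines a width on the surfaces in the family, and locates a \emph{plateau}---a maximal-width region whose link maps into $\plex{\CC\BB^r(H_v,\TT^1)}$; minimality of the family forces this map to be homotopically nontrivial, giving the bound on $\ind\plex{\CC\BB^r(H_v,\TT^1)}$ for some $H_v$ isotopic to $H$. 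Your plan (clean up $\TT^2$, then surger along edges, then pass to $M-N(\TT^1)$) never confronts this step, and your terminal ``normalize compressing disks with respect to $\TT^2$'' argument cannot substitute for it: the difficulty is precisely that an arbitrary position of $H$ relative to $\TT^1$ gives no control on $\plex{\CC\BB^r(H,\TT^1)}$, and the parameterized argument is what selects a good position.
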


\begin{figure}
\begin{center}
\psfrag{A}{\begin{boxit2}\tiny Begin with $H_0$ s.t. $\ind\plex{\CC(H_0)}$ is well-defined. Initially define $H=H_0$.\end{boxit2}}
\psfrag{B}{\tiny Is $\ind\plex{\CC \BB^r(H)} \le \ind\plex{\CC(H)}$?}
\psfrag{Y}{\small Yes}
\psfrag{N}{\small No}
\psfrag{C}{\begin{boxit2.5} \tiny Theorem \ref{t:CB^r(H)}: $\exists$ (real) $\bdy$-compressions $\DD$ s.t.\\ $\ind\plex{\CC(H/\DD)} \le \ind\plex{\CC(H)}-|\DD|+1$.\end{boxit2.5}}
\psfrag{D}{\small Replace $H$ with $H/\DD$.}
\psfrag{E}{\begin{boxit2.5} \tiny Theorem \ref{t:GammaBB(H)disconnectedToGammaBB(H1)disconnected}: Can isotope $H$ so that \[\ind\plex{\CC \BB^r(H,\TT^1)} \le \ind\plex{\CC \BB^r(H)}.\]\end{boxit2.5}}
\psfrag{H}{\tiny Is $\ind\plex{\CC \EE(H,\TT^1)} \le \ind\plex{\CC \BB^r(H,\TT^1)}$?}
\psfrag{I}{\begin{boxit2.5} \tiny Theorem \ref{t:CE^r(H,T^1)}: $\exists$ interior edge-compressions $\DD$ s.t.\\ $\ind\plex{\CC\BB^r(H\ / \hspace{-.09 in} /\ \DD, \TT^1)} \le \ind\plex{\CC \BB^r(H,\TT^1)}-|\DD|+1$.\end{boxit2.5}}
\psfrag{F}{\small Replace $H$ with $H \slash \DD$.}
\psfrag{J}{\begin{boxit3} \tiny Theorem \ref{t:2skeleton} and Lemma \ref{c:2skeleton}: Can isotope $H$ so that \[\ind\plex{\CC \EE(H,\TT^2)} \le \ind\plex{\CC \EE(H,\TT^1)}.\]\end{boxit3}}
\includegraphics[width=4.5 in]{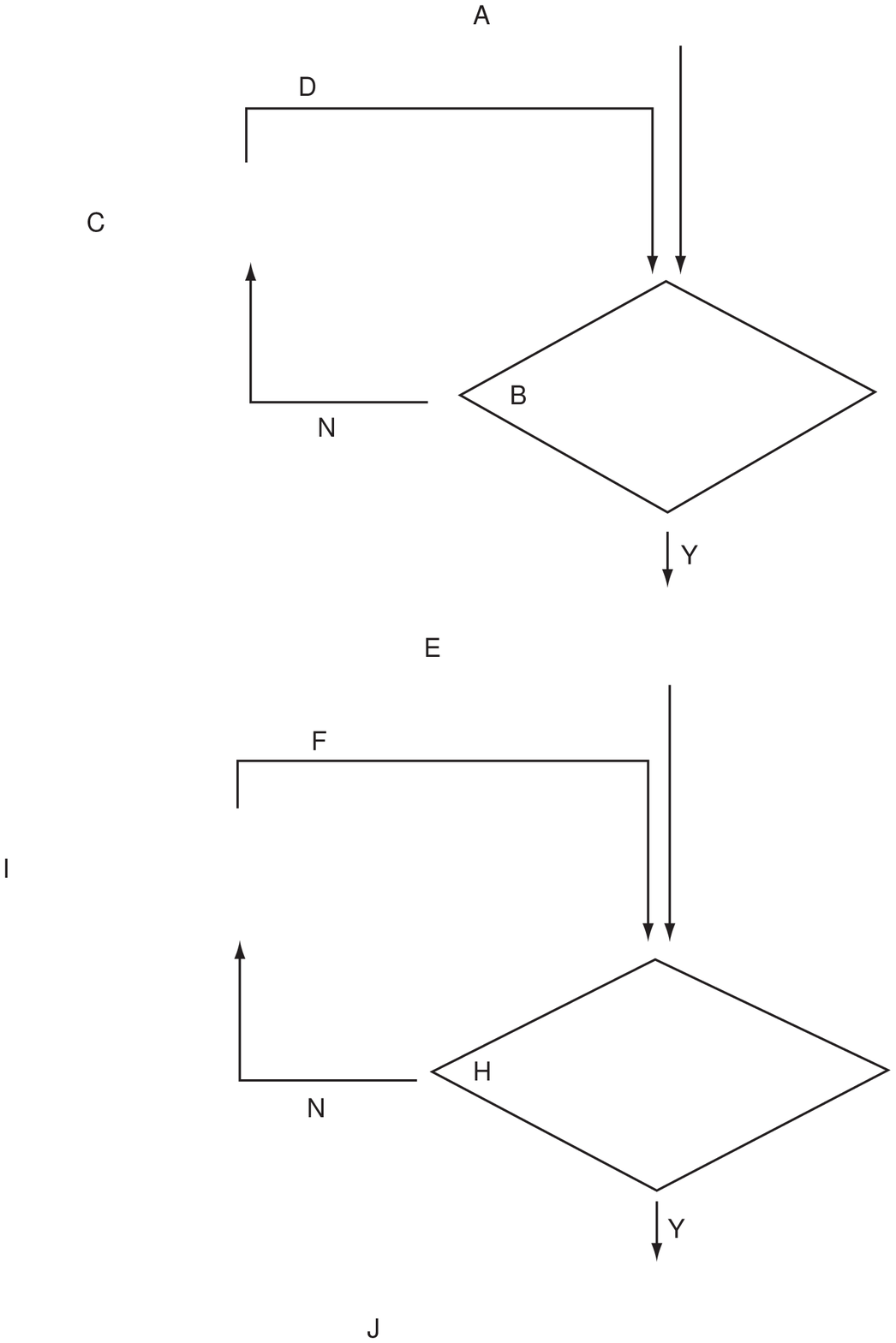}
\caption{Transitioning from $\plex{\CC(H_0)}$ to $\plex{\CC \EE(H,\TT^2)}$ in Theorem \ref{t:main1}.}
\label{f:flowchart}
\end{center}
\end{figure}

The proof of Theorem \ref{t:main_paraphrase} is then complete by applying the following theorem.

\bigskip

\noindent {\bf Theorem \ref{t:IndexSum}.} {\it Suppose $\ind\plex{\CC \EE(H,\TT^2)} = n$. Then 
\[\sum \limits _{\Delta \in \TT^3} \ind\plex{\CC \EE(H \cap \Delta,\TT^2)} = n.\]}

\bigskip

Some remarks are in order about the flow chart of Figure \ref{f:flowchart}. 

\begin{rmk}
The first loop of the flow chart concerns the set of $\bdy$-compressions, and thus only applies to surfaces with non-empty boundary. Theorem~\ref{t:CB^r(H)} in this loop always assumes the surface $H$ that it is given has essential boundary on $M$, and produces a surface $H/\DD$ that has essential boundary. When the genus of $\bdy M \ge 2$, this allows us to bound the distance (as measured in the curve complex) between the boundary curves of the initial surface $H_0$ and the boundary curves of the surface that is eventually passed on to Theorem~\ref{t:GammaBB(H)disconnectedToGammaBB(H1)disconnected} in terms of the Euler characteristic of $H_0$. 
\end{rmk}

\begin{rmk}
When $\bdy M \cong \mathbb T^2$ the size of the collection $\DD$ given by Theorem~\ref{t:CB^r(H)} must be even. This is because each loop of both $\partial H$ and $\bdy (H/\DD)$ must be essential on $\bdy M$, and thus the Euler characteristics of the components of $\bdy M -H$ and $\bdy M-(H/\DD)$ are all zero. Since surgery along a $\bdy$-compressing disk on one side of $H$ decreases the Euler characteristic of the components of $\bdy M-H$ on that side by precisely one and increases the Euler characteristic on the other side by one, there must be the same number of disks in $\DD$ on each side of $H$. Hence, $|\DD|$ is even. 
	
In this case one would measure the distance between the curves $\bdy H$ and $\bdy (H/\DD)$ in the Farey graph. Since $|\DD|$ is even it must be at least 2, and thus it follows from Theorem~\ref{t:CB^r(H)} that the distance between the boundary curves of the initial surface $H_0$ and the boundary curves of the surface that is eventually passed on to Theorem~\ref{t:GammaBB(H)disconnectedToGammaBB(H1)disconnected} is bounded by $\ind\plex{\CC(H_0)}$. In particular, when $\ind\plex{\CC(H_0)}=2$, then these two sets of curves are at most distance 2 apart in the Farey graph. We make extensive use of this fact in \cite{HeegaardDehn}. 
\end{rmk}

\begin{rmk}
In the second loop of the flow-chart, the collection $\DD$ produced by Theorem~\ref{t:CE^r(H,T^1)} has the property that $H/\DD$ meets the neighborhood of each interior edge of $\TT^1$ in an essential loop, and thus $H \slash \DD$ is well-defined. As in the previous remark, this implies that $|\DD|$ is even. When $|\DD|=2$ the surface $H \slash \DD$ is by construction isotopic to $H$ in $M$. In particular, if the original surface $H_0$ is closed, and $\ind\plex{\CC(H_0)} \le 2$,  then the final surface $H$ produced by the flow chart will be isotopic to $H_0$. This is why Theorem \ref{t:classical} is a special case of Theorem \ref{t:main_paraphrase}.
\end{rmk}

\begin{rmk}
The proofs of all but one of the results indicated in the flow chart have appeared with different notation in previous work. In each case below we both indicate the original source, and reproduce the proof, adapted to the notation here. The one exception is Theorem \ref{t:GammaBB(H)disconnectedToGammaBB(H1)disconnected}, which is completely new. We consider this result to be the main technical heart of this paper. It plays a particularly important role here, as it links the two loops of the flow chart.
\end{rmk}

\section{Stage 1: $\plex{\CC(H)} \rightarrow \plex{\CC \BB^r(H)}$}

\begin{lem}
\label{l:generalization} {\rm [cf. \cite{Stabilizing}, Theorem 4.5.]}
Let $X$ be a subcomplex of a simplicial complex $Y$. Suppose the homotopy index of $X$ is $n$. Then either the homotopy index of $Y$ is at most $n$, or there is a simplex $\tau \subset  Y \setminus X$ such that the homotopy index of the subcomplex spanned by
\[\{x \in X|\forall y \in \tau, \ x\mbox{ is adjacent to }y\}\] 
is at most $n-{\rm dim}(\tau)$.
\end{lem}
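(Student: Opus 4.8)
The plan is to prove the contrapositive by pushing a sphere off of the ``new'' part of $Y$. Assume $\ind Y>n$, which I read as ``$Y$ is $(n-1)$-connected'' (allowing the contractible case, where $\ind Y$ is undefined), and assume that for every simplex $\tau\subset Y\setminus X$ the subcomplex $L_\tau$ of $X$ spanned by $\{x\in X : x\text{ is adjacent to every vertex of }\tau\}$ satisfies $\ind L_\tau>n-\dim\tau$, i.e.\ $L_\tau$ is $(n-\dim\tau-1)$-connected. I want to conclude that $X$ is $(n-1)$-connected, which contradicts $\ind X=n$ and hence proves the lemma. (The case $n=0$ is immediate: then $X=\emptyset$, so every $L_\tau$ is empty; if $Y=\emptyset$ the first alternative holds, and otherwise any vertex $\tau$ of $Y$ has $\ind L_\tau=0\le 0-\dim\tau$, the second.)

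So fix $j\le n-1$ and a simplicial map $f\colon S^{j}\to X$; I must show it bounds in $X$. Since $Y$ is $j$-connected, $f$ extends to $g\colon D^{j+1}\to Y$, which by simplicial approximation I take to be simplicial rel $\partial D^{j+1}$; it suffices to homotope $g$ rel $\partial D^{j+1}$ into $X$. I would do this inductively, at each stage choosing a simplex $\tau$ of $Y\setminus X$ of smallest dimension $d$ whose open simplex still meets the image of $g$; minimality forces $\partial\tau\subseteq X$. The part $\bar R$ of $D^{j+1}$ lying over the closed star $\mathrm{st}(\tau,Y)=\tau*\mathrm{lk}(\tau,Y)$ is a PL subspace of dimension $\le n$, disjoint from $\partial D^{j+1}$ (as $g(\partial D^{j+1})\subseteq X$, which misses the open simplex of $\tau$), and on its frontier $g$ takes values in $\mathrm{st}(\tau,Y)\setminus\mathrm{st}^\circ(\tau,Y)=\partial\tau*\mathrm{lk}(\tau,Y)$, the frontier of the star. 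Since $\mathrm{st}(\tau,Y)$ is a cone, pushing $g$ off the open simplex of $\tau$ rel the frontier amounts to compressing $g|_{\bar R}$ into $\partial\tau*\mathrm{lk}(\tau,Y)$, which is possible once that join is $(n-1)$-connected ($\dim\bar R\le n$ makes this enough); and as $\partial\tau\cong S^{d-1}$, joining with $S^{d-1}$ raises connectivity by $d$, so this reduces to the ``link'' being $(n-d-1)$-connected --- precisely the hypothesis on $L_\tau$, provided $L_\tau$ really is what governs the connectivity of that link. Each successful push-off removes the open simplex of $\tau$ from the image; iterating should land $g$ in $X$.

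The step I expect to be the main obstacle is making this push-off rigorous and, in particular, pinning down exactly which subcomplex controls the obstruction. The honest simplicial link $\mathrm{lk}(\tau,Y)$ need not equal the adjacency complex $L_\tau$ (nor even $\mathrm{lk}(\tau,Y)\cap X$) when $Y$ is not a flag complex, and $\mathrm{lk}(\tau,Y)$ contains vertices outside $X$, so a single push-off can trade $\tau$ for new simplices of $Y\setminus X$ coming from the link --- which is what makes termination of the induction nontrivial. I would try to control this either by passing to a barycentric subdivision (automatically a flag complex), with a comparison of the ``$L$''-complexes before and after, or by a nested induction on $\dim Y$ applied to the pair $L_\tau\subseteq\mathrm{lk}(\tau,Y)$, exploiting the identity $L_{\tau*\rho}=\{x\in L_\tau: x\text{ is adjacent to every vertex of }\rho\}$ to keep the auxiliary ``links'' consistent; this is where the precise form of the statement is used and where most of the work lies. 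Further technical points to dispatch: performing the regular-neighborhood and transversality steps in the PL category so that the dimension count and the obstruction-theoretic compression are legitimate; verifying that minimality of $\dim\tau$ both supplies $\partial\tau\subseteq X$ and keeps the induction well-founded; reducing at the outset to $\dim Y\le n$, since larger simplices are irrelevant to the homotopy groups through dimension $n-1$; and handling the low-dimensional cases ($j=0$, where ``$\pi_0$ nontrivial'' means ``disconnected'' and one pushes $g$ off the cone point of $\mathrm{st}(\tau,Y)$ directly) separately.
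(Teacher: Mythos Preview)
Your overall strategy matches the paper's: both argue by contradiction, take a nontrivial $(n-1)$-sphere in $X$, fill it by an $n$-ball in $Y$ (using that $Y$ is $(n-1)$-connected), and then use the assumed connectivity of the $L_\tau$'s to retract the filling into $X$. The difference is entirely in how the retraction is carried out.

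You propose to iteratively push the filling off of open stars of minimal-dimensional simplices $\tau\subset Y\setminus X$, which leads you (correctly) to worry about the discrepancy between $\mathrm{lk}(\tau,Y)$ and $L_\tau$, about non-flag behavior, and about termination. The paper sidesteps all of this with a single device: after triangulating the filling ball $B$ so that $g$ is simplicial, it passes to the \emph{dual} cell decomposition $\Sigma^*$ of $B$ and builds the retraction $\Psi:B\to X$ skeleton-by-skeleton on $\Sigma^*$, sending each dual cell $\tau^*$ into $V_\tau=L_{\tau^+}$ (where $\tau^+=\tau\setminus X$). The point is that $\dim\tau^*=n-\dim\tau$, and the faces of $\tau^*$ are duals $\sigma^*$ of \emph{cofaces} $\sigma\supset\tau$, for which $V_\sigma\subset V_\tau$ trivially; so the inductive hypothesis already places $\Psi(\partial\tau^*)$ inside $V_\tau$, and the assumed $(n-\dim\tau^+-1)$-connectivity of $V_\tau$ lets you extend. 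No link, no flag hypothesis, no iteration, no termination argument --- the dual decomposition does the bookkeeping in one pass. Your barycentric-subdivision idea would likely work too (barycentric subdivisions are flag, and the first barycentric subdivision is essentially the order complex, whose links are joins that decompose nicely), but it is strictly more work than the paper's route. If you want to complete your version, the cleanest fix is exactly this dual-cell construction; your ``nested induction on $\dim Y$ applied to $L_\tau\subseteq\mathrm{lk}(\tau,Y)$'' is morally the same argument unrolled.
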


\begin{proof}
For every simplex $\tau$ of $Y$,  let 
\[V_{\tau}=\{x \in X|\forall y \in (\tau \setminus X), \ x\mbox{ is adjacent to }y\}.\] 
Note that if $\tau \subset X$ then $\tau \setminus X=\emptyset$, and thus $V_{\tau}$ will be all of $X$. 

For all $\tau \subset Y$, let $\tau^+= \tau \setminus X$. Our goal is to show that there is a simplex $\tau \subset Y$ with $\tau^+ \ne \emptyset$ such that the homotopy index of $V_\tau=V_{\tau^+}$ is at most $n-{\rm dim}(\tau^+)$. If $X=Y$ then the result is immediate. If not, then there exists a simplex $\tau$ such that $\tau^+ \ne \emptyset$. By way of contradiction, we suppose that for all $\tau \subset Y$, $V_\tau$ does not have homotopy index less than or equal to $n-{\rm dim}(\tau^+)$. Thus, for all $\tau$, $V_\tau \ne \emptyset$ and 
\begin{equation}
\label{e:contradictionRevisited}
\pi_i(V_\tau) =1 \mbox{ for all } i \le  n-{\rm dim}(\tau^+)-1. 
\end{equation}

\begin{clm}
\label{c:subsetRevisited}
Suppose $\tau$ is a cell of $Y$ which lies on the boundary of a cell $\sigma$. Then $V_{\sigma} \subset V_{\tau}$. 
\end{clm}

\begin{proof}
Suppose $v \in V_{\sigma}$. Then $v \in X$ and $v$ is connected by an edge to every vertex of $\sigma \setminus X$. Since $\tau$ lies on the boundary of $\sigma$, it follows that $\tau \setminus X \subset \sigma \setminus X$, and thus $v$ is connected by an edge to every vertex of $\tau \setminus X$. Thus, $v \in V_\tau$. 
\end{proof}

If $X = \emptyset$ then the result is immediate, as $V_{\tau}$ will be $\emptyset$ (and thus have homotopy index 0) for any simplex $\tau$ in $Y$. 

If $X \ne \emptyset$ then, by assumption, there is a non-trivial map $\iota$ from an $(n-1)$-sphere $S$ into the $(n-1)$-skeleton of  $X$. Assuming the theorem is false will allow us to inductively construct a map  $\Psi$ of a $n$-ball $B$ into $X$ such that $\Psi(\bdy B)=\iota(S)$. The existence of such a map contradicts the non-trivialty of $\iota$. 

Since $X \subset Y$,  $\iota$ is also a map from $S$ into $Y$.  If  $\pi_{n-1}(Y) \ne 1$ then the result is immediate. Otherwise,  $\iota$ can be extended to a map from an $n$-ball $B$ into $Y$. Let $\Sigma$ denote a triangulation of $B$ so that the map $\iota :B \to Y$ is simplicial. 

Push the triangulation $\Sigma$ into the interior of $B$, so that $\rm{Nbhd}(\bdy B)$ is no longer triangulated (Figure \ref{f:SigmaDual}(b)). Then extend $\Sigma$ to a cell decomposition over all of $B$ by forming the product of each cell of $\Sigma \cap S$ with the interval $I$ (Figure \ref{f:SigmaDual}(c)). Denote this cell decomposition as $\Sigma'$. Note that $\iota$ extends naturally over $\Sigma'$, and the conclusion of Claim \ref{c:subsetRevisited} holds for cells of $\Sigma'$. Now let $\Sigma^*$ denote the dual cell decomposition of $\Sigma'$ (Figure \ref{f:SigmaDual}(d)). This is done in the usual way, so that there is a correspondence between the $d$-cells of $\Sigma ^*$ and the $(n-d)$-cells of $\Sigma'$. Note that, as in the figure, $\Sigma ^*$ is not a cell decomposition of all of $B$, but rather a slightly smaller $n$-ball, which we call $B'$. 

\begin{figure}
\psfrag{a}{(a)}
\psfrag{b}{(b)}
\psfrag{c}{(c)}
\psfrag{d}{(d)}
\begin{center}
\includegraphics[width=5 in]{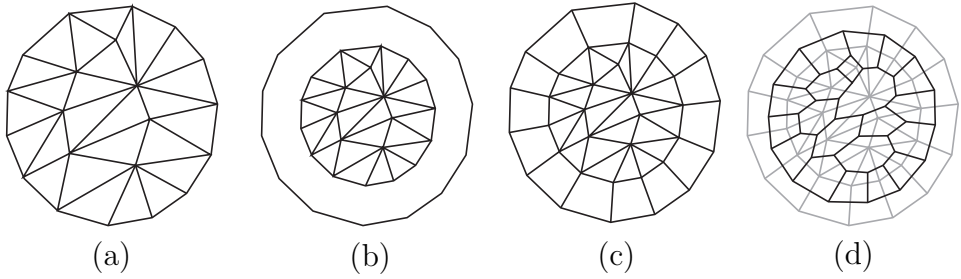}
\caption{(a) The triangulation $\Sigma$ of $B$. (b) Push $\Sigma$ into the interior of $B$. (c) Fill in $\rm{Nbhd}(\bdy B)$ with product cells to complete $\Sigma'$. (d) $\Sigma^*$ is the dual of $\Sigma'$.}
\label{f:SigmaDual}
\end{center}
\end{figure}

For each cell $\tau$ of $\Sigma'$, let $\tau^*$ denote its dual in $\Sigma^*$, and let $V_{\tau}=V_{\iota(\tau)}$. Thus, it follows from Claim \ref{c:subsetRevisited} that if $\sigma^*$ is a cell of $\Sigma^*$ that is on the boundary of $\tau^*$, then $V_{\sigma} \subset V_\tau$. 

We now produce a contradiction by defining a continuous map $\Psi:B' \to X$ such that $\Psi(\bdy B')=\iota(S)$. The map will be defined inductively on the $d$-skeleton of $\Sigma^*$ so that the image of every cell $\tau^*$ is contained in $V_\tau$. 

For each $0$-cell $\tau^* \in \Sigma^*$, choose a point in $V_\tau$ to be $\Psi(\tau^*)$. If $\tau^*$ is in the interior of $B'$ then this point may be chosen arbitrarily in $V_\tau$. If $\tau^* \in \bdy B'$ then $\tau$ is an $n$-cell of $\Sigma'$. This $n$-cell is $\sigma \times I$, for some $(n-1)$-cell $\sigma$ of $\Sigma \cap S$. But since $\iota(S) \subset X$, it follows that $\iota(\tau) \subset X$, and thus $V_\tau=X$. We conclude $\iota(\tau) \subset V_\tau$, and thus we can choose $\iota(\tau^*)$, the barycenter of $\iota(\tau)$, to be $\Psi(\tau^*)$. 

We now proceed to define the rest of the map $\Psi$ by induction. Let $\tau^*$ be a $d$-dimensional cell of $\Sigma^*$ and assume $\Psi$ has been defined on the $(d-1)$-skeleton of $\Sigma^*$. In particular, $\Psi$ has been defined on $\bdy \tau^*$. Suppose $\sigma^*$ is a cell on $\bdy \tau^*$.  By Claim \ref{c:subsetRevisited} $V_\sigma \subset V_\tau$. By assumption $\Psi|\sigma^*$ is defined and $\Psi(\sigma^*) \subset V_\sigma$. We conclude $\Psi(\sigma^*) \subset V_\tau$ for all $\sigma^* \subset \bdy \tau^*$, and thus
\begin{equation}
\label{e:boundary}
\Psi(\bdy \tau^*) \subset V_\tau.
\end{equation}

Note that  \[{\rm dim}(\tau) = n-{\rm dim}(\tau^*)=n-d.\] Since ${\rm dim}(\tau^+) \le {\rm dim}(\tau)$, we have \[{\rm dim}(\tau^+) \le n-d.\] Thus \[d \le n-{\rm dim}(\tau^+),\] and finally \[d-1 \le n-{\rm dim}(\tau^+)-1.\]
It now follows from Equation \ref{e:contradictionRevisited} that $\pi_{(d-1)}(V_\tau)=1$. Since $d-1$ is the dimension of $\bdy \tau^*$, we can thus extend $\Psi$ to a map from $\tau^*$ into $V_\tau$. 

Finally, we claim that if $\tau^* \subset \bdy B'$ then this extension of $\Psi$ over $\tau^*$ can be done in such a way so that $\Psi(\tau^*) =\iota(\tau^*)$. This is because in this case each vertex of $\iota(\tau)$ is in $X$, and hence $V_\tau=X$. As $\iota(S) \subset X=V_\tau$, we have $\iota(\tau^*) \subset V_\tau$. Thus we may choose $\Psi(\tau^*)$ to be $\iota(\tau^*)$.
\end{proof}

\begin{thm}
\label{t:CB^r(H)}
Let $H$ be a non-peripheral, topologically minimal surface in $M$. Then either $\ind\plex{\CC \BB^r(H)} \le \ind\plex{\CC(H)}$, or there exists a collection $\DD$ of pairwise disjoint, real $\bdy$-compressing disks for $H$ such that 
\[\ind\plex{\CC(H/\DD)} \le \ind\plex{\CC(H)} -|\DD|+1.\]
\end{thm}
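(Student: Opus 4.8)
The plan is to apply Lemma~\ref{l:generalization} with $X = \plex{\CC(H)}$ and $Y = \plex{\CC\BB^r(H)}$. Since $\CC(H) \subset \CC\BB^r(H)$ (every compressing disk is already a compressing disk for the pair $(H,\emptyset)$), and adjacency in the disk complex is just pairwise disjointness of representatives, $X$ is indeed a subcomplex of $Y$. Let $n = \ind\plex{\CC(H)}$. The lemma gives two alternatives: either $\ind(Y) \le n$, which is precisely the first conclusion of the theorem, or there is a simplex $\tau \subset Y \setminus X$ such that the subcomplex $V_\tau$ of $X$ spanned by the compressing disks adjacent to every vertex of $\tau$ has homotopy index at most $n - \dim(\tau)$.

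So the work is entirely in the second case: from a simplex $\tau$ of real $\bdy$-compressing disks (disjoint from one another in $M$), I want to extract a collection $\DD$ of pairwise disjoint real $\bdy$-compressing disks so that $\ind\plex{\CC(H/\DD)} \le n - |\DD| + 1$. The natural choice is to take $\DD$ to be a maximal set of representatives of the vertices of $\tau$ that can be made pairwise disjoint \emph{and disjoint from $H$ except along their own arcs}; since $\tau$ is a simplex of the disk complex, its vertices already have pairwise disjoint representatives, so I expect to take $\DD$ to be the full set of $\dim(\tau)+1$ disks. Then $|\DD| = \dim(\tau)+1$, so the target inequality becomes $\ind\plex{\CC(H/\DD)} \le n - \dim(\tau)$. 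The key point connecting this to $V_\tau$ is that surgering $H$ along all the $\bdy$-compressions in $\DD$ turns compressing disks of $H$ that were disjoint from every disk in $\DD$ into compressing disks of $H/\DD$: I claim there is a natural injection (indeed an isomorphism onto a subcomplex) $V_\tau \hookrightarrow \plex{\CC(H/\DD)}$, because a compressing disk $C$ adjacent to all of $\tau$ survives intact in $M$ cut along $H/\DD$ and its boundary remains essential (an inessential loop on $H/\DD$ would, tracing back through the $\bdy$-compressions, have forced $C$'s boundary to be inessential on $H$ or $C$ to have been discarded). Having such an injection of complexes, any non-trivial element of $\pi_{k-1}(V_\tau)$ maps to a non-trivial element of $\pi_{k-1}(\plex{\CC(H/\DD)})$ — non-triviality is preserved because the inclusion of a subcomplex that is a retract, or at least the fact that a nullhomotopy in the larger complex could be pushed back, forces $\ind\plex{\CC(H/\DD)} \le \ind(V_\tau) \le n - \dim(\tau) = n - |\DD| + 1$.

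The main obstacle, and the step requiring genuine care, is establishing that $V_\tau$ includes into $\plex{\CC(H/\DD)}$ in a way that does not decrease the homotopy index — i.e.\ that essentiality of boundary curves is preserved under the $\bdy$-compressions, that no disk in $V_\tau$ gets discarded (its surgered image doesn't land in a ball), and that disjointness relations among disks in $V_\tau$ are faithfully reflected after surgery (so we get an honest simplicial embedding, not just a vertex map). This is where the irreducibility of $M$, incompressibility of $\bdy M$, and the hypothesis that $H$ is non-peripheral get used: non-peripherality guarantees $H/\DD$ is still a genuine surface with a non-degenerate compressing disk complex rather than collapsing to something trivial, and irreducibility lets me discard the stray spheres produced in Definition~\ref{d:Compression} without affecting the combinatorics. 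A subtlety I would need to check is the case where two vertices of $\tau$ only have representatives that are disjoint in $M - N(\KK)$ but intersect $H$ in parallel arcs; by the well-definedness lemma proved above, $H/\DD$ is still well-defined up to isotopy, so this does not cause trouble, but I would spell out that the resulting surgery order is immaterial. Once the embedding $V_\tau \hookrightarrow \plex{\CC(H/\DD)}$ is in hand with the index inequality, the theorem follows immediately.
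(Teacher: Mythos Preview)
Your overall strategy is the same as the paper's: apply Lemma~\ref{l:generalization} with $X=\plex{\CC(H)}$ and $Y=\plex{\CC\BB^r(H)}$, and in the second alternative take $\DD$ to be pairwise disjoint representatives of the vertices of the simplex $\tau$. That part is fine.

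The genuine gap is in the last step. You only argue for an \emph{inclusion} $V_\tau \hookrightarrow \plex{\CC(H/\DD)}$ and then try to deduce $\ind\plex{\CC(H/\DD)} \le \ind(V_\tau)$ from it. That deduction is not valid: a subcomplex inclusion does not in general bound the homotopy index of the ambient complex from above. The non-trivial $(k-1)$-sphere witnessing $\ind(V_\tau)=k$ can become null-homotopic in the larger complex, and indeed $\plex{\CC(H/\DD)}$ could a priori be contractible, in which case its index is undefined and the desired inequality is meaningless. Your justification (``the inclusion of a subcomplex that is a retract, or \ldots a nullhomotopy in the larger complex could be pushed back'') is circular: you would need to already know the inclusion is a retract, and you give no reason for that.

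What actually works---and what the paper does---is to prove the \emph{equality} $V_\tau = \plex{\CC(H/\DD)}$. With equality the index statement is immediate. To get equality, let $H'$ be the raw surgered surface (before discarding ball components); then $\CC(H')=V_\tau$ essentially by definition, and the issue is to show $\CC(H')=\CC(H/\DD)$, i.e.\ that no compressing disk for $H'$ lies on a component that gets discarded. The key observation you are missing is this: if some boundary loop of a non-disk component of $H'$ were inessential on $\bdy M$, an innermost such loop would bound a subdisk of $\bdy M$ isotopic to a compressing disk for $H'$ that can be made disjoint from \emph{every} other disk, forcing $V_\tau$ to be contractible---contradicting that $V_\tau$ has a well-defined homotopy index. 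Hence all such boundary loops are essential, so (by incompressibility of $\bdy M$ and irreducibility of $M$) no non-disk component of $H'$ lies in a ball, and therefore every compressing disk for $H'$ is a compressing disk for $H/\DD$. Non-peripherality of $H$ is used only to ensure $H/\DD\neq\emptyset$.
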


\begin{proof}
Set $X=\plex{\CC(H)}$, $Y=\plex{\CC \BB^r(H)}$, and apply Lemma \ref{l:generalization}. The conclusion of the lemma gives us two possibilities. The first is that $\ind\plex{\CC \BB^r(H)} \le \ind\plex{\CC(H)}$. The second is that there is a simplex $\tau \subset \plex{\BB^r(H)}$ such that the homotopy index of the complex $Z$ spanned by 
\[\{x \in \plex{\CC(H)}|\forall y \in \tau, \ x\mbox{ is adjacent to }y\}\] 
is at most $\ind \plex{\CC(H_m)} - {\rm dim}(\tau)$. Let $\DD$ denote a pairwise disjoint collection of disks representing the vertices of $\tau$. Then ${\rm dim}(\tau)=|\DD|-1$. We claim that $Z$ is precisely $\plex{\CC(H/\DD)}$, and thus we have obtained the desired conclusion. 

Note that the vertices of $Z$ correspond to the set of compressing disks for $H$ that are disjoint from every disk in $\DD$. Let $H'$ denote the surface obtained from $H$ by simultaneous surgery along the disks in $\DD$, so that $H/\DD$ is obtained from $H'$ by removing any component that lies in a ball. (If $H/\DD=\emptyset$ then the surface $H$ would have been peripheral, a contradiction.) It follows immediately that $\CC(H')=Z$. What remains is to show $\CC(H')=\CC(H/\DD)$.

We now claim that every boundary component of every non-disk component of $H'$ is essential on $\bdy M$. If not, then such a boundary component that is innermost on $\bdy M$ bounds a subdisk $C$ of $\bdy M$ that is isotopic to a compressing disk for $H'$. But, as this disk is isotopic into $\bdy M$, it can be made disjoint from every other disk in $\CC(H')$. Thus, $\plex{\CC(H')}$ would be a contractible complex, a contradiction. 

Suppose now $E$ is a compressing disk for $H'$. Then $E$ meets a component $H''$ of $H'$ that is not a disk. The surface $H''$ will either be closed, and hence had been unaffected by the surgery along  the disks in $\DD$, or by the above argument will have boundary that is essential on $\bdy M$. As $\bdy M$ is assumed to be incompressible, we conclude $\bdy H''$ consists of essential loops in $M$. Hence, $H''$ can not be contained in a ball, and is thus a component of $H/\DD$. We conclude $E$ is a compressing disk for $H/\DD$, and thus $\CC(H')=\CC(H/\DD)$. 
\end{proof}

\section{Stage 2: $\plex{\CC \BB^r(H)} \rightarrow \plex{\CC \BB^r (H,\TT^1)}$}

Theorem \ref{t:CB^r(H)} has left us with a surface $H$ such that $\plex{\CC \BB^r(H)}$ has well-defined homotopy index. The goal of this section is to show that $H$ can be isotoped so that  $\ind \plex{\CC \BB^r (H,\TT^1)} \le \ind \plex{\CC \BB^r(H)}$

Henceforth, we will assume that $H$ has been isotoped so that $|\bdy H \cap \TT^1|$ is minimal. This has the following nice consequence. 

\begin{lem}
\label{l:BoundaryDisksAreHonest}
If $|\bdy H \cap \TT^1|$ is minimal and $B \in \BB^r(H,\TT^1)$, then $B \in \BB^r(H)$. 
\end{lem}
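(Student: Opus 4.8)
The claim is that if $|\partial H \cap \TT^1|$ is minimal, then any real $\bdy$-compressing disk $B$ for $(H, \TT^1)$ is automatically a real $\bdy$-compressing disk for $(H, \emptyset)$. Let me unpack what needs to be shown. A disk $B \in \BB^r(H,\TT^1)$ lives in $M - N(\TT^1)$, with $\partial B = \alpha \cup \beta$ where $\alpha = B \cap H$ is essential on $H - N(\TT^1)$ and $\beta = B \cap \partial(M - N(\TT^1))$, and the "real" condition says $\beta$ is not parallel into $\partial H$ within $\partial(M - N(\TT^1))$... wait, actually I need to be careful: $\BB^r(H,\TT^1)$ uses $\KK = \TT^1$, so $\beta \subset \partial(M - N(\TT^1)) = \partial M \cup \partial N(\TT^1)$ minus... hmm, actually $\beta = B \cap \partial M$ per the definition? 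Let me re-read.

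The definition says: "A $\bdy$-compressing disk $B$ for $(H,\KK)$ is a disk embedded in $M-N(\KK)$ so that $\bdy B=\alpha \cup \beta$, where $\alpha=B \cap H$ is an essential arc on $H-N(\KK)$ and $B \cap \bdy M=\beta$." So $\beta \subset \partial M$. But $B$ is embedded in $M - N(\KK)$ — so $B$ must avoid $N(\TT^1)$. The issue: to conclude $B \in \BB^r(H)$, I need $\alpha$ to be essential on $H$ (not just on $H - N(\TT^1)$), and I need $B$ to genuinely be a $\bdy$-compressing disk for $(H,\emptyset)$, i.e. I need to know $\beta$ is still "real" with respect to $H$ rather than $H - N(\TT^1)$.

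**The plan.**

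First, I would address essentiality of $\alpha$. Suppose $\alpha = B \cap H$ is essential on $H - N(\TT^1)$ but inessential on $H$; then $\alpha$ together with a subarc of $\partial H$ cuts off a disk $D$ in $H$, and $D$ must contain at least one point of $\partial H \cap \TT^1$ (otherwise $\alpha$ would already be inessential in $H - N(\TT^1)$, since the cutting-off subdisk would survive). The key move is then to use $B$ to guide an isotopy of $\partial H$ across this disk $D$: pushing the arc of $\partial H$ in $D$ across $B$ (which is disjoint from $\TT^1$) removes all the intersection points of $\partial H \cap \TT^1$ lying in $D$, contradicting minimality of $|\partial H \cap \TT^1|$. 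I would want to be slightly careful that this isotopy is supported near $D \cup B$ and really does reduce the count — the fact that $B \cap \TT^1 = \emptyset$ is exactly what makes the pushed-across copy of the arc disjoint from $\TT^1$ in that region. This is the step I expect to be the main obstacle, since one has to verify the isotopy is well-defined (no other sheets of $H$ in the way — but $B$ is embedded and meets $H$ only in $\alpha$, and $D \subset H$, so the region $D \cup B$ bounds a ball-like region one can push through) and that it strictly decreases the intersection number rather than just relocating intersections.

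Second, given that $\alpha$ is essential on $H$, I would check the "real" condition transfers. If $\beta$ were fake for $(H,\emptyset)$ — i.e. parallel into $\partial H$ on $\partial M$ — then since $\partial N(\TT^1) \cap \partial M$ is a neighborhood of $\TT^1 \cap \partial M$ in $\partial M$, and $B$ is disjoint from $N(\TT^1)$, the parallelism region would either avoid $\TT^1 \cap \partial M$ (making $B$ fake for $(H,\TT^1)$ too, contradicting $B \in \BB^r(H,\TT^1)$) or contain points of $\partial H \cap \TT^1$, in which case the same pushing argument — now isotoping $\partial H$ across the parallelism square and then across $B$ — again contradicts minimality of $|\partial H \cap \TT^1|$. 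Finally, I should note $B$ is already embedded in $M$ (it was embedded in $M - N(\TT^1) \subset M$) and meets $H$ only in $\alpha = \partial B \cap H$, so it satisfies all the requirements of $\BB^r(H,\emptyset) = \BB^r(H)$. Assembling these observations gives $B \in \BB^r(H)$.
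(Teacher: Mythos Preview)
Your overall strategy matches the paper's, but there is a genuine gap in your first step. You assert that the subdisk $D \subset H$ cut off by $\alpha$ ``must contain at least one point of $\partial H \cap \TT^1$,'' justifying this by saying that otherwise $\alpha$ would already be inessential in $H - N(\TT^1)$. That reasoning is not correct: essentiality of $\alpha$ in $H-N(\TT^1)$ only forces $D \cap \TT^1 \ne \emptyset$, and those intersection points may all lie in the \emph{interior} of $H$ (where an interior edge of $\TT^1$ pierces $H$). In that situation your isotopy --- pushing $D$ across $B$ --- removes interior points of $H \cap \TT^1$ but leaves $|\partial H \cap \TT^1|$ unchanged, so you get no contradiction with the minimality hypothesis.

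The paper closes this gap by a different route: rather than using essentiality of $\alpha$ in $H - N(\TT^1)$, it observes that $C \cup B$ (with $C$ your $D$) is a properly embedded disk in $M$ with boundary on $\partial M$, so by incompressibility of $\partial M$ the arc $\beta$ is parallel on $\partial M$ to the arc $\gamma = \partial C \cap \partial H$. This reduces Case~1 to exactly the ``fake'' situation of your Case~2. Now the \emph{realness} of $B$ for $(H,\TT^1)$ --- not the essentiality of $\alpha$ --- is what forces the parallelism region on $\partial M$ to meet $\TT^1$, and since $\beta \cap \TT^1 = \emptyset$ this intersection must occur along $\gamma$, giving $\gamma \cap \TT^1 \ne \emptyset$ and the desired drop in $|\partial H \cap \TT^1|$. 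Your Case~2 sketch already contains this idea; the point is that the same mechanism, via incompressibility of $\partial M$, is also what drives Case~1.
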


\begin{proof}
If $B \notin \BB^r(H)$ then either $B$ is not a $\bdy$-compressing disk for $H$, or $B$ is fake. In the former case $B$ cuts off a subdisk $C$ of $H$. As $\bdy M$ is incompressible, and $C \cup B$ is a disk incident to $\bdy M$, it follows that $B$ meets $\bdy M$ in an arc $\beta$ that is parallel to an arc $\alpha$ of $\bdy H$. By definition, this is also true when $B$ is fake. Since $B \in \BB^r(H,\TT^1)$, $\beta$ does not meet $\TT^1$. Hence, isotoping $\alpha$ to $\beta$ reduces $|\bdy H \cap \TT^1|$, a contradiction. 
\end{proof}

\begin{dfn}
Suppose $D \in \CC \BB^r(H,\TT^1)$. If $D \in \CC \BB^r(H)$ then we say it is {\it honest}. Otherwise, we say $D$ is {\it dishonest}. 
\end{dfn}

By Lemma \ref{l:BoundaryDisksAreHonest}, if $|\bdy H \cap \TT^1|$ is minimal and $D \in \BB^r(H,\TT^1)$ then $D$ must be honest. It follows that if $D$ is a dishonest element of $\CC \BB^r (H,\TT^1)$ then $D \in \CC(H,\TT^1)$ but $D \notin \CC(H)$. Thus, $\bdy D$ bounds a subdisk $C$ of $H$ whose interior meets $\TT^1$.

\begin{lem}
\label{l:HonestAfterDishonest}
Suppose $D$ and $E$ are disjoint elements of $\CC \BB^r(H,\TT^1)$ and $D$ is dishonest. Then $E \in \CC \BB^r(H/D,\TT^1)$.
\end{lem}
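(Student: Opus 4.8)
The statement to prove is: if $D$ and $E$ are disjoint elements of $\CC \BB^r(H,\TT^1)$ and $D$ is dishonest, then $E \in \CC \BB^r(H/D,\TT^1)$.

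The plan is to unwind the structure of a dishonest disk and track what surgery along $D$ does to the region where $E$ lives. By the discussion preceding the lemma, since $D$ is dishonest we know $D \in \CC(H,\TT^1) \setminus \CC(H)$, so $\bdy D$ bounds a subdisk $C$ of $H$ whose interior meets $\TT^1$; in particular $D \cup C$ is a sphere in $M$ (necessarily bounding a ball, by irreducibility), and surgering $H$ along $D$ replaces the subdisk $C$ by a parallel copy of $D$ — up to isotopy in $M$, the surface $H/D$ is obtained from $H$ by pushing $C$ across this ball to the other side of $D$ (and discarding any component that falls into a ball, which here does not affect the relevant component since $D$ was inessential in $H$). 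The first step is to make this precise: $H/D$ and $H$ differ only by an isotopy supported in a neighborhood of the ball bounded by $D \cup C$, together with possibly the removal of a sphere component.

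Next I would locate $E$ relative to this ball. Since $E$ is disjoint from $D$, and $E \cap H = \alpha$ is an arc or loop on $H$ that is disjoint from $\bdy D = \bdy C$, the arc/loop $\alpha$ lies either entirely in $C$ or entirely in $H \setminus C$. If $\alpha \subset H \setminus C$, then $E$ is untouched by the modification taking $H$ to $H/D$ (that modification only alters $H$ in the interior of $C$, up to isotopy), so $E$ is literally still a compressing or $\bdy$-compressing disk for $H/D$; I must then check it is still in $\CC \BB^r$ rather than becoming fake or inessential, which follows because essentiality of $\alpha$ on $H$ versus on $H/D$ is unchanged away from $C$ (a loop bounding a subdisk of $H/D$ would, pulling back across the ball, bound a subdisk of $H$), and realness of a $\bdy$-compression is a condition on $\bdy M$, which is not moved. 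If instead $\alpha \subset C$, then because $C$ is a disk, any loop in $C$ is inessential on $H$, contradicting $E \in \CC(H,\TT^1)$; and an arc in the disk $C$ with endpoints on $\bdy H$ would force $\bdy C$ to meet $\bdy H$, but $C \subset H$ is a subdisk so $\bdy C \subset \mathrm{int}\, H$ — actually the cleanest contradiction is that an essential arc cannot lie in a subdisk of $H$. So this case does not occur.

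The main obstacle I anticipate is the bookkeeping in the $\alpha \subset H \setminus C$ case: verifying that $E$'s boundary arc or loop remains \emph{essential} on the new surface $H/D$ and that, in the $\bdy$-compression subcase, $E$ does not become fake. The key fact to exploit is that $H \setminus C \subset H/D$ as an embedded subsurface (the surgery only changes $H$ inside $C$), so any compression of $H/D$ whose boundary lies in $H \setminus C$ pulls back to a compression of $H$; hence $\alpha$ essential in $H$ implies $\alpha$ essential in $H/D$, and a fake $\bdy$-compression of $H/D$ would give a fake one of $H$. One should also note the edge case where $H/D$ loses a component to a ball: this cannot be the component containing $H \setminus C$ since that component has the same essential boundary curves (or is closed and non-peripheral) as before, so $E$ still meets an honest component of $H/D$. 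Assembling these observations gives $E \in \CC \BB^r(H/D, \TT^1)$.
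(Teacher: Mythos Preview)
Your argument has a real gap, coming from conflating ``essential on $H$'' with ``essential on $H-N(\TT^1)$''. Membership in $\CC(H,\TT^1)$ only says $\alpha=\bdy E$ is essential in $H-N(\TT^1)$, and that is the condition you must track after surgery.

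In your main case $\alpha\subset H\setminus C$, you need $\alpha$ to stay essential in $(H/D)-N(\TT^1)$. Your ``pull back across the ball'' step fails exactly when the hypothetical subdisk $E^*\subset (H/D)-N(\TT^1)$ bounded by $\alpha$ contains the cap $D''$. Pulling back then replaces $D''$ by $C$, and since the interior of $C$ meets $\TT^1$, the disk $(E^*\setminus D'')\cup C$ lies in $H$ but \emph{not} in $H-N(\TT^1)$. So you only conclude that $\alpha$ is inessential in $H$, which does not contradict $E\in\CC(H,\TT^1)$. Concretely, nothing you have said rules out $\alpha$ being parallel to $\bdy D$ in $(H\setminus C)-N(\TT^1)$; in that situation $\alpha$ bounds the disk $D''\cup(\text{annulus})$ in $(H/D)-N(\TT^1)$ while remaining essential in $H-N(\TT^1)$. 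Your dismissal of the case $\alpha\subset C$ has the same defect: a loop in the disk $C$ is inessential in $H$, but if it encircles points of $C\cap\TT^1$ it is essential in $H-N(\TT^1)$, so there is no contradiction with $E\in\CC(H,\TT^1)$.

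The paper does not try to pull back. It assumes $\alpha$ bounds a subdisk $E^*$ of the surgered surface missing $\TT^1$ and splits on which of the two caps $D',D''$ lie in $E^*$. The delicate case---$E^*$ contains exactly one cap---is precisely the one your argument misses; there the paper observes that $\bdy E$ is then isotopic to $\bdy D$ in $H-N(\TT^1)$, so after this isotopy $D\cup E$ is a sphere (or disk) in $M-N(\TT^1)$ cutting off a ball $B$, and one argues that $B$ must contain all of $\TT^1$, which is absurd for the $1$--skeleton of a triangulation of $M$. That global step, using the ambient manifold and $\TT^1$ rather than just the surface, is what your purely surface-level bookkeeping cannot supply.
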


\begin{proof}
$H/D$ is obtained from $H$ by removing a neighborhood of $D \cap H$, and replacing it with two copies $D'$ and $D''$ of $D$. If the lemma is false, then $E \cap (H/D)$ cuts off a subdisk $E^*$ of $H/D$ that is disjoint from $\TT^1$. If $E^*$ does not contain $D'$ or $D''$, then $E$ would not have been a compression for $H-N(\TT^1)$. If $E^*$ contains both $D'$ and $D''$, then $D$ would have been honest. Finally, if $E^*$ contains one of $D'$ or $D''$ (assume the former), then $D \cap H$ is isotopic to $E \cap H$ on $H-N(\TT^1)$. When we isotope these intersections to coincide, then $D \cup E$ becomes either a sphere in $M-N(\TT^1)$, or a disk with boundary on $\bdy M-N(\TT^1)$. In either case, $D \cup E$ cuts a ball $B$ out of $M$. Since $D$ is assumed to be dishonest, it follows that $E$ is dishonest as well. Thus, the ball $B$ must contain all of $\TT^1$, a contradiction. 
\end{proof}

We now define a complexity on $H$, and show that both honest and dishonest compressions and $\bdy$-compressions decrease complexity. 

\begin{dfn}
If $H$ is empty, then we define the {\it width} $w(H)$ to be $(0,0)$. If $H$ is connected, then the {\it width} $w(H)$ is the pair $(-\chi(H), |\TT^1 \cap H|)$. If $H$ is disconnected, then its {\it width} is the ordered set of the widths of its components, where we include repeated pairs and the ordering is non-increasing. Comparisons are made lexicographically at all levels.
\end{dfn}

\begin{lem}
\label{l:widthchange}
Suppose $D\in \CC \BB^r(H,\TT^1)$. Then $w(H/D)<w(H)$.
\end{lem}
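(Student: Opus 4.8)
The plan is to verify that each type of surgery strictly decreases width, by tracking the two coordinates $(-\chi, |\TT^1 \cap H|)$ carefully in a case analysis on what kind of disk $D$ is and on whether $D$ is honest or dishonest. First I would record the elementary effect of a surgery on Euler characteristic: compressing along a disk in $\CC(H,\TT^1)$ increases $\chi$ by $2$ on the affected component (or disconnects it, replacing one component by two whose Euler characteristics sum to $\chi(H'')+2$), while surgering along a disk in $\BB^r(H,\TT^1)$ increases $\chi$ by $1$. In both cases the passage from $H'$ to $H/D$ only discards ball-bounding components, which cannot increase width since the width of a disconnected surface is the non-increasing ordered tuple of component widths and deleting entries only decreases it lexicographically (and discarding a component that is itself a sphere or ball-disk is never the whole surface, by non-peripherality). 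So for honest disks one coordinate strictly improves.

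The subtle part is the dishonest case, where $D \in \CC(H,\TT^1) \setminus \CC(H)$, so $\partial D$ bounds a subdisk $C$ of $H$ whose interior meets $\TT^1$; here $\chi$ does not go down (in fact $H/D$ may be isotopic in $M$ to $H$, or even larger Euler characteristic), so the first coordinate is nonincreasing but possibly unchanged, and I must instead show the \emph{second} coordinate $|\TT^1 \cap H|$ strictly decreases. The key observation is that surgering $H$ along $D$ replaces a neighborhood of $\partial D = \partial C$ on $H$ with two parallel copies of $D$, and since $D$ is disjoint from $\TT^1$ (it lies in $M - N(\TT^1)$ up to the usual identification, or at any rate $\partial D$ is essential in $H - N(\TT^1)$ and $D$ itself meets $\TT^1$ in nothing), the component of $H/D$ obtained by capping off with the two copies of $D$ has fewer intersections with $\TT^1$: we have deleted the disk $C$, whose interior contained at least one point (indeed at least one component) of $\TT^1 \cap H$, and added nothing. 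One must check that the capped-off piece containing $C$ replaced by $D$ is discarded or, if retained, still has strictly smaller $\TT^1$-intersection, and that the other piece has the same $\TT^1$-intersection as before; either way the width drops. The case $D \in \BB^r(H,\TT^1)$ that is dishonest is ruled out by Lemma~\ref{l:BoundaryDisksAreHonest}, so only the compressing-disk case needs this argument.

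The main obstacle I anticipate is bookkeeping the disconnection cases cleanly: when $D$ is a compression whose surgery disconnects the affected component, or when discarding ball components interacts with the lexicographic, multiset ordering on tuples of widths. I would handle this by proving a small auxiliary observation up front — that if $H''$ is a component of $H$ and $H''/D$ (with ball components removed) replaces it among the components of $H/D$, and $w(H''/D) < w(H'')$ in the sense that every component of $H''/D$ has width $\le w(H'')$ with at least one strictly less, then $w(H/D) < w(H)$ — so that the global statement reduces to the single-component computation. With that reduction in hand, the theorem follows by combining: honest $\CC$-disk $\Rightarrow$ first coordinate drops by $2$; honest $\BB^r$-disk $\Rightarrow$ first coordinate drops by $1$; dishonest disk (necessarily in $\CC(H,\TT^1)$) $\Rightarrow$ first coordinate nonincreasing and second coordinate strictly decreasing because the removed subdisk $C$ carried at least one intersection with $\TT^1$.
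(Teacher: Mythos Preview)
Your proposal is correct and follows essentially the same approach as the paper: split into honest versus dishonest disks, use the Euler-characteristic drop in the honest case (separating or not) and the drop in $|\TT^1 \cap H|$ in the dishonest case, where, as you note via Lemma~\ref{l:BoundaryDisksAreHonest}, $D$ must lie in $\CC(H,\TT^1)$ and the discarded sphere component carries the lost $\TT^1$-intersections. The paper's version is terser---it does not isolate the multiset-ordering bookkeeping as a separate auxiliary lemma and simply observes that in the separating honest case both pieces have strictly smaller $-\chi$---but the logic is identical.
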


\begin{proof}
Suppose first that $D$ is honest, so that $D \cap H$ is essential in $H$. If, furthermore, $\bdy D$ does not separate $H$, then $-\chi(H/D)$ is less than $-\chi(H)$, and hence the width is also less. If, on the other hand, $\bdy D$ separates $H$ then $H/D$ is disconnected, and both components have smaller negative Euler characteristic. Hence, again width has decreased.

Now suppose $D$ is dishonest. Then $\bdy D$ cuts off a subdisk $D'$ of $H$ whose interior meets $\TT^1$.  Thus, $D' \cap \TT^1 \ne \emptyset$. The result of surgering along $D$ produces a sphere and a surface that has the same Euler characteristic as $H$, but meets $\TT^1$ fewer times, and thus has smaller width than $H$. This latter surface is precisely $H/D$.
\end{proof}

\begin{dfn}
For any simplicial complex $X$, let $X^{(1)}$ denote the first barycentric subdivision of $X$. If $v$ is a vertex of $X^{(1)}$, then $v^*$ will denote the simplex of $X$ that $v$ is the barycenter of. 
\end{dfn}

\begin{dfn}
\label{d:CompressingSequence}
A {\it compressing $n$-sequence} $\Sigma$ for a surface $H$ is a simplicial $n$-complex (homeomorphic to $D^2$ when $n=2$) where each vertex $v$ is associated with a surface $H_v$ in $M$ which is either isotopic to $H$ in $M$, or is obtained from $H$ by compression or $\bdy$-compression. Let $\Sigma_0$ denote the subcomplex of $\Sigma$ spanned by those vertices $v$ for which $H_v$ is isotopic to $H$ in $M$. $\Sigma$ admits compatible global and local structures, as follows.

\medskip

\begin{figure}
\begin{center}
\psfrag{D}{$D_1$}
\psfrag{E}{$D_2$}
\psfrag{F}{$D_3$}
\psfrag{a}{$a$}
\psfrag{b}{$b$}
\psfrag{c}{$c$}
\psfrag{A}{$\iota(a)$}
\psfrag{B}{$\iota(b)$}
\psfrag{C}{$\iota(c)$}
\psfrag{H}{$H_a$}
\psfrag{I}{$H_b$}
\psfrag{J}{$H_c$}
\psfrag{i}{$\iota$}
\psfrag{K}{$H$}
\psfrag{S}{$\Sigma$}
\psfrag{G}{$\plex{\CC \BB^r(H)}^{(1)}$}
\includegraphics[width=5in]{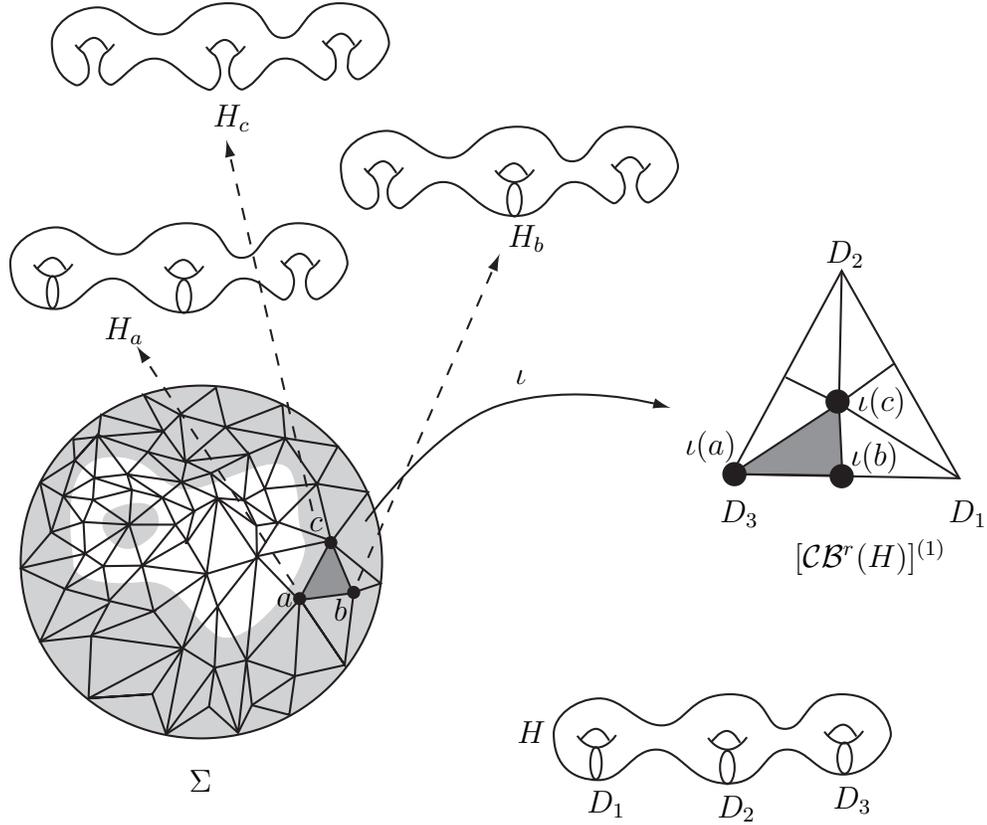}
\caption{The  unshaded region of $\Sigma$ represents $\Sigma_0$. The lighter shaded region is $\Sigma -\Sigma_0$, the domain of $\iota$.}
\label{f:Global}
\end{center}
\end{figure}

\noindent \underline{Global Structure}

	\begin{enumerate}
		\item When $v \in \Sigma_0$ there is a fixed isotopy from $H_v$ to $H$, inducing an identification of $\plex{\CC \BB^r(H_v)}$ with $\plex{\CC \BB^r(H)}$.  
		\item There is a simplicial map $\iota:\Sigma -\Sigma_0 \to \plex{\CC \BB^r(H)}^{(1)}$, such that for each $v$ in the domain of $\iota$ the surface $H_v$ is isotopic in $M$ to  $H/ \iota(v)^*$. See Figure \ref{f:Global}.
	\end{enumerate}

\medskip

\begin{figure}
\begin{center}
\psfrag{D}{$D_1$}
\psfrag{E}{$D_2$}
\psfrag{F}{$D_3$}
\psfrag{a}{$\iota(w)$}
\psfrag{c}{$\eta_v(w)$}
\psfrag{w}{$w$}
\psfrag{v}{$v$}
\psfrag{n}{$\eta_v$}
\psfrag{i}{$\iota$}
\psfrag{K}{$H_v$}
\psfrag{H}{$H_w$}
\psfrag{T}{$\TT^1$}
\psfrag{U}{$\mbox{link}(v)^-$}
\psfrag{S}{$\Sigma$}
\psfrag{G}{$\plex{\CC \BB^r(H_v,\TT^1)}^{(1)}$}
\includegraphics[width=3.5in]{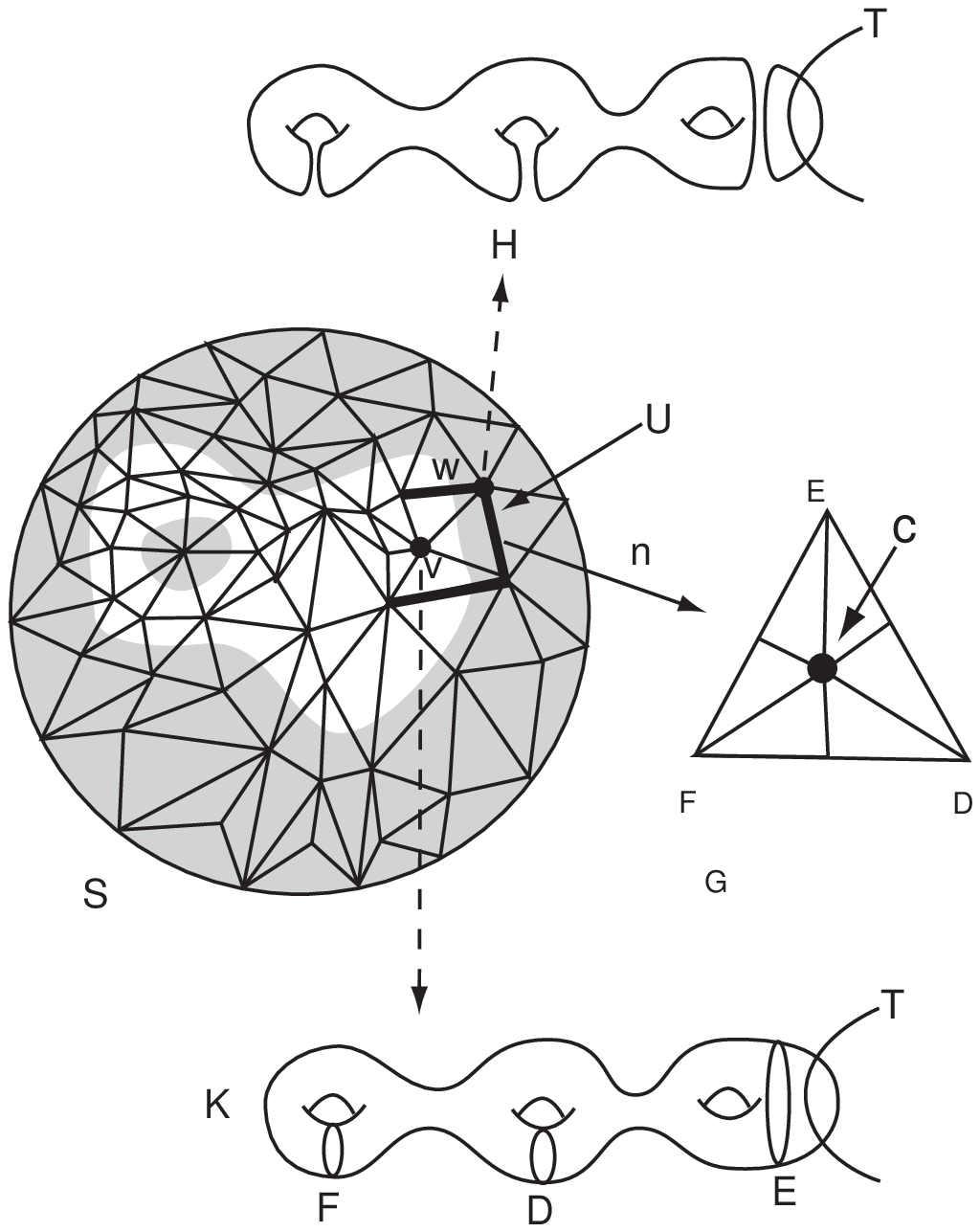}
\caption{$\mbox{link}(v)^-$ is the subcomplex of $\mbox{link}(v)$ spanned by vertices $u$ for which $w(H_u)<w(H_v)$. The map $\eta_v$ sends $\mbox{link}(v)^-$ into $\plex{\CC \BB^r(H_v,\TT^1)}^{(1)}$.}
\label{f:Local}
\end{center}
\end{figure}

\noindent \underline{Local Structure}

	\begin{enumerate}
		\item If $e$ is an edge of $\Sigma$ with endpoints $u$ and $v$, where $H_u, H_v \in \Sigma_0$ and $w(H_u)=w(H_v)$, then $H_u=H_v$.
		\item Let $v$ be a vertex in $\Sigma_0$, and $\mbox{link}(v)^-$ the subcomplex of $\mbox{link}(v)$ spanned by vertices $u$ such that $w(H_u)<w(H_v)$. Then there is a simplicial map $\eta_v:\mbox{link}(v)^- \to \plex{\CC \BB^r(H_v,\TT^1)}^{(1)}$ such that for each $w \in \mbox{link}(v)^-$, $H_w=H_v/\eta_v(w)^*$.\footnote{Equality here indicates that there exists a set of disk representatives for the vertices of $\eta_v(w)^*$ so that $H_w$ is precisely the surface obtained from $H_v$ by surgering along these disks.} See Figure \ref{f:Local}.
	\end{enumerate}

\medskip

\begin{figure}
\begin{center}
\psfrag{D}{$D_1$}
\psfrag{E}{$D_2$}
\psfrag{F}{$D_3$}
\psfrag{a}{$\iota(w)$}
\psfrag{c}{$\eta_v(w)$}
\psfrag{v}{$v$}
\psfrag{w}{$w$}
\psfrag{n}{$\eta_v$}
\psfrag{i}{$\iota$}
\psfrag{K}{$H_v$}
\psfrag{T}{$\TT^1$}
\psfrag{S}{$\Sigma$}
\psfrag{P}{$\plex{\CC \BB^r(H)}^{(1)}$}
\psfrag{G}{$\plex{\CC \BB^r(H_v,\TT^1)}^{(1)}$}
\includegraphics[width=5in]{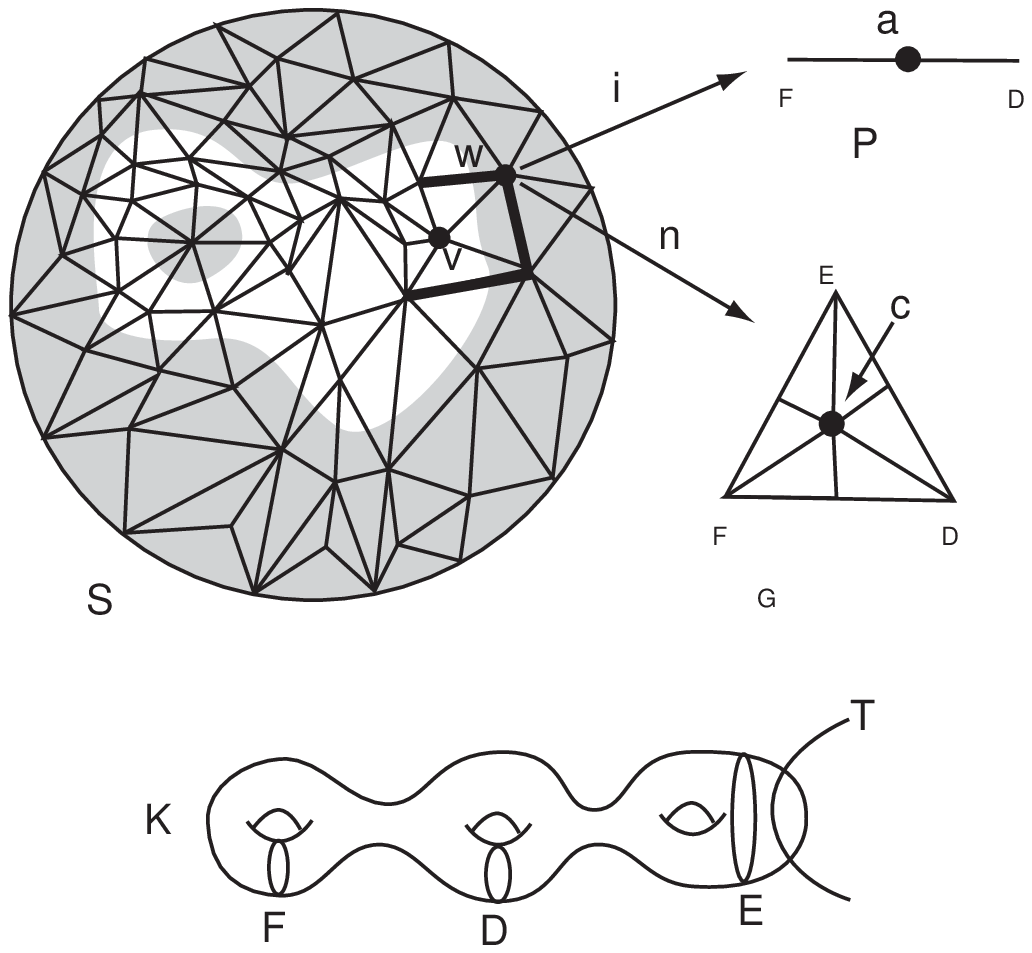}
\caption{The compatibility condition implies that $\iota(w)$ is at the barycenter of the simplex of $\plex{\CC \BB^r(H)}$ spanned by the vertices of $\eta_v(w)^*$ corresponding to honest compressing disks.}
\label{f:Compatibility}
\end{center}
\end{figure}

\noindent \underline{Compatibility Condition}

For each vertex $v$ in $\Sigma_0$,  and each $w \in \mbox{link}(v)^-$, the simplex of $\plex{\CC \BB^r(H)}$ spanned by the vertices of $\eta_v(w)^*$ that represent honest compressing disks for $H$ is precisely the simplex $\iota(w)^*$. See Figure \ref{f:Compatibility}.
\end{dfn}

\bigskip

\begin{dfn}
Suppose $\Sigma$ is a compressing $n$-sequence, $v \in \Sigma_0$, and $V$ is a component of the subcomplex of  $\Sigma$ such that $H_w=H_v$ for all $w \in V$. Let $\rm{star}(V)$ denote the union of the simplices of $\Sigma$ that meet $V$, $\overline{\rm{star}}(V)$ the closure of $\rm{star}(V)$, and  ${\rm link}(V)=\overline{\rm{star}}(V)-\rm{star}(V)$. Then we say $V$ is a {\it plateau} if $w(H_v)>w(H_u)$ for all $u \in {\rm link}(V)$.
\end{dfn}

Now suppose $V$ is a plateau, and $v$ is a vertex of $V$. Then note that the maps $\eta _w$, for each $w \in V$, induce a simplicial map $\eta_V:{\rm link}(V) \to \plex{\CC \BB^r(H_v,\TT^1)}^{(1)}$. See Figure \ref{f:ThickPlateau}.

\begin{figure}
\begin{center}
\psfrag{a}{$\iota(w)$}
\psfrag{c}{$\eta_V(w)$}
\psfrag{w}{$w$}
\psfrag{v}{$v$}
\psfrag{n}{$\eta_V$}
\psfrag{i}{$\iota$}
\psfrag{K}{$H_v$}
\psfrag{H}{$H_w$}
\psfrag{T}{$\TT^1$}
\psfrag{U}{$\mbox{link}(v)^-$}
\psfrag{S}{$\Sigma$}
\psfrag{G}{$\plex{\CC \BB^r(H_v,\TT^1)}^{(1)}$}
\includegraphics[width=3.5in]{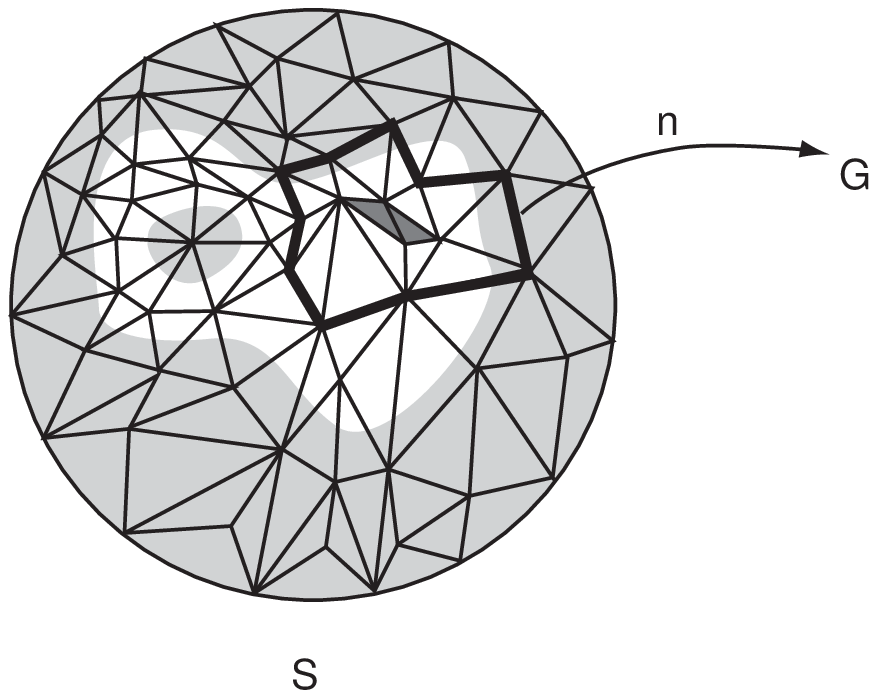}
\caption{The darker shaded region represents a plateau $V$. The map $\eta_V$ sends $\mbox{link}(V)$ into $\plex{\CC \BB^r(H_v,\TT^1)}^{(1)}$, where $v$ is any vertex of $V$.}
\label{f:ThickPlateau}
\end{center}
\end{figure}

\begin{dfn}
Let $S$ be an $(n-1)$-cycle in the singular homology of $\plex{\CC \BB^r(H)}^{(1)}$. We say a compressing $n$-sequence $\Sigma$ for $H$ {\it spans $S$} if $\iota(\bdy \Sigma)=S$.
\end{dfn}

\begin{lem}
\label{l:ThickExists}
Suppose $\Sigma$ is a compressing $n$-sequence that spans an $(n-1)$-cycle $S$. If $n=2$ and $S$ is non-trivial in $\pi_1(\plex{\CC \BB^r(H)}^{(1)})$, or $n \ne 2$ and $S$ is non-trivial in $H_{n-1}(\plex{\CC \BB^r(H)}^{(1)})$, then $\Sigma$ has a plateau $V$. 
\end{lem}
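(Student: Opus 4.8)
The plan is to extract a plateau from $\Sigma$ by a minimality argument, reasoning about the width function $w$ and the map $\iota$. The key point is that a compressing $n$-sequence $\Sigma$ spanning a nontrivial cycle $S$ cannot have all of $\Sigma$ equal to $\Sigma_0$, because then $\iota$ would be defined nowhere on $\bdy\Sigma$ yet must satisfy $\iota(\bdy\Sigma)=S$; more precisely, since $\iota$ is only defined on $\Sigma-\Sigma_0$ and $\iota(\bdy\Sigma)=S$ is nontrivial, $\bdy\Sigma$ must meet $\Sigma-\Sigma_0$ in an essential way. So there are vertices $u$ with $H_u$ \emph{not} isotopic to $H$; by Lemma~\ref{l:widthchange} (or rather the fact that surgery strictly decreases width) such vertices have $w(H_u)<w(H)$. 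In fact I would want to observe: every vertex $v$ of $\Sigma$ satisfies $w(H_v)\le w(H_u)$ for $u\in\Sigma_0$, i.e. $\Sigma_0$-vertices sit at the top of the width-ordering, since a surgered surface always has strictly smaller width than the surface it was surgered from — and the surfaces $H_v$ for $v\in\Sigma_0$ are all the (single) surface $H$ up to isotopy.

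**The main construction** is then as follows. Look at the collection of all components $V$ of the subcomplex $\{w\in\Sigma : H_w=H_v\}$ over vertices $v\in\Sigma_0$; equivalently, partition $\Sigma_0$ (using Local Structure condition (1), which forces $H_u=H_v$ along edges within $\Sigma_0$ whenever widths agree — and widths do agree on $\Sigma_0$) into its ``isotopy-constant'' pieces. I claim at least one such $V$ is a plateau. A component $V$ fails to be a plateau exactly when some $u\in\mathrm{link}(V)$ has $w(H_u)\ge w(H_v)$. But $\mathrm{link}(V)$ vertices are either again in $\Sigma_0$ (a different constant-component, handled by the Local Structure) or outside $\Sigma_0$, where width has strictly dropped. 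So the obstruction to $V$ being a plateau is purely that it is adjacent to \emph{another} constant-component $V'$ of $\Sigma_0$ with $H_{V'}$ a different representative of $H$ — but since all $H_v$, $v\in\Sigma_0$, are isotopic in $M$ with the same width, and adjacent vertices in $\Sigma_0$ with equal width satisfy $H_u=H_v$, two \emph{distinct} constant-components of $\Sigma_0$ can never be adjacent. Therefore every component $V$ of the constant-subcomplex over $\Sigma_0$ \emph{is} a plateau, provided $\Sigma_0\ne\Sigma$; and $\Sigma_0\ne\Sigma$ holds because $S$ is nontrivial.

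**To make this last point rigorous** I would argue by contradiction: if $\Sigma=\Sigma_0$, then by Global Structure (1) every $H_v$ is identified with $H$, the map $\iota$ has empty domain, so the condition ``$\Sigma$ spans $S$'', $\iota(\bdy\Sigma)=S$, would force $S$ to be the empty/trivial cycle — contradicting nontriviality. (Here the case split $n=2$ versus $n\ne2$ enters only through what ``nontrivial'' means — $\pi_1$ versus $H_{n-1}$ — but the argument that $S=\emptyset$ is impossible is the same in both cases.) Hence $\Sigma\ne\Sigma_0$, there is at least one vertex outside $\Sigma_0$, and, tracing through a path in $\Sigma$ from that vertex toward $\Sigma_0$ (widths are bounded and strictly decrease under surgery, so such paths interact with $\Sigma_0$ in a controlled way), we find a constant-component $V$ of $\Sigma_0$ nonempty and, by the above, automatically a plateau.

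**The step I expect to be the main obstacle** is verifying cleanly that $\mathrm{link}(V)$ contains no vertex $u$ with $w(H_u)\ge w(H_v)$ — i.e. that the only way to leave a constant-component $V\subset\Sigma_0$ is to strictly drop width. One must rule out an edge from $V$ to another $\Sigma_0$-vertex $u$ with $H_u$ isotopic to $H$ but \emph{not} literally equal to $H_v$ as a subset; this is exactly what Local Structure (1) is designed to prevent, but one has to check it applies to every edge leaving $V$, including edges not contained in $\Sigma_0$ — and for those, Local Structure (2)'s map $\eta_v$ together with Lemma~\ref{l:widthchange} forces $w(H_u)<w(H_v)$ since $H_u=H_v/\eta_v(u)^*$ is a genuine nonempty surgery. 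Assembling these two cases into the single statement ``$w(H_v)>w(H_u)$ for all $u\in\mathrm{link}(V)$'' is the crux, and once it is in hand the definition of plateau is satisfied verbatim.
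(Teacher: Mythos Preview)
Your argument has the right shape but two genuine errors, and both stem from the same misconception.

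\textbf{You argue the wrong non-emptiness.} You show $\Sigma_0\ne\Sigma$, but what is actually needed is $\Sigma_0\ne\emptyset$. A plateau is by definition built around a vertex $v\in\Sigma_0$; if $\Sigma_0=\emptyset$ there is nothing to find. The paper's argument here is the mirror image of yours: if $\Sigma_0=\emptyset$ then $\iota$ is defined on \emph{all} of $\Sigma$, and $\iota(\Sigma)$ exhibits $S=\iota(\bdy\Sigma)$ as bounding (a null-homotopy when $n=2$, a singular $n$-chain otherwise), contradicting non-triviality. Your version---that $\iota$ has empty domain when $\Sigma_0=\Sigma$---is true but does not help; nothing in the definition of plateau requires $\Sigma\setminus\Sigma_0$ to be nonempty.

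\textbf{Your width claim on $\Sigma_0$ is false.} You assert that all $H_v$, $v\in\Sigma_0$, are ``isotopic in $M$ with the same width,'' and from this deduce that every constant-component of $\Sigma_0$ is a plateau. But width is the pair $(-\chi(H_v),\,|\TT^1\cap H_v|)$, and while the first coordinate is an isotopy invariant, the second is not: different positions of $H$ relative to $\TT^1$ meet it in different numbers of points. So two adjacent vertices $u,v'\in\Sigma_0$ can perfectly well have $w(H_u)>w(H_{v'})$; Local Structure (1) only fires when the widths are \emph{equal}, and in that unequal case gives you nothing. Hence a constant-component $V$ can have a neighbor $u\in\Sigma_0$ with strictly larger width, and then $V$ is not a plateau. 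The paper repairs this in one stroke: choose $v\in\Sigma_0$ with $w(H_v)$ \emph{maximal}, and let $V$ be its constant-component. Then any $u\in\mathrm{link}(V)\cap\Sigma_0$ has $w(H_u)\le w(H_v)$ by maximality, equality is ruled out by Local Structure (1), and any $u\notin\Sigma_0$ has strictly smaller first width-coordinate since it is a genuine compression of $H$. That maximality step is exactly what your ``every component works'' claim was trying to avoid, and it cannot be avoided.
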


\begin{proof}
First, note that $\Sigma_0$ can not be empty because otherwise $\iota$ would map all of $\Sigma$ into $\plex{\CC \BB^r(H)}^{(1)}$. When $n=2$ this gives a homotopy of $S$ to a point, and when $n \ne 2$ this gives a singular $n$-chain that $S$ is the boundary of. Now let $v$ denote a vertex of $\Sigma_0$, for which $w(H_v)$ is maximal. Let $V$ denote the component of the subcomplex of $\Sigma$ spanned by vertices $w$ with $H_w=H_v$, containing $v$. Then $V$ must be a plateau. 
\end{proof}

\begin{dfn}
Let $\Sigma$ be a compressing $n$-sequence. Let $\{v_i\}$ denote a choice of vertex in each plateau. Then the {\it size} of $\Sigma$ is the ordered set $\{w(H_{v_i})\}$, where the ordering is non-increasing and comparisons are made lexicographically. 
\end{dfn}

By Lemma \ref{l:ThickExists}, the size of a compressing $n$-sequence is well-defined, as long as it spans a homotopically/homologically non-trivial cycle $S$.

\begin{lem}
\label{l:ThickGivesResult}
Let $S$ be an $(n-1)$-cycle, and $\Sigma$ a minimal size compressing $n$-sequence that spans $S$. Let $V$ be a  plateau of $\Sigma$. If $n=2$ and $S$ is non-trivial in $\pi_1(\plex{\CC \BB^r(H)}^{(1)})$ then $\eta_V({\rm link}(V))$ is non-trivial in $\pi_1(\plex{\CC \BB^r(H_v,\TT^1)}^{(1)})$. If $n \ne 2$ and $S$ is non-trivial in $H_{n-1}(\plex{\CC \BB^r(H)}^{(1)})$, then $\eta_V({\rm link}(V))$ is non-trivial in $H_{n-1}(\plex{\CC \BB^r(H_v,\TT^1)}^{(1)})$.
\end{lem}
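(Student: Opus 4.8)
The plan is to argue by contradiction: suppose $\eta_V(\mathrm{link}(V))$ is trivial in the relevant homotopy/homology group of $\plex{\CC \BB^r(H_v,\TT^1)}^{(1)}$, and use this to modify $\Sigma$ near the plateau $V$ so as to produce a new compressing $n$-sequence $\Sigma'$ that still spans $S$ but has strictly smaller size, contradicting minimality. The point is that $V$ is a maximal-width piece of $\Sigma$, so any modification we make to eliminate it — replacing $\mathrm{star}(V)$ by a new subcomplex — only involves surfaces of strictly smaller width (those appearing on $\mathrm{link}(V)$ and the fillings we insert), so the size genuinely decreases.

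First I would set up the local geometry: let $v$ be a vertex of the plateau $V$, write $K = \plex{\CC \BB^r(H_v,\TT^1)}^{(1)}$, and recall that $\eta_V : \mathrm{link}(V) \to K$ is the simplicial map induced by the maps $\eta_w$, $w \in V$. The hypothesis that $\eta_V(\mathrm{link}(V))$ is trivial means that (when $n=2$) the loop $\eta_V(\mathrm{link}(V))$ bounds a disk in $K$, or (when $n \ne 2$) the cycle $\eta_V(\mathrm{link}(V))$ bounds a chain in $K$. In either case, after subdividing, I get a simplicial $n$-ball (or $n$-chain realized by a complex) $P$ mapping to $K$ with $\bdy P = \mathrm{link}(V)$ and $P|_{\bdy} = \eta_V$. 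The crucial observation is the one highlighted in the excerpt before the statement: the maps $\eta_w$ are compatible with $\iota$ via the compatibility condition, so honest compressing disks appearing in $\eta_V(w)^*$ are exactly those recording $\iota(w)^*$. This lets me convert the filling $P$ in $K = \plex{\CC \BB^r(H_v,\TT^1)}^{(1)}$ into genuine compressing-sequence data: each vertex $u$ of $P$ gets assigned the surface $H_v / \eta_V(u)^*$ (which has width strictly less than $w(H_v)$ by Lemma \ref{l:widthchange}, since all the disks involved are nontrivial elements of $\CC\BB^r(H_v,\TT^1)$), and these surfaces are obtained from $H_v$, hence from $H$, by compressions and $\bdy$-compressions.

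Next I would perform the surgery on $\Sigma$: remove $\mathrm{star}(V)$ and glue in $P$ along $\mathrm{link}(V) = \bdy P$. I need to check that the result $\Sigma'$ is still a valid compressing $n$-sequence — verifying the global structure (extend $\iota$ over the new region using the honest part of $\eta_V$, which is legitimate exactly by the compatibility condition), the local structure (define $\eta_u$ on the links of the new vertices; since these new surfaces have smaller width than $H_v$, and $v$ had maximal width among $\Sigma_0$, this is consistent), and that $\Sigma'_0$ and the plateau structure behave. Then $\bdy \Sigma' = \bdy \Sigma$ and $\iota(\bdy \Sigma') = S$ still, so $\Sigma'$ spans $S$. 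Finally I would compare sizes: the plateau $V$ has been destroyed and every surface that now appears in place of it has width strictly less than $w(H_v)$, and no other plateau's width has increased, so $\mathrm{size}(\Sigma') < \mathrm{size}(\Sigma)$, the desired contradiction. The separate cases $n=2$ versus $n \ne 2$ are handled uniformly in this scheme; only the input (loop bounding a disk vs.\ cycle bounding a chain) and the homotopy-vs-homology bookkeeping differ, and in the $n = 2$ case I use that $\Sigma$ is required to be homeomorphic to $D^2$ so that $\mathrm{link}(V)$ carries an honest loop.

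The main obstacle I expect is the bookkeeping in constructing $\Sigma'$ and re-verifying \emph{all} the structural axioms of Definition \ref{d:CompressingSequence} for the glued-in piece — in particular making the global map $\iota$, the local maps $\eta_u$, and the compatibility condition all hold simultaneously on the new vertices, and ensuring that subdivisions needed to make $P$ simplicial don't break the fact that vertices of $\Sigma'_0$ still record surfaces isotopic to $H$. The compatibility condition is what makes this possible at all: it is precisely the statement that the ``honest shadow'' of the local data $\eta_V$ agrees with the global data $\iota$, so that collapsing the plateau in the local complex $\plex{\CC \BB^r(H_v,\TT^1)}^{(1)}$ descends to a legitimate move in the global complex $\plex{\CC \BB^r(H)}^{(1)}$. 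A secondary subtlety is confirming that $\mathrm{link}(V)$ is genuinely an $(n-1)$-cycle/sphere (not just a chain) so that the triviality hypothesis on $\eta_V(\mathrm{link}(V))$ gives a filling of the right dimension; this should follow from $V$ being a full subcomplex with the plateau property, together with the assumption that $\Sigma \cong D^2$ when $n=2$.
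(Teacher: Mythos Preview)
Your proposal is correct and follows essentially the same strategy as the paper's proof: assume triviality, obtain a filling $\Pi$, replace the plateau region by $\Pi$ with surfaces $H_v/\phi(w)^*$, extend $\iota$ via the honest part (using compatibility), define the local maps $\eta_w$, and derive a smaller-size compressing $n$-sequence. One refinement worth noting for the $n=2$ case (precisely the ``secondary subtlety'' you flagged): the paper does not literally excise $\overline{\mathrm{star}}(V)$, since $\mathrm{link}(V)$ may have several components and gluing disks along each would destroy the $D^2$ topology; instead it takes an \emph{outermost} component $\alpha$ of $\mathrm{link}(V)$, lets $A$ be the subdisk of $\Sigma$ bounded by $\alpha$ (so $V\subset A$), and replaces all of $A$ by the single filling disk $\Pi$.
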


\begin{proof}
We first address the case where $n=2$. Let $\alpha$ be a component of ${\rm link}(V)$ that is outermost on the disk $\Sigma$. Let $A$ be the subdisk of $\Sigma$ bounded by $\alpha$, and note that $V \subset A$. If the lemma is false, then $\eta_V(\alpha)$ is trivial in $\pi_1(\plex{\CC \BB^r(H_v,\TT^1)}^{(1)})$. Thus, we may extend the map $\eta_V|\alpha$ over a disk $\Pi$. That is, there is a map $\phi:\Pi \to \pi_1(\plex{\CC \BB^r(H_v,\TT^1)}^{(1)})$ such that $\phi(\bdy \Pi)=\eta_V(\alpha)$. In this case, we show we can create a new compressing $n$-sequence of smaller size by replacing the disk $A$ in $\Sigma$ with $\Pi$. 

If $n \ne 2$ and the lemma is false, then $\eta_V$ can be extended to a simplicial map $\phi$ from a simplicial $n$-complex $\Pi$ into $\plex{\CC \BB^r(H_v,\TT^1)}^{(1)}$ (so that $\phi(\bdy \Pi)=\eta_V({\rm link}(V))$). In this case, we show we can create a new compressing $n$-sequence of smaller size by replacing $\overline{\rm{star}}(V)$ in $\Sigma$ with $\Pi$. 

In either case, to realize the new complex thus obtained as a compressing $n$-sequence, we go through each of the conditions in Definition \ref{d:CompressingSequence}. Choose $v \in V$. We associate each vertex $w$ of $\Pi$ with a surface $H_w$ in $M$ by letting $H_w=H_v/\phi(w)^*$. Note that when $H_w$ is isotopic to $H_v$ (and thus $H$), this gives a particular isotopy. 

To satisfy the global condition of Definition \ref{d:CompressingSequence}, we must extend the map $\iota$ over those vertices $w$ of $\Pi$ that are now associated with surfaces not isotopic to $H$. We will call the new map thus created $\iota'$. This map is defined by letting $\iota'(w)$ be the barycenter of the simplex spanned by the vertices of $\phi(w)^*$ representing honest compressing disks. Since $\phi(\bdy \Pi)=\eta_V(\bdy \Pi)$,  the compatibility condition for $\eta_V$ and the original map $\iota$ guarantees that $\iota'|\bdy \Pi=\iota|\bdy \Pi$.

We now discuss the local condition. Let $w$ be a vertex of $\Pi$ for which $H_w$ is isotopic to $H$ in $M$. Let $u$ be a vertex adjacent to $w$. Since $u$ and $w$ are connected by an edge, and the map $\phi$ is simplicial, we may conclude that $\phi(u)$ and $\phi(w)$ are either the same vertex of $\plex{\CC \BB^r(H_v,\TT^1)}^{(1)}$, or they are connected by an edge in $\plex{\CC \BB^r(H_v,\TT^1)}^{(1)}$. In the former case $H_u$ and $H_w$ will be the same surface. In the latter case one of $\phi(u)^*$ or $\phi(w)^*$ is a face of the other. If $\phi(u)^*$ is a face of $\phi(w)^*$ then by Lemma \ref{l:widthchange}  the width of $H_u=H_v/\phi(u)^*$ is larger than the width of $H_w=H_v/\phi(w)^*$. In this case there is nothing further to show. When $\phi(w)^*$ is a face of $\phi(u)^*$ the width of $H_u$ is less than the width of $H_w$, and hence $u \in \mbox{link}(w)^-$. In this case we must now define $\eta_w(u)$. Since $H_w$ is isotopic to $H$ in $M$, it must be the case that the disks represented by the vertices of $\phi(w)^*$ are dishonest. Hence, by Lemma \ref{l:HonestAfterDishonest} $H_u$ is obtained from $H_w$ by surgering along the simplex $\phi(u)^*-\phi(w)^*$. We may thus define $\eta_w(u)$ to be the barycenter of the simplex $\phi(u)^*-\phi(w)^*$, so that $H_u=H_w/\eta_w(u)^*$. 

Finally, we must check the compatibility of the maps $\eta_w$ and $\iota'$ defined above. Since all of the compressing disks represented by vertices of $\phi(w)^*$ are dishonest, the vertices of $\eta_w(u)^*=\phi(u)^*-\phi(w)^*$ that represent honest compressing disks are just the vertices of $\phi(u)^*$ that represent honest compressing disks.  These vertices span the simplex that $\iota'(u)$ is the barycenter of, as required by the compatibility condition. 
\end{proof}


\begin{thm}
\label{t:GammaBB(H)disconnectedToGammaBB(H1)disconnected}
If $\ind \plex{\CC \BB^r(H)}$ is well defined, then $H$ may be isotoped so that
\[\plex{\CC \BB^r(H,\TT^1)} \le \ind \plex{\CC \BB^r(H)}.\]
\end{thm}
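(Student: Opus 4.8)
The plan is to prove the theorem directly by feeding the machinery of the previous subsection — the notion of a compressing $n$-sequence together with Lemmas~\ref{l:ThickExists} and~\ref{l:ThickGivesResult} — an appropriate starting object, and then invoking minimality. Write $n=\ind\plex{\CC \BB^r(H)}$, which is assumed well defined, and keep $H$ in the standing position with $|\bdy H\cap\TT^1|$ minimal. Since the homotopy index is exactly $n$, the complex $\plex{\CC \BB^r(H)}^{(1)}$ is $(n-2)$-connected and carries a non-trivial class of the type the later lemmas want: a non-trivial $S\in\pi_1$ when $n=2$, and (by the Hurewicz theorem) a non-trivial $S\in H_{n-1}$ when $n\ne 2$. (The case $\plex{\CC \BB^r(H)}=\emptyset$ is handled separately by a more classical innermost-disk argument.) I would realize $S$ by a simplicial map from a triangulated $(n-1)$-cycle $P$ into $\plex{\CC \BB^r(H)}^{(1)}$.

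First I would manufacture a single compressing $n$-sequence $\Sigma$ that spans $S$, to get the process started. Take $\Sigma$ to be the cone on $P$ (so that $\Sigma\cong D^2$ precisely when $n=2$), let $\Sigma_0$ be the cone vertex $v$ with $H_v=H$, and let $\iota$ be the radial extension of the chosen map over $\Sigma-\{v\}$, subdivided so as to be simplicial; for every other vertex $w$ set $H_w=H/\iota(w)^*$. Because each $\iota(w)^*$ is a non-empty simplex of honest disks, Lemma~\ref{l:widthchange} gives $w(H_w)<w(H)$, so $\mbox{link}(v)^-=\mbox{link}(v)$, and one may take $\eta_v$ to be $\iota$ post-composed with the inclusion $\plex{\CC \BB^r(H)}\hookrightarrow\plex{\CC \BB^r(H,\TT^1)}$; the global, local, and compatibility conditions of Definition~\ref{d:CompressingSequence} are then routine to check, and $\iota(\bdy\Sigma)=S$ by construction. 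I expect this verification — and, more generally, the bookkeeping of which intermediate surfaces stay in minimal position with respect to $\TT^1$, which is what allows one to invoke Lemma~\ref{l:BoundaryDisksAreHonest} inside the proof of Lemma~\ref{l:ThickGivesResult} — to be the main technical chore; the genuine conceptual difficulty has already been absorbed into Lemma~\ref{l:ThickGivesResult}.

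With a compressing $n$-sequence spanning $S$ now in hand, I would choose one, call it $\Sigma$, of minimal size — this is well defined by Lemma~\ref{l:ThickExists} together with the remark following the definition of size. By Lemma~\ref{l:ThickExists}, $\Sigma$ has a plateau $V$; fix $v\in V$, so that $H_v$ is isotopic to $H$ in $M$. Lemma~\ref{l:ThickGivesResult} then asserts that $\eta_V(\mbox{link}(V))$ is non-trivial in $\pi_1(\plex{\CC \BB^r(H_v,\TT^1)}^{(1)})$ when $n=2$, and in $H_{n-1}(\plex{\CC \BB^r(H_v,\TT^1)}^{(1)})$ when $n\ne 2$. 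In either case a non-trivial class in degree $n-1$ forces $\ind\plex{\CC \BB^r(H_v,\TT^1)}\le n$: if $\pi_i$ vanishes for all $i<n-1$ then the index is exactly $n$ (again by Hurewicz), and otherwise it is already smaller. Since $H_v$ differs from $H$ only by an ambient isotopy of $M$, it is exactly the surface the theorem calls for, so isotoping $H$ to $H_v$ completes the argument.
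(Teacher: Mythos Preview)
Your overall architecture is correct and matches the paper's: produce one compressing $n$-sequence spanning $S$, pass to one of minimal size, and then read off the conclusion from Lemmas~\ref{l:ThickExists} and~\ref{l:ThickGivesResult}. The gap is in the first step, where you build the initial compressing $n$-sequence by simply coning and setting $\eta_v$ equal to $\iota$ ``post-composed with the inclusion $\plex{\CC\BB^r(H)}\hookrightarrow\plex{\CC\BB^r(H,\TT^1)}$.'' That inclusion does not exist. A disk $D\in\CC\BB^r(H)$ lives in $M$, not in $M-N(\TT^1)$; generically $D$ meets $\TT^1$ in interior points, and there is no reason one can isotope $D$ off $\TT^1$ rel $\bdy D$ (an edge of $\TT^1$ may link $\bdy D$). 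So the disks $\iota(w)^*$ you want to feed to $\eta_v$ are simply not elements of $\CC\BB^r(H_v,\TT^1)$, and the local condition~(2) of Definition~\ref{d:CompressingSequence} cannot be satisfied this way. This is not bookkeeping; it is the whole content of the theorem.

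The paper's construction is precisely designed to fix this. For each chosen representative $D_i$ one records the finite set $D_i\cap\TT^1$, chooses nested arcs $\alpha_i^j$ in $D_i$ from these points to $\bdy D_i$, and uses them to factor the single honest compression along $D_i$ into a chain of $n_i$ dishonest compressions (each disjoint from $\TT^1$) followed by one honest compression disjoint from $\TT^1$; see Figure~\ref{f:SurfaceSequence}. These intermediate surfaces populate a subdivided cube, and triangulating that cube yields the compressing $n$-sequence. Only after this refinement are all the local maps $\eta_v$ genuinely valued in $\plex{\CC\BB^r(H_v,\TT^1)}^{(1)}$. Once you have this construction in hand, your last two paragraphs go through verbatim.
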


\begin{proof}
By assumption there is a homotopically non-trivial map $\iota$ from an $(n-1)$-sphere $S$ into $\plex{\CC \BB^r(H)}$. When $n \ne 2$, it is a consequence of the Hurewicz Theorem that this map is homologically non-trivial. In any case, our goal is to construct a compressing $n$-cycle that spans $S$. It then follows from Lemmas \ref{l:ThickExists} and \ref{l:ThickGivesResult} that a minimal such compressing $n$-sequence has a vertex $v$ such that $H_v$ is isotopic to $H$ in $M$ and the homotopy index of $\plex{\CC \BB^r(H_v,\TT^1)}$ is at most $n$. The result thus follows. 

Let $\Phi'$ denote a triangulation of $S$ so that the map $\iota$ is simplicial. Let $B$ denote the $n$-ball obtained by coning $S$ to a point $z$. By coning each simplex of $\Phi'$ to $z$ we thus get a triangulation $\Phi$ of $B$. 

We now turn the triangulation $\Phi$ into a cubing $\bar \Phi$ of $B$. Let $\sigma$ be an $(n-1)$-simplex of $\Phi'$ with vertices $\{v_i\}_{i=1}^n$. Let ${\bf e}_i$ denote an orthonormal basis for $\mathbb R^n$, and let $C$ denote the unit cube spanned by these vectors. Then we can identify the simplex $\sigma \ast z$ of $\Phi$ with $C$ by placing 
	\begin{itemize}
		\item $z$ at the origin, 
		\item $v_i$ at the corner of $C$ given by the vector ${\bf e}_i$, and
		\item the barycenter of each face $\sigma'$ of $\sigma$ at the corner of $C$ given by $\sum {\bf e}_i$, where the sum is taken over all $i$ such that $v_i$ is a vertex of $\sigma'$. 
	\end{itemize}
See Figure \ref{f:Simplex2Cube}. Note that the map $\iota$, which sends the simplices of $\Phi'=\Phi|S$ to simplices of $\plex{\CC \BB^r(H)}$, now extends naturally to a map that sends simplices of $\bar \Phi|S$ to simplices of $\plex{\CC \BB^r(H)}^{(1)}$. We continue to denote this map $\iota$. 

\begin{figure}
\psfrag{a}{$v_1$}
\psfrag{b}{$v_2$}
\psfrag{c}{$v_3$}
\psfrag{z}{$z$}
\psfrag{e}{${\bf e}_1$}
\psfrag{f}{${\bf e}_2$}
\psfrag{g}{${\bf e}_3$}
\psfrag{A}{(a)}
\psfrag{B}{(b)}
\psfrag{C}{(c)}
\begin{center}
\includegraphics[width=5 in]{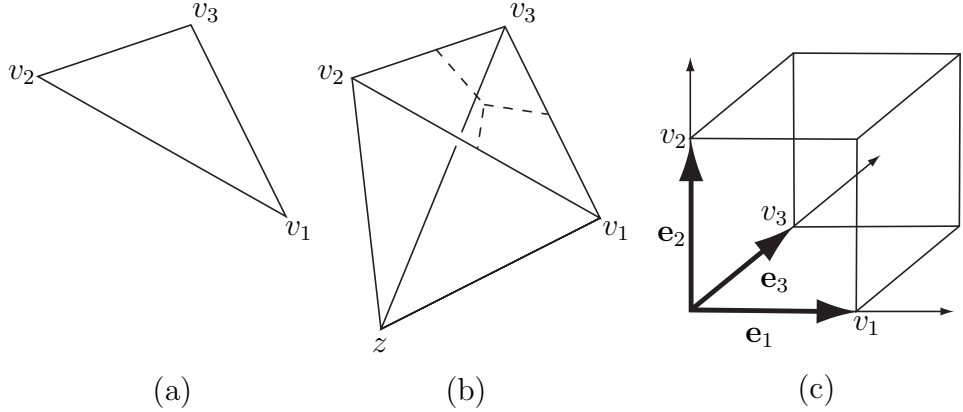}
\caption{(a) A simplex $\sigma$ of $\Phi'$. (b) The simplex $\sigma \ast z$ of $\Phi$. (c) Realizing $\sigma \ast z$ as a cube $C$ of $\bar \Phi$.}
\label{f:Simplex2Cube}
\end{center}
\end{figure}

We now subdivide the cubing $\bar \Phi$ to get a finer cubing of $B$, and associate a surface in $M$ with each vertex of this finer cubing. To begin, choose a representative disk for each vertex of $\iota(S)$, so that if $\tau$ is a simplex  in the image of $\iota$, then the chosen representatives of the vertices of $\tau$ are pairwise disjoint. For the cube $C$ above, let $D_i$ denote the chosen representative of $v_i$. For each $i$, let $n_i=|\TT^1 \cap D_i|$. In each disk $D_i$ choose a collection of arcs $\{\alpha _i^j\}_{j=1}^{n_i}$ such that
	\begin{enumerate}
		\item For each $i$ and $j$, $\alpha_i^j$ connects a point of $D_i \cap \TT^1$ to a point in $\bdy D_i$. 
		\item For each $j$ and $k$ where $j<k$, the arcs $\alpha_i^j$ and $\alpha_i^k$ should either be disjoint, or $\alpha _i^j \subset \alpha _i^k$. 
		\item If $D_i$ meets $\bdy M$, then there is a $j$ such that $\alpha_i^j \subset (D_i \cap \bdy M)$, and contains all of the points of $\bdy D_i \cap \TT^1$.
	\end{enumerate}
Finally, choose a small enough neighborhood $N(D_i)$ so that if $D_i \cap D_j=\emptyset$, then $N(D_i) \cap N(D_j)=\emptyset$, and let $H_i=H \cap N(D_i)$. 

Let $H^0_i=H_i$. For each $1 \le J \le n_i$, let $H_i^J$ denote the surface obtained from $H_i$ by simultaneous surgery along all of the components of $\bigcup \limits _{j=1}^J \alpha _i ^j$. Note that $H_i/D_i$ can then be obtained from $H_i^{n_i}$ by an honest compression or $\bdy$-compression, and for each $j \ge 1$ the surface $H^{j-1}_i$ can be obtained from $H^j_i$ by a dishonest compression or $\bdy$-compression. See Figure \ref{f:SurfaceSequence}.

\begin{figure}
\psfrag{E}{$D_i$}
\psfrag{A}{$H_i=H_i^0$}
\psfrag{B}{$H_i^1$}
\psfrag{C}{$H_i^2$}
\psfrag{D}{$H_i/D_i$}
\begin{center}
\includegraphics[width=5in]{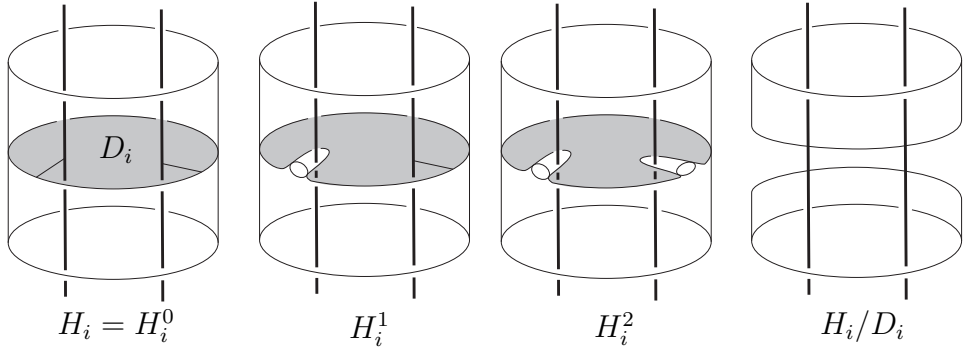}
\caption{The surfaces $H_i^j$ defined by a disk $D_i$.}
\label{f:SurfaceSequence}
\end{center}
\end{figure}

We now subdivide the cube $C$ of $\bar \Phi$ into smaller subcubes. For each $i$, cut $C$ by $n_i$ planes orthogonal to ${\bf e}_i$. Each vertex of this new cubing is then of the form
\[x=\sum \limits _{i=1}^n \frac{x(i)}{n_i+1} {\bf e}_i\]
where $0 \le x(i) \le n_i+1$ for each $i$. We associate to the vertex $x$ the surface $H_x$ in $M$ obtained from $H$ by replacing $H_i$ with 
	\begin{itemize}
		\item $H^{x(i)}_i$, if $x(i) \le n_i$
		\item $H_i/D_i$, if $x(i)=n_i+1$
	\end{itemize}
for each $i$. 

We now triangulate each of the subcubes of $C$. Suppose $x$ and $y$ are vertices of such a subcube $c$ that are connected by an edge. Then $x(i)$ and $y(i)$ differ by exactly one for one value $j$ of $i$, and are equal for every $i \ne j$. Hence, the surfaces $H_x$ and $H_y$ are identical away from $N(D_j)$. Since $x(j)$ and $y(j)$ differ by exactly one, it follows that one of $H^{x(j)}_j$ and $H^{y(j)}_j$ is obtained from the other by a compression in $N(D_j)$. (This compression will be honest if either $x(j)$ or $y(j)$ is $n_j+1$, and dishonest otherwise.) Furthermore, parallel edges of the subcube $c$ will also correspond to pairs of surfaces obtained by compressing along the same disk $D_j$. It follows that for the subcube $c$ there is a vertex $v$, represented by a surface $H_v$, and a collection $\mathcal D$ of honest and dishonest compressing disks for $H_v$, such that every other vertex is obtained from $H_v$ by simultaneous compression along some subcollection of $\mathcal D$. See Figure \ref{f:CubeAndSimplex}(a).

\begin{figure}
\psfrag{A}{$H_v$}
\psfrag{B}{$H_v/A$}
\psfrag{C}{$H_v/AB$}
\psfrag{D}{$H_v/ABC$}
\psfrag{G}{$H_v/B$}
\psfrag{E}{$H_v/C$}
\psfrag{H}{$H_v/AC$}
\psfrag{F}{$H_v/BC$}
\psfrag{a}{(a)}
\psfrag{b}{(b)}
\begin{center}
\includegraphics[width=5in]{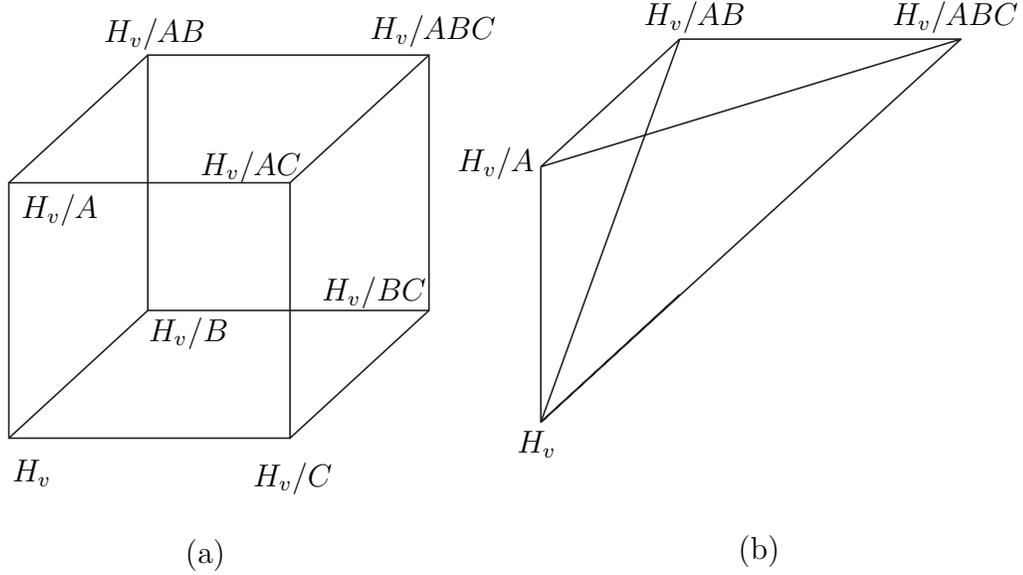}
\caption{(a) The subcube $c$, when $\mathcal D=\{A,B,C\}$. (b) The simplex in $c$ defined by the sequence $\mathcal D_0=\emptyset$, $\mathcal D_1=\{A\}$, $\mathcal D_2=\{A,B\}$, and $\mathcal D_3=\mathcal D$.}
\label{f:CubeAndSimplex}
\end{center}
\end{figure}

We now break up each subcube $c$ of $C$ into a collection of $n$-simplices, thereby obtaining a triangulation of $B$. Let $\{\mathcal D_i\}_{i=0}^n$ denote a sequence of subsets of $\mathcal D$, such that
	\begin{enumerate}
		\item $\mathcal D_n=\mathcal D$
		\item $\mathcal D_i$ is obtained from $\mathcal D_{i+1}$ by removing one disk.
	\end{enumerate}
Thus $\mathcal D_0=\emptyset$ and for each $i \ge 1$,  $\mathcal D_i$ is a collection with precisely $i$ disks. Any such sequence $\{\mathcal D_i\}$ defines a simplex in $c$ spanned by the vertices of $c$ associated to the surfaces $H_v/\mathcal D_i$. See Figure \ref{f:CubeAndSimplex}(b). 

To show that the simplicial complex thus defined, with the given associated surfaces for each vertex, is a compressing $n$-sequence spanning $S$ we make a few observations. First, note that by construction, every vertex besides those in $\bdy B$ are isotopic in $M$ to $H$. Thus, $\Sigma_0$ is everything besides $\bdy B=S$. The original map $\iota$ which sent $S$ into $\plex{\CC \BB^r(H)}$ has been refined by our construction to be a map from $S^{(1)}$ into $\plex{\CC \BB^r(H)}^{(1)}$, and this is precisely the map required by the global condition of Definition \ref{d:CompressingSequence}. 

Now consider a vertex $v$ of $\Sigma_0$. The surface $H_v$ is associated to a surface obtained from $H$ by surgering along some collection of arcs. For each vertex $w$ in $\mbox{link}(v)^-$, the surface $H_w$ is obtained from $H$ by surgering along a subcollection of these arcs, and possibly along some honest compressing disks for $H_v$. In other words, $H_w$ is obtained from $H_v$ by a collection of honest and dishonest compressing disks. We may thus define $\eta_v(w)$ to be the barycenter of the simplex of $\plex{\CC \BB^r(H_v,\TT^1)}^{(1)}$ spanned by these compressing disks. Compatibility with the map $\iota$ then follows by construction. 
\end{proof}

\section{Stage 3: $\plex{\CC \BB^r(H,\TT^1)} \rightarrow \plex{\CC \EE(H,\TT^1)}$}
\label{s:Stage3}

\begin{thm}
\label{t:CE^r(H,T^1)}
Let $H$ be a surface in $M$ such that $\ind \plex{\CC \BB^r(H,\TT^1)}$ is well defined. Then either $\ind\plex{\CC \EE(H, \TT^1)} \le \ind \plex{\CC \BB^r(H,\TT^1)}$, or there is a collection $\DD$ of interior edge-compressing disks for $H$ such that 
\[\ind \plex{\CC \BB^r(H \slash \DD,\TT^1)} \le \ind \plex{\CC \BB^r(H,\TT^1)} -|\DD|+1.\]
\end{thm}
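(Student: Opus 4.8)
The plan is to mimic the proof of Theorem~\ref{t:CB^r(H)}, substituting the pair $X = \plex{\CC \BB^r(H,\TT^1)}$, $Y = \plex{\CC \EE(H,\TT^1)}$ into Lemma~\ref{l:generalization}. Indeed $\BB^r$ and $\EE$ are both built from arcs, and since $\EE(H,\TT^1)$ records exactly the edge-compressing disks that are not already (real) $\bdy$-compressions, the complex $\plex{\CC \BB^r(H,\TT^1)}$ sits as a subcomplex of $\plex{\CC \EE(H,\TT^1)}$ in the obvious way. Applying Lemma~\ref{l:generalization} with $n = \ind\plex{\CC \BB^r(H,\TT^1)}$ yields two cases: either $\ind\plex{\CC \EE(H,\TT^1)} \le n$, which is the first alternative of the theorem, or there is a simplex $\tau \subset \plex{\EE(H,\TT^1)} \setminus X$ such that the subcomplex $Z$ spanned by the vertices of $X$ adjacent to every vertex of $\tau$ has homotopy index at most $n - \dim(\tau)$. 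Letting $\DD$ be a pairwise-disjoint set of interior edge-compressing disks representing the vertices of $\tau$, we have $\dim(\tau) = |\DD| - 1$, so it remains to identify $Z$ with $\plex{\CC \BB^r(H \slash \DD, \TT^1)}$, which gives the inequality $\ind\plex{\CC \BB^r(H \slash \DD,\TT^1)} \le n - |\DD| + 1$.

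First I would check that $\DD$ consists only of \emph{interior} edge-compressions: if any $D \in \DD$ were incident to an edge of $\bdy M$, then (via the identification of $\EE(H,\TT^1)$ with a subset of $\BB^r$ noted after the definition of edge-compressing disks, and using incompressibility of $\bdy M$) one could argue $D$ is equivalent to an element already in $X$, contradicting $\tau \cap X = \emptyset$; alternatively one restricts attention from the start to the interior edge-compressing subcomplex, which is the one implicitly appearing in the flow chart. Then the core identification: the vertices of $Z$ are the elements of $\CC \BB^r(H,\TT^1)$ disjoint (in $M - N(\TT^1)$) from every disk of $\DD$. After performing the surgery that produces $H \slash \DD$ — which caps off the essential loops $H/\DD$ makes on each $\bdy N(e)$ with meridian disks in $N(\TT^1)$ — one checks that a disk disjoint from all of $\DD$ survives as an element of $\CC \BB^r(H \slash \DD, \TT^1)$, and conversely, using that the capping disks are isotopic into $N(\TT^1)$ and hence avoidable, that every element of $\CC \BB^r(H \slash \DD, \TT^1)$ arises this way. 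This is the same bookkeeping as in Theorem~\ref{t:CB^r(H)}: one rules out new compressions appearing only because a component of the surgered surface has become inessential or boundary-inessential, using irreducibility of $M$ and incompressibility of $\bdy M$, and one uses non-peripherality of $H$ to ensure $H \slash \DD \ne \emptyset$.

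I expect the main obstacle to be precisely the verification that $Z = \plex{\CC \BB^r(H \slash \DD, \TT^1)}$ rather than merely $Z = \plex{\CC \BB^r(H/\DD, \TT^1)}$ — that is, controlling the effect of the capping-off operation $\slash$ versus the raw surgery $/$. One must confirm that the hypothesis of the $\slash$ construction is met, namely that $H/\DD$ meets each $\bdy N(e)$ in essential loops; here the minimality of $|\bdy H \cap \TT^1|$ (in force throughout Stage~2 and 3) and an innermost-loop argument should force any inessential intersection loop with $\bdy N(e)$ to yield a reduction in $|\bdy H \cap \TT^1|$ or an isotopy simplifying $\DD$. Granting that, the capping disks lie in $N(\TT^1)$, disjoint from any disk that was disjoint from $\DD$ in the complement of $N(\TT^1)$, so the vertex sets of $Z$ and of $\plex{\CC \BB^r(H \slash \DD, \TT^1)}$ coincide and the higher-simplex (disjointness) structure is manifestly the same. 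A secondary point to handle carefully is that, just as in the previous theorem, $\plex{\CC \BB^r(H/\DD,\TT^1)}$ could be contractible for silly reasons (a boundary-parallel compressing disk), but the hypothesis that the relevant index is well-defined, together with the non-peripherality of $H$, excludes this.
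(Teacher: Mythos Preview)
Your overall strategy---apply Lemma~\ref{l:generalization} and identify the resulting link complex $Z$ with $\plex{\CC\BB^r(H\slash\DD,\TT^1)}$---is the same as the paper's. But two of your supporting steps do not work as written.

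First, your embedding of $X$ in $Y$ is set up incorrectly. A real $\bdy$-compressing disk has an arc on $\bdy M$, while an edge-compressing disk has an arc on an edge of $\TT^1$; these are different objects, and your claim that ``$\EE(H,\TT^1)$ records exactly the edge-compressing disks that are not already (real) $\bdy$-compressions'' is false. The paper instead uses that $\bdy H$ is normal in $\bdy M$: each $B\in\BB^r(H,\TT^1)$ can be isotoped so that its $\bdy M$-arc lies along a boundary edge of $\TT^1$, turning it into a boundary edge-compression. One then takes $X$ to be $\plex{\CC(H,\TT^1)}$ together with the subcomplex of $\plex{\EE(H,\TT^1)}$ spanned by those disks isotopic to elements of $\BB^r(H,\TT^1)$; this $X$ is homeomorphic to $\plex{\CC\BB^r(H,\TT^1)}$ and is a genuine subcomplex of $Y=\plex{\CC\EE(H,\TT^1)}$. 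With this choice, the simplex $\tau\subset Y\setminus X$ produced by Lemma~\ref{l:generalization} automatically consists of \emph{interior} edge-compressions, so no separate argument is needed. (The identification you cite from after the definition of edge-compressing disks is $\EE(H,\TT^1)\subset\BB^r(H-N(\TT^1),\emptyset)$, which is a different statement and does not do this job.)

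Second, and more seriously, your argument for why $H\slash\DD$ is well defined is wrong. You propose to use minimality of $|\bdy H\cap\TT^1|$, but that quantity lives on $\bdy M$ and has nothing to do with whether $H/\DD$ meets $\bdy N(e)$ in essential loops for an \emph{interior} edge $e$. The paper's argument is the one you half-mention at the end as a ``secondary point'': if some component of $H/\DD\cap\bdy N(e)$ were inessential, an innermost such loop would bound a subdisk of $\bdy N(e)$ which, pushed slightly off $\bdy N(\TT^1)$, is a compressing disk for the surgered surface. Because it is isotopic into $\bdy N(\TT^1)$ it can be made disjoint from every element of $Z$, so $Z$ would be a cone on this vertex and hence contractible---contradicting that Lemma~\ref{l:generalization} handed you $Z$ with a well-defined homotopy index. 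This is not a side remark; it is precisely the mechanism that makes the $\slash$ construction go through.
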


\begin{proof}
Since $\bdy H$ is normal, every element of $\BB^r(H,\TT^1)$ is isotopic to at least one element of $\EE(H,\TT^1)$. Let $X$ denote the subcomplex of $\EE(H,\TT^1)$ spanned by those disks that are isotopic to elements of $\BB^r(H,\TT^1)$, together with the complex $\plex{\CC(H,\TT^1)}$. Then $X$ is homotopy equivalent (in fact, homeomorphic) to $\plex{\CC \BB^r(H,\TT^1)}$. If we set $Y=\plex{\CC \EE(H,\TT^1)}$, then $X \subset Y$. We may thus apply Lemma \ref{l:generalization}. The conclusion of this lemma gives us two possibilities. The first is that $\ind \plex{\CC \EE(H,\TT^1)} \le  \ind \plex{\CC \BB^r(H,\TT^1)}$, and thus we immediately obtain the desired result. The second possibility is that there is a simplex $\tau$ spanned by edge-compressing disks incident to interior edges, such that the homotopy index of the complex $Z$ spanned by 
\[\{x \in \plex{\CC \BB^r(H,\TT^1)}|\forall y \in \tau, \ x\mbox{ is adjacent to }y\}\] 
is at most $\ind \plex{\CC \BB^r(H,\TT^1)}-{\rm dim}(\tau)$. 

Let $\DD$ denote a collection of disks representing the vertices of $\tau$ which is pairwise disjoint in the complement of $N(\TT^1)$. Then ${\rm dim}(\tau)=|\DD|-1$. Let $H'$ denote the surface in $M-N(\TT^1)$ obtained from $H-N(\TT^1)$ by simultaneous surgery along the disks in $\DD$. Each element of $Z$ can then be identified with a compressing or real $\bdy$-compressing disk for $H'$. We  claim that $H \slash \DD$ is well-defined, and thus $Z=\plex{\CC \BB^r(H \slash \DD,\TT^1)}$. The desired result follows. 

To show $H \slash \DD$ is well-defined, we must prove that for every interior edge $e$ of $\TT^1$, every component of $H' \cap \bdy N(e)$ is essential. If not, then such a boundary component that is innermost on $\bdy N(e)$ bounds a subdisk $C$ of $\bdy N(e)$ that is isotopic to a compressing disk for $H'$. But, as this disk is isotopic into $\bdy N(\TT^1)$, it can be made disjoint from every other disk in $Z$. Thus, $Z$ would be a contractible complex, a contradiction. 
\end{proof}

\section{Stage 4: $\plex{\CC \EE(H,\TT^1)} \rightarrow \plex{\CC \EE_{\TT^2}(H,\TT^1)}$}

\begin{dfn}
Suppose $E \in \CC\EE(H,\TT^1)$. We say $E'$ is a {\it shadow} of $E$ if $\bdy E'=\bdy E$, and $E'$ is disjoint from $\TT^2$ away from its boundary. (The interior of a shadow $E'$ may intersect $H$.)  We define the set $\CC \EE_{\TT^2}(H,\TT^1)$ to be the subset of $\CC\EE(H,\TT^1)$ consisting of those disks that have shadows. 
\end{dfn}

\begin{thm}{\rm [cf. \cite{TopIndexI}, Theorem 3.2.]}
\label{t:SecondTransitionTheorem}
Suppose the homotopy index of $\plex{\CC\EE(H,\TT^1)}$ is $n$. Then $H$ may be isotoped (rel $\TT^1$) so that 
	\begin{enumerate}
		\item $H$ meets the 2-simplices of $\TT^2$ in $p$ points of tangency, for some $p \le n$. Away from these tangencies $H$ is transverse to $\TT^2$. 
		\item The complex $\plex{\CC\EE_{\TT^2}(H,\TT^1)}$ has homotopy index $i \le n-p$.
	\end{enumerate}
\end{thm}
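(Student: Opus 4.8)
The plan is to run exactly the same ``global-to-local'' machinery that produced Theorem \ref{t:GammaBB(H)disconnectedToGammaBB(H1)disconnected}, but now pushing $H$ across the $2$-skeleton instead of the $1$-skeleton, and tracking tangency points of $H$ with the $2$-simplices of $\TT^2$ as the analogue of intersection points with $\TT^1$. Concretely: set $X = \plex{\CC\EE(H,\TT^1)}$, which by hypothesis has homotopy index $n$, so there is a homotopically nontrivial map $\iota$ of an $(n-1)$-sphere $S$ into $X$ (homologically nontrivial as well when $n \ne 2$, by Hurewicz). I would then build a compressing $n$-sequence $\Sigma$ spanning $S$ whose vertices are associated to surfaces obtained from $H$ by isotopies rel $\TT^1$, together with the edge-compressions and (fake-)$\bdy$-compressions relevant to $\CC\EE_{\TT^2}(H,\TT^1)$, so that applying Lemmas \ref{l:ThickExists} and \ref{l:ThickGivesResult} yields a plateau vertex $v$ with $H_v$ isotopic rel $\TT^1$ to $H$ and $\ind\plex{\CC\EE_{\TT^2}(H_v,\TT^1)} \le n - (\text{number of tangencies created})$. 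The isotopy realizing a given edge-compressing or shadow disk near $\TT^2$ is precisely an isotopy that pushes a piece of $H$ across a $2$-simplex, creating (or removing) one or two points of tangency with $\TT^2$; this is the role played in Stage 2 by surgery along the arcs $\alpha_i^j$ that pushed $H$ across a vertex of $\TT^1$.

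The detailed construction mirrors the proof of Theorem \ref{t:GammaBB(H)disconnectedToGammaBB(H1)disconnected}. First triangulate $S$ so $\iota$ is simplicial, cone to a ball $B$, convert the triangulation of $B$ to a cubing $\bar\Phi$ via the simplex-to-cube device, so that at each cube the chosen representative disks $D_1,\dots,D_n$ of the vertices are pairwise disjoint. For each $D_i$, the obstruction to $D_i$ lying in $\CC\EE_{\TT^2}$ is that $D_i$ (or its shadow) is blocked by the $2$-skeleton; I would choose in $D_i$ a system of arcs running from points of $D_i \cap \TT^2$ out to $\bdy D_i$, nested exactly as in conditions (1)--(3) there, and use them to define a sequence of surfaces $H_i = H_i^0, H_i^1, \dots, H_i^{n_i}$, each obtained from the previous by an isotopy of $H$ across one $2$-simplex (a ``dishonest'' move, creating one tangency), with $H_i / D_i$ obtained from $H_i^{n_i}$ by an honest edge-compression or shadow-compression. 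Subdividing each cube by $n_i$ parallel planes orthogonal to ${\bf e}_i$, associating $H_x$ to each lattice vertex $x$ exactly as in the earlier proof, and triangulating via all monotone sequences $\mathcal D_0 \subset \cdots \subset \mathcal D_n$, gives the compressing $n$-sequence. One then verifies the global, local, and compatibility conditions of Definition \ref{d:CompressingSequence} verbatim, using the analogues of Lemmas \ref{l:HonestAfterDishonest} and \ref{l:widthchange}: width here is $(-\chi, |\TT^2 \cap H|)$ counted appropriately (or better, the number of tangency points), and both honest and ``push across $\TT^2$'' moves strictly decrease it. Minimizing the size of such a sequence and invoking Lemma \ref{l:ThickGivesResult} produces the plateau vertex $v$; the number of tangencies of $H_v$ with $\TT^2$ is the number $p$ of ``dishonest'' push-across moves accumulated along the path to $v$, and $\ind\plex{\CC\EE_{\TT^2}(H_v,\TT^1)} \le n - p$, which is conclusions (1) and (2) after isotoping $H$ to $H_v$.

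The main obstacle I expect is setting up the right ``dishonest disk'' bookkeeping near $\TT^2$: unlike the $\TT^1$ case, where pushing across a vertex of $\TT^1$ is an unambiguous local move, pushing a piece of $H$ across a $2$-simplex can be done along a disk whose interior meets $H$ (a shadow), so I must carefully define which disks in $\CC\EE_{\TT^2}(H,\TT^1) \setminus \CC\EE(H,\TT^1)$ count as dishonest, check that surgering (isotoping) along one of them creates exactly one tangency and decreases width, and—crucially—reprove the analogue of Lemma \ref{l:HonestAfterDishonest}, namely that a disk disjoint from a dishonest one survives to the new surface. A secondary subtlety is ensuring the tangencies are generic (Morse, isolated) so that ``$p$ points of tangency, transverse elsewhere'' is the literal outcome; this should follow from a general-position choice of the push-across isotopies, but it needs to be stated. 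Once these local lemmas are in place, the global homotopy-theoretic argument is identical to Stage 2 and carries over with only notational changes.
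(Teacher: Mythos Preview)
Your adaptation of the Stage~2 machinery does not transfer, because the inclusion between the two complexes runs the wrong way. In Stage~2 the sphere lives in the global complex $\plex{\CC\BB^r(H)}$ and one seeks an index bound for the local complex $\plex{\CC\BB^r(H_v,\TT^1)}$; the ``dishonest'' disks are those in the local complex but not the global one, and surgering along them yields surfaces isotopic to $H$ in $M$, which is precisely what makes $\Sigma_0$ large and the plateau argument work. Here the sphere lives in $\plex{\CC\EE(H,\TT^1)}$ while the target $\plex{\CC\EE_{\TT^2}(H,\TT^1)}$ is a \emph{subcomplex} of it, so no disk can play the dishonest role: every element of $\CC\EE_{\TT^2}(H,\TT^1)$ is already in $\CC\EE(H,\TT^1)$, and surgering along any of these genuinely changes $H$ rel $\TT^1$. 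With no dishonest disks there is no nontrivial $\Sigma_0$ and no plateau to locate. Your proposed width ``number of tangencies'' also cannot serve: generic surfaces have zero tangencies with $\TT^2$, so this quantity vanishes at every vertex of a discrete sequence, and the arcs you propose to draw in $D_i$ have nowhere to start, since $D_i \cap \TT^2$ is one-dimensional rather than a finite set of points.

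The paper argues instead by contradiction, in the spirit of Lemma~\ref{l:generalization} rather than Theorem~\ref{t:GammaBB(H)disconnectedToGammaBB(H1)disconnected}. One builds a \emph{continuous} family $\{H_x\}_{x\in B}$ of surfaces isotopic to $H$ rel $\TT^1$, obtained by shrinking each chosen disk $D_i$ down toward $\TT^1$; the instants at which some $H_x$ becomes tangent to a $2$-simplex cut $B$ into a cubing $\Omega$, and a point in a codimension-$p$ cell corresponds to a surface with at most $p$ tangencies. Setting $V_\tau=\plex{\CC\EE_{\TT^2}(H_x,\TT^1)}$ for $x$ interior to $\tau$, the negation of the theorem says each $V_\tau$ has trivial $\pi_i$ for $i\le\dim(\tau)-1$; since $V_\sigma\subset V_\tau$ whenever $\sigma\subset\bdy\tau$, one extends the original sphere map over $B$ cell by cell, contradicting its nontriviality. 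The tangency count $p$ thus enters as the codimension of a cell of $\Omega$, not as something accumulated along a discrete path, which is why it has no counterpart in your framework.
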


\begin{proof}
When $\ind \plex{\CC\EE(H,\TT^1)}=0$ the result is immediate, as $\plex{\CC\EE_{\TT^2}(H,\TT^1)} \subset \plex{\CC\EE(H,\TT^1)}$. We will assume, then, that $\ind \plex{\CC\EE(H,\TT^1)} = n \ge 1$. It follows that $\pi_{n-1}(\plex{\CC\EE(H,\TT^1)}) \ne 1$, and thus there is a map $\iota:S \to \plex{\CC\EE(H,\TT^1)}$ of an $(n-1)$-sphere $S$ into the $(n-1)$-skeleton of $\plex{\CC\EE(H,\TT^1)}$ which is not homotopic to a point. Let $B$ be the cone on $S$ to a point $z$. (The point $z$ is necessarily not in $\plex{\CC\EE(H,\TT^1)}$.) Hence, $B$ is an $n$-ball. 

Our first challenge is to define a continuous family of surfaces $H_x$ in $M$ isotopic to $H$, where $x \in B$.  Let $\Sigma$ be a triangulation of $S=\bdy B$ so that the map $\iota$ is simplicial. Let $\{v_i\}$ denote the set of vertices of $\plex{\CC\EE(H,\TT^1)}$ that are contained in $\iota(S)$. For each $i$ choose a representative $D_i$ from the equivalence class of disks represented by $v_i$ so that if $(v_i,v_j)$ is an edge of $\plex{\CC\EE(H,\TT^1)}$, then $D_i \cap D_j=\emptyset$.

For each $i$, let $N_i$ be a neighborhood of $D_i$ in $M-N(\TT^1)$. Assume $N_i$ has been chosen to be small enough so that $N_i \cap N_j=\emptyset$ whenever $(v_i,v_j)$ is an edge of $\plex{\CC\EE(H,\TT^1)}$. Let  $H_i(t)$ be a family of surfaces in $N_i$ so that $H_i(0)=H \cap N_i$, and $H_i(1)$ is obtained from $H \cap N_i$ by shrinking the disk $D_i$ until it meets $\TT^2$ in at most a subarc of an edge of $\TT^1$.

Extend $\Sigma$ to a triangulation $\Sigma'$ on $B$ by coning each simplex of $\Sigma$ to the point $z$. Suppose $\{D_0,...,D_{n-1}\}$ is the image of an $(n-1)$-simplex $\Delta$ of $\Sigma$ under the map $\iota$. We now identify the $n$-simplex of $\Sigma'$ which is the cone on $\Delta$ with the unit cube in $\mathbb R^n$. Label the axes of $\mathbb R^n$ with the variables $t_0, ..., t_{n-1}$. Place $z$ at the origin, and the vertex $v$ of $\Delta$ such that $\iota(v)=D_i$ at the point with $t_i=1$ and $t_j=0$ for all $j \ne i$. If $p$ is at the barycenter of a face $\sigma$ of $\Delta$ then place it at the vertex of the cube where the coordinates corresponding to the vertices of $\sigma$ are $1$ and the other coordinates are 0. We now linearly extend over the entire simplex to complete the identification with the cube. Now, if $x$ is in this $n$-simplex then $x$ has coordinates $(t_0(x),...,t_{n-1}(x))$. Let $H_x$ be the surface obtained from $H$ by replacing $H \cap N_i$ with the surface $H_i(t_i(x))$, for each $i$ between $0$ and $n-1$. See Figure \ref{f:HxDefn}. Repeating this for each $n$-simplex of $\Sigma'$ gives us the complete family of surfaces $H_x$. 

\begin{figure}
\psfrag{a}{$D_0$}
\psfrag{b}{$D_1$}
\psfrag{c}{$D_2$}
\psfrag{A}{$t_0$}
\psfrag{B}{$t_1$}
\psfrag{C}{$t_2$}
\psfrag{z}{$z$}
\psfrag{H}{$H$}
\psfrag{S}{$S$}
\begin{center}
\includegraphics[width=5 in]{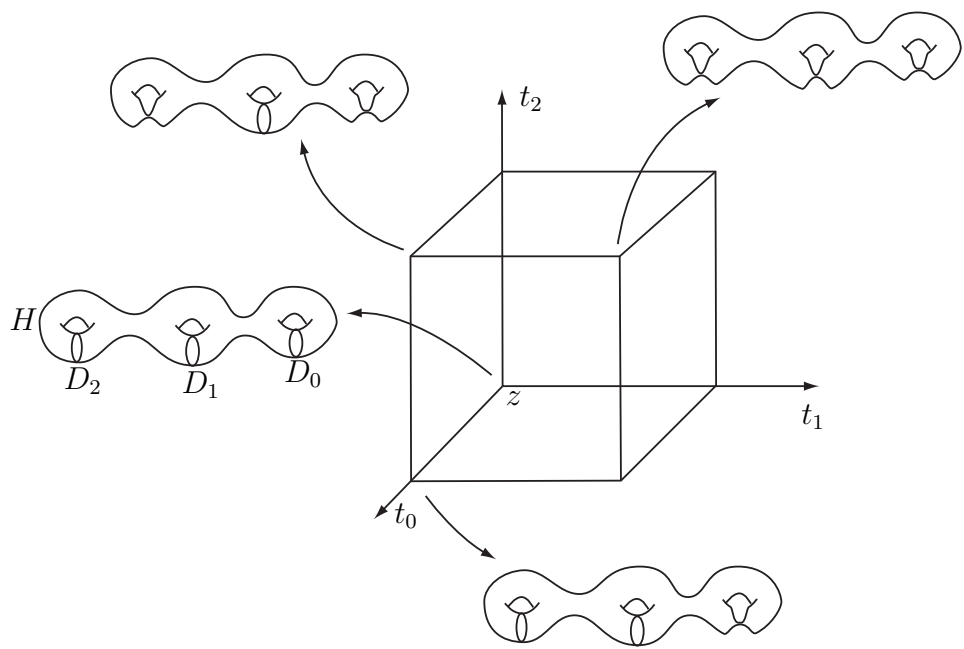}
\caption{A simplex $\Delta$ of $\Sigma'$, and a few of the surfaces $H_x$ for $x \in \Delta$. The union of the faces of the cube that do not meet $z$ is a simplex of $T$.}
\label{f:HxDefn}
\end{center}
\end{figure}

We assume $H$ is initially transverse to $\TT^2$. (That is, small perturbations of $H$ do not change $\plex{\CC\EE_{\TT^2}(H,\TT^1)}$.) For each $i$, the surface $H_i(t) \subset N_i$ is tangent to $\TT^2$ for finitely many values $\{t_i^j\}$ of $t$. Hence, for each $x \in B$ the surface $H_x$ is tangent to $\TT^2$ at finitely points, and each such point is in a distinct ball $N_i$. Note also that if $t_i(x)=t_i(y)$, then $H_x$ and $H_y$ agree inside of $N_i$. Hence, if $H_x$ is tangent to $\TT^2$ in $N_i$ then the surface $H_y$ will also be tangent to $\TT^2$, for all $y$ in the plane where $t_i(y)=t_i(x)$. It follows that each $n$-simplex of $\Sigma'$ is cubed by the points $x$ where $H_x$ is tangent to $\TT^2$. See Figure \ref{f:Cubing}. Hence, $B$ is cubed by the $n$-simplices of $\Sigma'$, together with this cubing of each such simplex. We denote this cubing of $B$ as $\Omega$. It follows that if $x$ is in a codimension $p$ cell of $\Omega$ then the surface $H_x$ is tangent to $\TT^2$ in at most $p$ points. 

\begin{figure}
\psfrag{A}{$t_0$}
\psfrag{B}{$t_1$}
\psfrag{C}{$t_2$}
\psfrag{t}{$\{t_0^j\}$}
\begin{center}
\includegraphics[width=3 in]{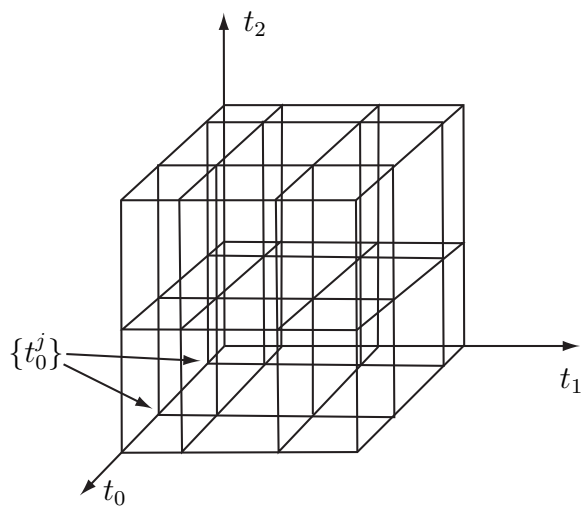}
\caption{A simplex $\Delta$ of $\Sigma'$ is cut up by planes into subcubes. Each such plane is determined by the points $x$ in which $H_x$ is tangent to $\TT^2$ in $N_i$, for some $i$.}
\label{f:Cubing}
\end{center}
\end{figure}

We now produce a contradiction by defining a continuous map $\Psi$ from $B$ into $\plex{\CC\EE(H,\TT^1)}$ which maps $S=\bdy B$ onto $\iota(S)$. The map $\Psi |S$ will be equal to $\iota$ on the barycenters of the $(n-1)$-cells of $\Sigma$, which will in turn imply that $\Psi$ maps $S$ onto $\iota(S)$ with the same degree as $\iota$. A contradiction follows as $\iota(S)$ is not homotopic to a point. 

For each $x \in B$ let $V_x=\plex{\CC\EE_{\TT^2}(H_x,\TT^1)}$. If $\tau$ is a cell of $\Omega$, then we define $V_\tau$ to be the set $V_x$, for any choice of $x$ in the interior of $\tau$. Note that if $x$ and $y$ are in the interior of the same cell $\tau$ of $\Omega$, then the pair $(H_x,\TT^2)$ is isotopic to $(H_y,\TT^2)$. Hence, $V_x=V_y$, and thus $V_\tau$ is well defined. The map $\Psi$ defined below will take each cell $\tau$ of $\Omega$ into $V_\tau$. First, we establish a few properties of $V_\tau$. 

\begin{clm}
\label{c:subset}
Suppose $\sigma$ is a cell of $\Omega$ which lies on the boundary of a cell $\tau$. Then $V_\sigma \subset V_\tau$. 
\end{clm}

\begin{proof}
Pick $x \in \sigma$ and $y \in \tau$. If $D \in V_x$ then $D$ is some kind of compression for $H_x$ that can be isotoped to be disjoint from $\TT^2$.  To show $D \in V_y$ we must show that $D$ is some kind of compression for $H_y$ disjoint from $\TT^2$. Note that $H_y \cap \TT^2$ is obtained from $H_x \cap \TT^2$ by resolving some tangency. Hence, any loop or arc of $H_x \setminus \TT^2$ is isotopic to a loop or arc of $H_y \setminus \TT^2$. It follows that since $D \cap H_x$  was a collection of loops and arcs on $H_x$ disjoint from $\TT^2$, then $D \cap H_y$ will be a similar such collection. Hence, $D \in \plex{\CC\EE_{\TT^2}(H_y,\TT^1)}=V_y$. 
\end{proof}

\begin{clm}
\label{c:contradiction}
For each cell $\tau$ of $\Omega$, 
\[\pi_{i}(V_\tau) = 1 \mbox{ for all }i \le {\rm dim}(\tau)-1.\]
\end{clm}

\begin{proof}
Let $x$ be in the interior of a codimension $p$ cell $\tau$ of $\Omega$. Then the dimension ${\rm dim}(\tau)$  is $n-p$. The surface $H_x$ is tangent to $\TT^2$ in at most $p$ points, and is transverse to $H_x$ elsewhere. Recall $V_x=\plex{\CC\EE_{\TT^2}(H_x,\TT^1)}$. Thus, if the theorem is false then $V_x$  is non-empty, and $\pi_i(V_x)=1$ for all $i \le n-p-1={\rm dim}(\tau)-1$. 
\end{proof}

We now define $\Psi$ on the $0$-skeleton of $\Omega$. For each $0$-cell $x \in \Omega$, we will choose a point in $V_x$ to be $\Psi(x)$. If $x$ is in the interior of $B$ then $\Psi(x)$ may be chosen to be an arbitrary point of $V_x$. If $x$ is a point of $S=\bdy B$ then $x$ is contained in (perhaps more than one) $(n-1)$-simplex $\Delta_x$ of $\Sigma$. Let $\Delta'_x$ denote the face of $\Delta_x$ spanned by the vertices $v$ such that $t_i(v)=1$ if $t_i(x)=1$, and $t_i(v)=0$ otherwise. (Note that if $x$ was on the boundary of $\Delta_x$, so that it was also contained in some other $(n-1)$-simplex of $\Sigma$, then we still end up with the same simplex $\Delta_x'$ of $\Sigma$.) So, for example, if $x$ is at the barycenter of $\Delta_x$ then $\Delta '_x=\Delta _x$. By construction, for each vertex $v$ of $\Delta'_x$ the surface $H_x$ has a compression of some kind $D$ disjoint from $\TT^2$. Hence, for all $y$ near $x$ the disk $D$ is a compression of some kind for $H_y$ that is disjoint from $\TT^2$. It follows that the entire simplex $\iota(\Delta'_x)$ is contained in $V_x$, and thus we may choose the barycenter of $\iota(\Delta'_x)$ to be the image of $\Psi(x)$. In particular, if $x$ is the barycenter of $\Delta_x$ then $\Psi(x)=\iota(x)$. 

We now proceed to define the rest of the map $\Psi$ by induction. Let $\tau$ be a $d$-dimensional cell of $\Omega$. By induction,  assume $\Psi$ has been defined on the $(d-1)$-skeleton of $\Omega$. In particular, $\Psi$ has been defined on $\bdy \tau$. Suppose $\sigma$ is a face of $\tau$.  By Claim \ref{c:subset} $V_\sigma \subset V_\tau$. By assumption $\Psi|\sigma$ is defined and $\Psi(\sigma) \subset V_\sigma$. We conclude $\Psi(\sigma) \subset V_\tau$ for all $\sigma \subset \bdy \tau$, and thus
\begin{equation}
\label{e:boundary}
\Psi(\bdy \tau) \subset V_\tau.
\end{equation}
Since $d={\rm dim}(\tau)$ it follows from Claim \ref{c:contradiction} that $\pi_{(d-1)}(V_\tau)=1$. Since $d-1$ is the dimension of $\bdy \tau$, we can thus extend $\Psi$ to a map from $\tau$ into $V_\tau$. 

What remains to be shown is that if $\tau$ is in $S=\bdy B$ then the extension of $\Psi$ from $\bdy \tau$ to $\tau$ may be done so that $\Psi(\tau) \subset \iota(S)$. Let $\Delta_\tau$ be the simplex of $\Sigma$ whose interior contains $\tau$.
We need only show that $\Psi(\bdy \tau) \subset V_\tau \cap \iota(\Delta _\tau)$. Since $V_\tau \cap \iota(\Delta _\tau)$ will be a subsimplex of $\iota(\Delta_\tau)$, it follows that $\Psi$ can be extended over $\tau$ to this subsimplex. 

By Equation \ref{e:boundary}, $\Psi(\bdy \tau) \subset V_\tau$. So all we must do now is to show $\Psi(\bdy \tau) \subset \iota(\Delta _\tau)$. Let $\sigma$ denote a face of $\tau$, and $\Delta _\sigma$ the simplex of $\Sigma$ whose interior contains $\sigma$. Then $\Delta_\sigma$ is contained in $\Delta _\tau$. By induction we may assume $\Psi(\sigma) \subset \iota(\Delta _\sigma)$. Putting this together we conclude $\Psi(\sigma) \subset \iota(\Delta _\tau)$ for each $\sigma \subset \bdy \tau$, and thus $\Psi(\bdy \tau) \subset \iota(\Delta _\tau)$. 
\end{proof}

\section{Stage 5: $\plex{\CC\EE_{\TT^2}(H,\TT^1)} \rightarrow \plex{\CC\EE(H,\TT^2)}$}

\begin{dfn}
Let $M^1=M-N(\TT^1)$ and $H^1=H \cap M^1$.
\end{dfn}

\begin{lem}
\label{l:NonCompressionEffect}{\rm [cf. \cite{TopIndexI}, Lemma 3.6.]}
Suppose $\plex{\CC\EE_{\TT^2}(H,\TT^1)}$ has well-defined homotopy index.  Let $D \in \CC\EE(H,\TT^2) - \CC\EE(H,\TT^1)$. Then $\CC\EE_{\TT^2}(H/D,\TT^1)=\CC\EE_{\TT^2}(H,\TT^1)$.
\end{lem}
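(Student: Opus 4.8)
The plan is to show that the relevant disk sets coincide by analyzing exactly how surgering along $D$ changes $H$ away from $\TT^2$. The key point is that $D \in \CC\EE(H,\TT^2) - \CC\EE(H,\TT^1)$ means $\bdy D$ is an essential curve or arc on $H$ that is \emph{not} disjoint from $\TT^1$; in fact, since $D$ is a compression or edge-compression for $(H,\TT^2)$, its boundary is contained in $H \cup \TT^2$, so $D \cap \TT^1$ consists only of points of $\bdy D$ lying on edges of $\TT^1$. Thus $N(D)$ can be taken to be a ball meeting $N(\TT^1)$ only near these finitely many points, and away from $N(\TT^1)$ the surfaces $H$ and $H/D$ differ only inside this ball. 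First I would make this precise: choose $N(D)$ small enough that, outside a neighborhood of $\TT^1$, passing from $H$ to $H/D$ is supported in a single ball $B$ disjoint from $\TT^2$ (since $D$ has a shadow disjoint from $\TT^2$, we can arrange the surgery region to be disjoint from $\TT^2$ too).

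Next I would prove the two inclusions $\CC\EE_{\TT^2}(H/D,\TT^1) \subseteq \CC\EE_{\TT^2}(H,\TT^1)$ and the reverse. For the reverse inclusion, given $E \in \CC\EE_{\TT^2}(H,\TT^1)$, one can isotope $E$ (rel $\TT^2$, using its shadow) to be disjoint from the ball $B$ supporting the surgery, since $E$ lives away from $\TT^2$ and $B$ is a small ball disjoint from $\TT^2$; then $E$ meets $H$ and $H/D$ in the same curves/arcs, and essentiality is preserved — so $E \in \CC\EE_{\TT^2}(H/D,\TT^1)$. This uses that $H$ and $H/D$ are isotopic away from $B$ and that the shadow lets us keep $E$ disjoint from $\TT^2$ throughout. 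For the forward inclusion, given $E' \in \CC\EE_{\TT^2}(H/D,\TT^1)$, the same disjointness argument makes $E'$ disjoint from $B$, hence a disk for $(H,\TT^2)$ disjoint from $\TT^2$; one must then check its boundary is still essential on $H$ (not just on $H/D$) — an inessential loop or arc on $H$ that survives into $H/D$ would have to become essential, which a subdisk-chasing argument inside $M - B$ rules out, using that the surgery only adds the two copies of $D$ to $H$.

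The main obstacle I expect is the essentiality bookkeeping in the forward inclusion: a curve or arc $\alpha = E' \cap (H/D)$ that is essential on $H/D$ might a priori bound a subdisk or cut off a subdisk on $H$ — i.e., become inessential when we pass back to $H$. Ruling this out requires showing such a subdisk of $H$ cannot exist, or if it does, that $\alpha$ was really inessential on $H/D$ after all, contradicting $E' \in \CC\EE_{\TT^2}(H/D,\TT^1)$. The cleanest way is probably to argue that since $D$ is \emph{not} in $\CC\EE(H,\TT^1)$, $\bdy D$ is essential on $H$, so compressing along $D$ cannot create a new compressible region detectable away from $\TT^2$; combined with irreducibility of $M$ and incompressibility of $\bdy M$ (our standing hypotheses), any disk that becomes essential after surgery would have been essential before. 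I would also need to confirm that $H/D$ itself has well-defined index — but this follows since $\bdy D$ essential on $H$ means no component of $H/D$ lies in a ball, and the hypothesis that $\plex{\CC\EE_{\TT^2}(H,\TT^1)}$ has well-defined index transfers via the equality we are proving.
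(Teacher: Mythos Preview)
Your proposal rests on a misreading of the hypothesis $D \in \CC\EE(H,\TT^2) - \CC\EE(H,\TT^1)$. You interpret this as saying $\bdy D$ meets $\TT^1$; in fact it says the opposite. Since $\EE(H,\TT^2)\subset\EE(H,\TT^1)$, the disk $D$ must lie in $\CC(H,\TT^2)\setminus\CC(H,\TT^1)$: it is a compressing disk in $M-N(\TT^2)$ (so $D$ and $\bdy D$ are already disjoint from $\TT^1$), and the reason it fails to be in $\CC(H,\TT^1)$ is that $\bdy D$ is \emph{inessential} on $H^1=H-N(\TT^1)$. Thus $\bdy D$ bounds a subdisk $\mathcal D\subset H^1$ which necessarily meets $\TT^2$. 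Your statement ``since $D$ is not in $\CC\EE(H,\TT^1)$, $\bdy D$ is essential on $H$'' is exactly backwards, and the entire essentiality bookkeeping you outline collapses once this is corrected.

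With the correct picture, the inclusion $\CC\EE_{\TT^2}(H,\TT^1)\subset\CC\EE_{\TT^2}(H/D,\TT^1)$ is the delicate one, and your plan to ``isotope $E$ off the ball $B$'' does not address it. For $E\in\CC\EE_{\TT^2}(H,\TT^1)$, the curve $\bdy E$ may cross the subdisk $\mathcal D$; the paper uses an outermost-arc argument on $\mathcal D$ to reduce $|E\cap\mathcal D|$, and when the guiding subdisk $\mathcal D'$ meets $\TT^2$ the resulting disk loses its shadow. Repairing the shadow is where the hypothesis that $\plex{\CC\EE_{\TT^2}(H,\TT^1)}$ has well-defined homotopy index is actually used: if the repair fails one produces a disk $A\in\CC\EE_{\TT^2}(H,\TT^1)$ with $\bdy A\subset\bdy N(\TT^2)$, hence disjoint from every other element of the complex, forcing contractibility and a contradiction. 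Your plan never invokes this hypothesis and cannot succeed without it.
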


\begin{proof}
Note that $\EE(H,\TT^2) \subset \EE_{T^2}(H,\TT^1) \subset \EE (H,\TT^1)$. Thus, the disk $D$ must be an element of $\CC(H,\TT^2)$ that is not in $\CC(H,\TT^1)$. We conclude $\bdy D$ cuts off a subdisk $\mathcal D \subset H^1$ that meets $\TT^2$.

Let $M(H,\TT^2)$ and $N(D)$ be as given in Definition \ref{d:Compression}. Then $H/D$ is obtained from $H$ by removing $N(D) \cap H$ from $H$ and 
replacing it with the frontier $D_*$ of $N(D)$ in $M(H,\TT^2)$ (followed by removing any resulting disk or sphere components). 

\begin{clm}
$\CC\EE_{\TT^2}(H/D,\TT^1) \subset \CC\EE_{\TT^2}(H,\TT^1)$.
\end{clm}

\begin{proof}
Suppose $E \in \CC\EE_{\TT^2}(H/D,\TT^1)$. Then $\bdy E$ can be isotoped off of $D_*$. If $E$ now meets the ball $N(D)$ then it can be further isotoped so that $E \cap N(D)$ is a collection of disks parallel to $D$. But then each component of $E \cap N(D)$ can be swapped with a disk parallel to $\mathcal D$. The resulting disk has the same boundary as $E$, but is disjoint from $H$. By the irreducibility of $M^1$ this disk must therefore be properly isotopic to $E$. See Figure \ref{f:HtoH/Dpart2}. We conclude that $E$ was a compressing or edge-compressing disk for $H^1$ that persisted as a compressing or edge-compressing disk for $H^1/D$. $E$ is therefore a compressing or edge-compressing disk for $H^1$ that is disjoint from $D$.

\begin{figure}
\psfrag{D}{$D$}
\psfrag{1}{$\mathcal D$}
\psfrag{d}{$D_*$}
\psfrag{H}{$H$}
\psfrag{h}{$H/D$}
\psfrag{F}{$\TT^2$}
\psfrag{E}{$E$}
\psfrag{e}{$E'$}
\begin{center}
\includegraphics[width=4.5 in]{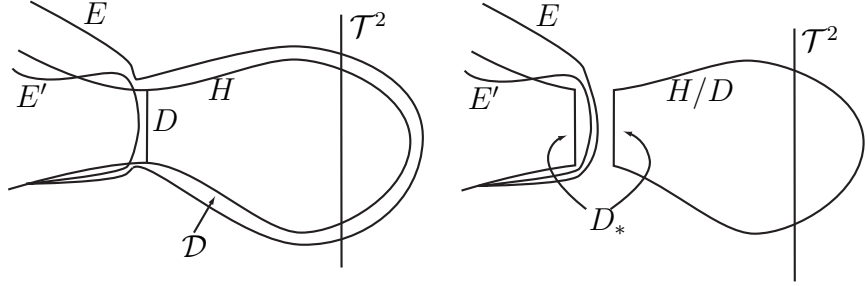}
\caption{Since $D$ is not a compressing disk for $H^1$, any compressing disk $E$ for $H^1/D$ (right figure) is always isotopic to a compressing disk for $H^1$ (left figure). If $E'$ is a shadow for $E$ as a compressing disk for $H^1/D$ (right figure), then $E'$ is a shadow for $E$ as a compressing disk for $H^1$ (left figure).}
\label{f:HtoH/Dpart2}
\end{center}
\end{figure}

Now let $E'$ be a shadow for $E$ as a compressing or edge-compressing disk for $H^1/D$. As $\bdy E'=\bdy E$, it follows that $\bdy E' \cap D_*=\emptyset$. So, if $E'$ meets the ball $N(D)$, then it meets it in disks parallel to $D$. The disk $E'$ thus meets $H$ in loops/arcs  isotopic to $E' \cap H/D$, together with loops/arcs  parallel to $D \cap H$. It follows that the interior of $E'$ meets $H$ in inessential loops/arcs, and thus $E'$ is a shadow for $E$ as a compressing or edge-compressing disk for $H^1$, i.e.~$E \in \CC\EE_{\TT^2}(H,\TT^1)$. See Figure \ref{f:HtoH/Dpart2}.
\end{proof}

\begin{clm}
$\CC\EE_{\TT^2}(H,\TT^1) \subset \CC\EE_{\TT^2}(H/D,\TT^1)$.
\end{clm}

\begin{proof}
Let $E$ now denote an element of $\CC\EE_{\TT^2}(H,\TT^1)$. Thus, $\bdy E \cap \TT^2=\emptyset$. We assume $E$ has been chosen so that $|E \cap \mathcal D|$ is minimal. First we suppose $E \cap \mathcal D=\emptyset$. If the interior of $E$ meets $D$ then we may surger it off by a standard innermost disk argument. So in this case we may assume $E \cap D=\emptyset$. Since $E$ is a compressing or edge-compressing disk for $H^1$ but $D$ is not, it now follows that $E$ is a compressing or edge-compressing disk for $H^1/D$. Any shadow for $E$ as a compressing or edge-compressing disk for $H^1$ will be a shadow for $E$ as a compressing or edge-compressing disk for $H^1/D$, and thus $E \in \plex{\CC\EE_{\TT^2}(H/D,\TT^1)}$.

Finally, we consider the case $E \cap \mathcal D \ne \emptyset$. Our goal is to isotope $E$ to a compressing or edge-compressing disk $E_0 \in \plex{\CC\EE_{\TT^2}(H,\TT^1)}$ such that $|E_0 \cap \mathcal D|<|E \cap \mathcal D|$, contradicting our minimality assumption.

Let $\gamma$ denote an arc of $\bdy E \cap \mathcal D$ that is outermost on $\mathcal D$. Then $\gamma$ cuts a disk $\mathcal D'$ off of $\mathcal D$ whose interior does not meet $E$. We can use the disk $\mathcal D'$ to guide an isotopy of both $E$ and its shadow $E'$ to a compressing disk $E_0$ for $H$ and a disk $E_0'$ with $\bdy E_0 =\bdy E_0'$. See Figure \ref{f:EtoE0}. Note that $|E_0 \cap \mathcal D|<|E \cap \mathcal D|$. If $\mathcal D' \cap \TT^2=\emptyset$, then it follows from the fact that $E'$ was a shadow of $E$ that $E_0'$ will be a shadow of $E_0$. Thus, $E_0 \in \plex{\CC\EE_{\TT^2}(H,\TT^1)}$ as desired.

\begin{figure}
\psfrag{D}{$\mathcal D$}
\psfrag{C}{$D$}
\psfrag{d}{$\mathcal D'$}
\psfrag{H}{$H$}
\psfrag{F}{$E_0$}
\psfrag{f}{$E'_0$}
\psfrag{E}{$E$}
\psfrag{e}{$E'$}
\begin{center}
\includegraphics[width=4.5 in]{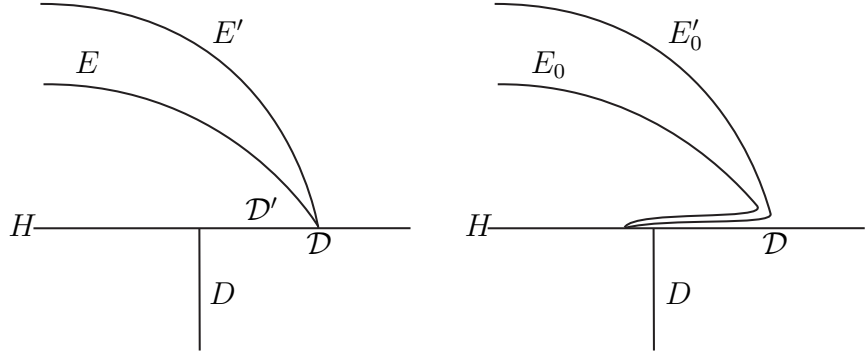}
\caption{Using the disk $\mathcal D'$ to obtain $E_0$ and $E_0'$ from $E$ and $E'$.}
\label{f:EtoE0}
\end{center}
\end{figure}

If $\mathcal D' \cap \TT^2 \ne \emptyset$ then the disk $E_0'$ will not be a shadow for $E_0$, since $E_0' \cap \TT^2 \ne \emptyset$. What remains then is to show that nonetheless, $E_0$ has a shadow.

Let $N(\TT^2)$ denote a small product neighborhood of $\TT^2$. Since $E_0' \cap \TT^2 \ne \emptyset$, it follows that $E_0' \cap \bdy N(\TT^2) \ne \emptyset$. Let $\delta$ denote a loop of $E_0' \cap \bdy N(\TT^2)$ that is outermost on $E_0'$. As $\TT^2$ is incompressible, $\delta$ bounds a subdisk $F_*$ of $\bdy N(\TT^2)$. See Figure \ref{f:E0toShadow}.

Although $\TT^2$ may not be transverse to $H$, the surface $\bdy N(\TT^2)$ will be. Thus, the disk $F_*$ meets $H$ in a collection of loops. We claim these loops are inessential on $H^1$, and thus $F_*$ can be used to surger $E_0'$ to a disk which meets $\TT^2$ fewer times. The new disk will meet $H$ more times, but each new intersection introduced will be inessential on $H^1$. Thus, by repeating this process we transform $E_0'$ to a shadow for $E_0$, as desired.

To obtain a contradiction, suppose at least one loop of $F_* \cap H$ is essential on $H^1$. Let $\alpha$ be a such loop that is innermost on $F_*$. The loop $\alpha$ bounds a subdisk $A'$ of $F_*$ whose interior may meet $H^1$ in inessential loops. See Figure \ref{f:E0toShadow}. We claim $A'$ is the shadow of a compressing disk $A$ for $H^1$, and thus $A \in \plex{\CC\EE_{\TT^2}(H,\TT^1)}$.

\begin{figure}
\psfrag{f}{$E_0'$}
\psfrag{C}{$D$}
\psfrag{D}{$\mathcal D$}
\psfrag{H}{$H$}
\psfrag{G}{$F_*$}
\psfrag{B}{$B$}
\psfrag{A}{$A'$}
\psfrag{d}{$\delta$}
\psfrag{a}{$\alpha$}
\psfrag{b}{$\beta$}
\psfrag{F}{$N(\TT^2)$}
\begin{center}
\includegraphics[width=4.5 in]{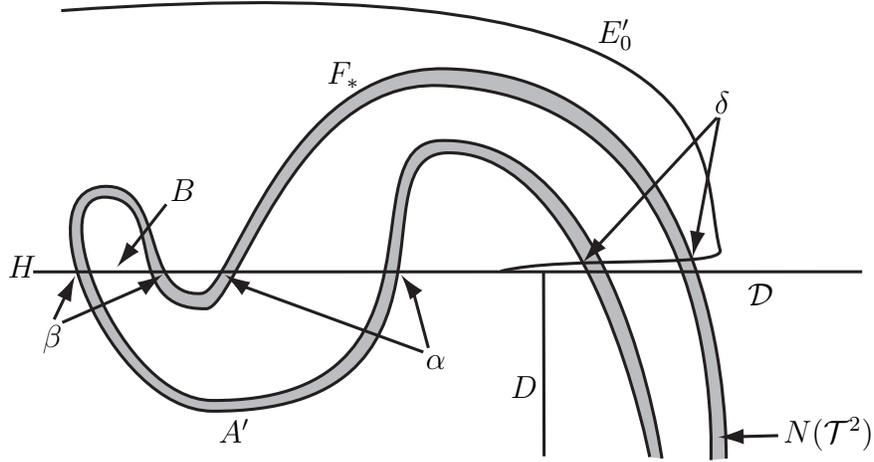}
\caption{The curves $\alpha$, $\beta$, and $\delta$, and the disks $A'$, $B$, and $F_*$.}
\label{f:E0toShadow}
\end{center}
\end{figure}

Let $\beta$ denote a loop of $A' \cap H$ that is innermost on $A'$. As $\beta$ is inessential on $H^1$ it bounds a subdisk $B$ of $H^1$. See Figure \ref{f:E0toShadow}. The disk $B$ can be used to surger $A'$, lowering $|A' \cap H|$. Continuing in this way we arrive at a disk $A$ with the same boundary as $A'$ but whose interior is disjoint from $H$. As $\bdy A'=\bdy A$ is essential on $H^1$, we conclude $A$ is a compressing disk for $H^1$. The disk $A'$ is then a shadow for $A$, and thus $A \in \plex{\CC\EE_{\TT^2}(H,\TT^1)}$.

Finally, suppose $X$ is any other element of $\plex{\CC\EE_{\TT^2}(H,\TT^1)}$. As $\bdy A =\bdy A' \subset F_* \subset \bdy N(\TT^2)$ and $\bdy X \cap \TT^2 =\emptyset$, it follows that $\bdy X \cap \bdy A=\emptyset$. By a standard innermost disk argument (and the irreducibility of $M^1$) we may isotope $X$ to remove any intersections of its interior with the interior of $A$. Thus, we may assume $A \cap X=\emptyset$. The disk $X$ is therefore connected to the disk $A$ by an edge in $\plex{\CC\EE_{\TT^2}(H,\TT^1)}$. As this holds for all disks $X \in \plex{\CC\EE_{\TT^2}(H,\TT^1)}$, we conclude $\plex{\CC\EE_{\TT^2}(H,\TT^1)}$ is contractible to $A$. As $\plex{\CC\EE_{\TT^2}(H,\TT^1)}$ is not contractible, we have reached a contradiction.
\end{proof}

The two claims complete the proof of Lemma \ref{l:NonCompressionEffect}.
\end{proof}

At the end of the previous section were were left with a surface $H$ with $\plex{\CC\EE_{\TT^2}(H,\TT^1)}$ having a well-defined homotopy index. In the next theorem we transition to a similar statement about $\plex{\CC\EE (H,\TT^2)}$.

\begin{thm}
\label{t:2skeleton}
Suppose $H$ is transverse to $\TT^2$ away from a finite number of points of tangency, and $\plex{\CC\EE_{\TT^2}(H,\TT^1)}$ has homotopy index $n$. Then $H$ may be isotoped so that any point of tangency is a saddle contained in $\TT^2$, and $\plex{\CC\EE(H,\TT^2)}$ has homotopy index $n$.
\end{thm}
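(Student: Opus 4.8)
The plan is to prove the statement in three stages: normalize the tangencies, arrange that no $\TT^2$--disjoint compressing disk is of a ``dishonest'' type relative to $\TT^1$, and finally show that $\plex{\CC\EE(H,\TT^2)}$ is homotopy equivalent to $\plex{\CC\EE_{\TT^2}(H,\TT^1)}$.

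\medskip
\noindent\emph{Stage 1: normalizing tangencies.} First note that because $H$ is transverse to $\TT^1$ it cannot be tangent to $\TT^2$ at a point of $\TT^1$: at $y\in H\cap\TT^1$ the plane $T_yH$ meets the line $T_ye$ transversally, so $T_yH$ cannot equal the tangent plane of a $2$--simplex containing $e$. Hence every tangency lies in the interior of a unique $2$--simplex. Now perturb $H$, in small balls disjoint from $N(\TT^1)$, so that all tangencies are nondegenerate; a nondegenerate center tangency cuts a small disk off $H$ that meets $\TT^2$ in the single tangent point, and pushing that disk off $\TT^2$ removes the tangency while strictly decreasing $|H\cap\TT^2|$. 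Each such move is supported in a ball meeting $\TT^2$ in a single disk and disjoint from $N(\TT^1)$, and a routine innermost--disk argument shows that it leaves $\plex{\CC\EE_{\TT^2}(H,\TT^1)}$ unchanged, so its homotopy index stays $n$. After finitely many moves every tangency is a saddle contained in a $2$--simplex.

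\medskip
\noindent\emph{Stage 2: honest $\TT^2$--disjoint disks.} Call $D\in\CC(H,\TT^2)$ \emph{bad} if $D\notin\CC(H,\TT^1)$; as in the proof of Lemma~\ref{l:NonCompressionEffect}, this forces $\bdy D$ to bound a subdisk $\mathcal D\subset H^1$ whose interior meets $\TT^2$. Then $\mathcal D\cup D$ is a sphere bounding a ball $B$ in the irreducible manifold $M$, and because $H$ is not contained in a ball the interior of $B$ is disjoint from $H$. Isotoping $H$ across $B$ replaces $\mathcal D$ by $D$; this is exactly the surgery $H\rightsquigarrow H/D$ of Definition~\ref{d:Compression} (the resulting sphere component is discarded), so by Lemma~\ref{l:NonCompressionEffect} it does not change $\plex{\CC\EE_{\TT^2}(H,\TT^1)}$. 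It strictly decreases $|H\cap\TT^2|$ and, since near $\bdy D$ the surface $H$ is disjoint from $\TT^2$, it creates no tangencies, so the conclusion of Stage~1 persists. (The analogous containment $\EE(H,\TT^2)\subseteq\EE(H,\TT^1)$ holds automatically.) After finitely many moves $\CC\EE(H,\TT^2)\subseteq\CC\EE(H,\TT^1)$, so each $\TT^2$--disjoint disk is its own shadow and there is a simplicial map $j\colon X:=\plex{\CC\EE(H,\TT^2)}\to Y:=\plex{\CC\EE_{\TT^2}(H,\TT^1)}$.

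\medskip
\noindent\emph{Stage 3: the homotopy equivalence.} It remains to see that $j$ is a homotopy equivalence, for then $\ind X=\ind Y=n$. The map back is ``resolve the shadow'': given a vertex $D$ of $Y$ with shadow $D'$, surger $D'$ along innermost disks of $D'\cap H$ to obtain a disk $\rho(D)$ disjoint from $H\cup\TT^2$ with $\bdy\rho(D)=\bdy D$, so $\rho(D)\in\CC\EE(H,\TT^2)$ since $\bdy D$ is essential on $H-N(\TT^1)$. The only way this can fail is if some innermost disk of $D'$ cut off by a loop of $D'\cap H$ has essential boundary on $H^1$; but such a disk is itself an element of $\CC(H,\TT^2)$, and then, by the standard innermost--disk and irreducibility arguments at the end of the proof of Lemma~\ref{l:NonCompressionEffect}, one produces an element of $\CC\EE_{\TT^2}(H,\TT^1)$ disjoint from every other element, forcing $Y$ to be contractible and contradicting the hypothesis. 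Choosing the shadows and the innermost--disk surgeries coherently for pairwise disjoint collections of disks makes $\rho$ a simplicial retraction onto the image of $j$, and the track of the resolution supplies a homotopy $j\circ\rho\simeq\mathrm{id}_Y$; hence $X$ is a deformation retract of $Y$ and $\ind\plex{\CC\EE(H,\TT^2)}=n$.

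\medskip
\noindent\emph{Main obstacle.} The delicate point is Stage~3: verifying that the shadow resolutions can be chosen compatibly across all simplices of $Y$ (disks adjacent in $Y$ resolving to disks adjacent in $X$, and the tracks fitting together into a genuine homotopy), which is in effect the assertion $\plex{\CC\EE(H,\TT^2)}\simeq\plex{\CC\EE_{\TT^2}(H,\TT^1)}$. This is exactly where it matters that every tangency has been put into the interior of a $2$--simplex as a saddle, since that is what allows a shadow to be pushed off $\TT^2$ through $H$ without obstruction. A secondary point, needed so the conclusion reads ``index $n$'' rather than merely ``index $\le n$,'' is the claimed invariance of $\ind\plex{\CC\EE_{\TT^2}(H,\TT^1)}$ under the isotopies of Stages~1 and~2.
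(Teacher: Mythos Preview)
Your three-stage plan is essentially the paper's argument, but you have made two of the stages harder than necessary, and the ``main obstacle'' you flag is in fact a non-issue once you see the simpler route.

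\textbf{Stage~1 is superfluous.} The paper does not separately perturb and push off center tangencies. Instead, it observes that a center tangency automatically produces a ``bad'' disk $D\in\CC\EE(H,\TT^2)\setminus\CC\EE(H,\TT^1)$ (the small subdisk of the $2$--simplex bounded by the tangent circle). Thus the bad-disk surgeries of your Stage~2 already eliminate all center tangencies, and the invariance of $\plex{\CC\EE_{\TT^2}(H,\TT^1)}$ follows from a single appeal to Lemma~\ref{l:NonCompressionEffect}. Your separate Stage~1 is correct in outline, but the claim that the local push-off preserves $\plex{\CC\EE_{\TT^2}(H,\TT^1)}$ needs more than ``a routine innermost-disk argument,'' since that complex depends on the position of $H$ relative to $\TT^2$, not just $\TT^1$.

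\textbf{Stage~3: the complexes are equal, not merely homotopy equivalent.} You build a resolution $\rho:Y\to X$ and then worry about choosing shadows and surgeries coherently across simplices to get a homotopy $j\circ\rho\simeq\mathrm{id}_Y$. But note what your own construction gives: $\bdy\rho(D)=\bdy D$ and the interior of $\rho(D)$ is disjoint from $H$, so by irreducibility of $M^1$ the disks $\rho(D)$ and $D$ are isotopic in $M^1$ and hence represent \emph{the same vertex} of $Y$. Thus $j\circ\rho=\mathrm{id}_Y$ on the nose at the vertex level, and (since disjoint boundaries can be capped by disjoint disks in the irreducible complement of $\TT^2$) the map $j$ is a simplicial isomorphism. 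This is exactly what the paper does: after Stage~2 there are no bad disks, so for any innermost loop $\beta$ of $E'\cap H^1$ the subdisk of $H^1$ it bounds is disjoint from $\TT^2$; the innermost-disk swaps therefore take place entirely in $M-N(\TT^2)$, yielding a disk with the same boundary as $E$ and hence equal to $E$ as a vertex. No coherence across simplices is required, and your ``delicate point'' disappears.

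In short, your proof is on the right track and would succeed, but the paper's version is cleaner: fold Stage~1 into Stage~2, and in Stage~3 replace the homotopy-equivalence argument with the observation that the shadow resolution is the identity on vertices of $Y$, so $\plex{\CC\EE(H,\TT^2)}=\plex{\CC\EE_{\TT^2}(H,\TT^1)}$.
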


\begin{proof}
Suppose $D \in \CC\EE(H,\TT^2) - \CC\EE(H,\TT^1)$. Then $\bdy D$ cuts off a subdisk $D'$ of $H^1$ that meets $\TT^2$. The surface $H/D$ is obtained by exchanging $D'$ with $D$. By the incompressibility of $\bdy M$ and the irreducibility of $M$, this surface is isotopic to $H$ in $M$. As  $H/D$ meets $\TT^2$ fewer times,  a similar surgery can only be performed a finite number of times. Thus, after a maximal sequence of such surgeries we obtain a surface isotopic to $H$ (which we continue to denote by the same letter), where every compressing or edge-compressing disk for $H-N(\TT^2)$ is a compressing or edge-compressing disk for $H^1$. It follows that there are no longer any center tangencies in $H \cap \TT^2$. Furthermore, by Lemma \ref{l:NonCompressionEffect} the complex $\plex{\CC\EE_{\TT^2}(H,\TT^1)}$ has remained unchanged. Thus, if this complex had homotopy index $n$ to begin with, then it still has homotopy index $n$.

Now suppose $D \in \plex{\CC\EE(H,\TT^2)}$. Then by the previous paragraph, $D \in \plex{\CC\EE(H,\TT^1)}$. Furthermore, as $D$ lies in the complement of $\TT^2$, $D \in \plex{\CC\EE_{\TT^2}(H,\TT^1)}$. We conclude $\plex{\CC\EE(H,\TT^2)} \subset \plex{\CC\EE_{\TT^2}(H,\TT^1)}$. We now claim  the opposite inclusion is true as well, and thus $\plex{\CC\EE(H,\TT^2)} = \plex{\CC\EE_{\TT^2}(H,\TT^1)}$.

Suppose $E \in \plex{\CC\EE_{\TT^2}(H,\TT^1)}$. Let $E'$ be a shadow of $E$. Let $\beta$ be a loop of $E' \cap H^1$ that is innermost on $E'$. Then $\beta$ bounds subdisks $C \subset E'$ and  $C' \subset H^1$. If $C' \cap \TT^2 \ne \emptyset$, then $C$ is a compressing disk for $H-N(\TT^2)$ that is not a compressing disk for $H^1$, a contradiction. We conclude $C' \cap \TT^2=\emptyset$. Since $E' \cap \TT^2=\emptyset$ and $C \subset E'$, we conclude $C \cap \TT^2=\emptyset$. The sphere $C \cup C'$ thus bounds a ball in the complement of $\TT^2$ that we can use to guide an isotopy of $C$ to $C'$. (This may remove other components of $E' \cap C$ as well.) We thus transform the disk $E'$ to a disk $E''$ such that $\bdy E''=\bdy E$, $E'' \cap \TT^2=\emptyset$, and $|E'' \cap H^1|<|E' \cap H^1|$. Continuing in this way we arrive at a compressing disk for $H^1$ with the same boundary as $E$, which is disjoint from $\TT^2$. Thus $E \in \plex{\CC\EE(H,\TT^2)}$, completing the proof that $\plex{\CC\EE_{\TT^2}(H,\TT^1)} \subset \plex{\CC\EE(H,\TT^2)}$
\end{proof}

\begin{lem}
\label{c:2skeleton}
Suppose $H$ is transverse to $\TT^2$ away from a finite number of saddle tangencies, and  $\plex{\CC\EE(H,\TT^2)}$ has well-defined homotopy index. Then $H$ is transverse to $\TT^2$, and meets each 2-simplex of $\TT^2$ in a collection of normal arcs.
\end{lem}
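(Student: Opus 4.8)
The plan is to argue by contradiction, using the same engine that drives the proof of Theorem~\ref{t:CB^r(H)}: if a disk complex has a vertex represented by a disk that can be isotoped off representatives of all the other vertices, then the complex is a cone on that vertex, hence contractible, hence has no well‑defined homotopy index. So it suffices to show that every way in which $H$ can fail to be transverse to $\TT^2$ with normal intersection arcs produces such a ``trivial'' disk in $\CC\EE(H,\TT^2)$.

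First I would remove the tangencies. If $H$ has a saddle tangency with a 2-simplex $\sigma$ at a point $p$ (necessarily in the interior of $\sigma$), push $H$ off $\sigma$ to one side in a small ball about $p$ that misses the rest of $H$ and all other tangencies. This is a $C^0$-small isotopy, and by the perturbation invariance of $\plex{\CC\EE_{\TT^2}(H,\TT^1)}$ noted in the proof of Theorem~\ref{t:SecondTransitionTheorem}, together with the identity $\plex{\CC\EE(H,\TT^2)}=\plex{\CC\EE_{\TT^2}(H,\TT^1)}$ established in the proof of Theorem~\ref{t:2skeleton}, it changes neither $\plex{\CC\EE(H,\TT^2)}$ nor the fact that its homotopy index is defined, while it strictly decreases the finite number of tangency points. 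Iterating, I may assume $H$ is transverse to $\TT^2$. Next I clean up inessential intersections: if for some 2-simplex $\sigma$ a component of the $1$-manifold $H\cap\sigma$ is inessential on $H$---a closed curve bounding a subdisk of $H$, or an arc cutting one off---then, choosing an innermost witness, it together with a subdisk of $\sigma$ bounds a disk or a sphere, which by the irreducibility of $M$ and incompressibility of $\bdy M$ bounds a ball; isotoping $H$ across this ball (chosen disjoint from a minimal-position family of disk representatives) removes the component and reduces $|H\cap\TT^2|$ without changing $\plex{\CC\EE(H,\TT^2)}$. After finitely many such moves no component of any $H\cap\sigma$ is inessential on $H$.

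Now I extract the trivial disk. Since $H\cap\sigma$ is a collection of normal arcs exactly when it has no closed components and every arc joins two distinct edges of $\sigma$, suppose this still fails for some interior 2-simplex $\sigma$; then $H\cap\sigma$ contains either a closed curve or an arc $\gamma$ with both endpoints on one edge $e$ of $\sigma$. Choose such a component innermost (resp.\ outermost) in $\sigma$, cutting off a subdisk $d\subset\sigma$; since an arc of $H\cap\sigma$ has its ends on $\bdy\sigma\subset\TT^1$, the interior of $d$ contains no component of $H\cap\sigma$, so $\mathrm{int}(d)\cap H=\emptyset$. Push $d$ slightly off $\sigma$ to a disk $D$ whose interior is disjoint from $\TT^2$ and from $H$. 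In the closed-curve case $\bdy D\subset H$, and by the previous paragraph $\bdy D$ is essential on $H-N(\TT^2)$, so $D\in\CC(H,\TT^2)$; in the arc case $\bdy D=\gamma'\cup\delta'$ with $\gamma'$ an arc on $H$ and $\delta'\subset e\subset\TT^1$, so $D\in\EE(H,\TT^2)$. Either way $D\in\CC\EE(H,\TT^2)$ and $D$ is isotopic into $\TT^2$. Finally, let $X$ be any other element of $\CC\EE(H,\TT^2)$: its interior misses $\TT^2$, it meets $\TT^2$ at most along an arc of $\TT^1$, and $\bdy X\cap H$ lies in $H-N(\TT^2)$ up to isotopy; so after making $X$ standard in a thin collar of $\TT^2$ and using innermost-disk surgeries together with the irreducibility of $M^1$ to clear interior intersections, $D$ may be pushed to hug $d$ inside that collar and hence made disjoint from $X$. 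Thus $[D]$ is a cone vertex of the non-empty complex $\plex{\CC\EE(H,\TT^2)}$, which is therefore contractible---contradicting the hypothesis that its homotopy index is well defined. Hence $H\cap\sigma$ is a collection of normal arcs for every interior 2-simplex $\sigma$ (the boundary 2-simplices being handled by the minimality of $|\bdy H\cap\TT^1|$ imposed in Stage~2), and, combined with the first step, $H$ is transverse to $\TT^2$.

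The step I expect to be the main obstacle is the first one: certifying that pushing off a saddle tangency genuinely leaves $\plex{\CC\EE(H,\TT^2)}$ unchanged (equivalently, that the mere presence of a saddle tangency with $\TT^2$ would already force contractibility). The remaining delicate points---that the cleanup isotopies do not disturb the complex, and the bookkeeping needed when the disk $X$ above is an edge-compressing disk sharing the edge $e$ with $D$---are standard innermost-disk arguments of the kind used in the proof of Lemma~\ref{l:NonCompressionEffect}.
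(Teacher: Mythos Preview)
Your proof has a genuine gap in step~1, and it is precisely the one you flagged. The lemma's conclusion is about the \emph{given} surface $H$: it asserts that $H$ itself is transverse to $\TT^2$. So the task is not to isotope $H$ into a good position, but to show that if $H$ has a saddle tangency (or a loop, or a non-normal arc) in some 2-simplex, then $\plex{\CC\EE(H,\TT^2)}$ is contractible. Your argument does not accomplish this. Suppose $H$ has a single saddle tangency with a 2-simplex $\sigma$, and the resolution you choose in step~1 yields only normal arcs in $\sigma$. Then step~3 never fires, and you have only established that the \emph{isotoped} surface meets $\TT^2$ in normal arcs---the original $H$ still has its saddle, so the conclusion ``$H$ is transverse to $\TT^2$'' is false for it, and you have derived no contradiction with the hypothesis. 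Moreover, the invariance of $\plex{\CC\EE(H,\TT^2)}$ under saddle resolution is not what the perturbation remark in Theorem~\ref{t:SecondTransitionTheorem} provides: that remark concerns perturbations of a surface already transverse to $\TT^2$, whereas resolving a saddle changes the combinatorics of $H\cap\TT^2$ (indeed, Claim~\ref{c:subset} in that proof only gives an inclusion $V_\sigma\subset V_\tau$, not an equality).

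The paper's proof avoids this entirely by treating the saddle case exactly as you treat the loop and non-normal arc cases in your step~3. If $H\cap\sigma$ contains a saddle tangency, a closed loop, or a non-normal arc, then some complementary region $d$ of $H\cap\sigma$ in $\sigma$ can be pushed slightly off $\sigma$ to give a disk $D\in\CC\EE(H,\TT^2)$. (For a saddle: the singular curve $H\cap\sigma$ separates $\sigma$ into regions, and a region whose closure contains the saddle point, once pushed into an adjacent tetrahedron, has its boundary corner at the saddle open up into an embedded arc on $H$.) Since $D$ is isotopic into $\bdy(M-N(\TT^2))$, it can be made disjoint from every other element of $\CC\EE(H,\TT^2)$, so the complex is a cone on $[D]$ and hence contractible, contradicting the hypothesis. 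No isotopy of $H$ is needed; the three failure modes are handled by a single sentence. Your steps~2 and~3 are essentially this argument for the loop and non-normal arc cases---the missing piece is to run the same argument directly for the saddle, rather than trying to isotope it away first.
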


\begin{proof}
Suppose $\Delta$ is a 2-simplex of $\TT^2$. If $H \cap \Delta$ contains a loop, a non-normal arc, or a saddle tangency, then some subdisk $D$ of $\Delta$ that can be pushed off $\TT^2$ to form a compressing or edge-compressing disk for $H-N(\TT^2)$. We conclude $\plex{\CC\EE(H,\TT^2)}$ could not have been empty. As $D$ is isotopic into $\bdy (M-N(\TT^2))$, it can be made disjoint from any other element of $\plex{\CC\EE(H,\TT^2)}$. This implies every vertex of $\plex{\CC\EE(H,\TT^2)}$ is connected to $D$ by an edge, contradicting the assumption that $\plex{\CC\EE(H,\TT^2)}$ is not contractible.
\end{proof}

\section{From $H$ to the components of $H-N(\TT^2)$.}

\begin{thm}
\label{t:IndexSum}{\rm [cf. \cite{TopIndexI}, Theorem 4.7.]}
Suppose $\ind\plex{\CC \EE(H,\TT^2)} = n$. Then 
\[\sum \limits _{\Delta \in \TT^3} \ind\plex{\CC \EE(H \cap \Delta,\TT^2)} = n.\]
\end{thm}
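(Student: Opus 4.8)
The statement asserts that the homotopy index of the global disk complex $\plex{\CC\EE(H,\TT^2)}$ equals the sum of the local homotopy indices $\ind\plex{\CC\EE(H\cap\Delta,\TT^2)}$ over all tetrahedra $\Delta$. The natural approach is to exhibit $\plex{\CC\EE(H,\TT^2)}$ as a \emph{join} of the local complexes, since the homotopy index of a join behaves additively: if $\ind[X_i]=n_i$ and each $[X_i]$ is non-empty (so its index is well-defined as the smallest $k$ with $\pi_{k-1}$ nontrivial), then the join $X_1 * \cdots * X_m$ has $\pi_{(n_1+\cdots+n_m)-1}$ nontrivial and lower homotopy groups trivial, giving $\ind = \sum n_i$. (When some $[X_i]=\emptyset$ it contributes $0$ to the sum and acts as an identity for the join, so the formula still reads correctly; one should state and use the convention that $\emptyset * Y = Y$.) So the crux is the combinatorial identity
\[
\plex{\CC\EE(H,\TT^2)} \;=\; \bigast_{\Delta\in\TT^3} \plex{\CC\EE(H\cap\Delta,\TT^2)}.
\]

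First I would set the stage using the results already proved: by Theorem \ref{t:2skeleton} and Lemma \ref{c:2skeleton} we may assume $H$ is transverse to $\TT^2$ and meets each $2$-simplex in normal arcs, so every disk in $\CC\EE(H,\TT^2)$ lies in the complement of $\TT^2$ and hence is contained in a single tetrahedron. This gives a well-defined map sending each vertex (equivalence class of a compressing or edge-compressing disk $D$) to the tetrahedron containing it, and identifies the vertex set of $\plex{\CC\EE(H,\TT^2)}$ with the disjoint union of the vertex sets of the $\plex{\CC\EE(H\cap\Delta,\TT^2)}$. The key geometric fact to verify is that adjacency is \emph{automatic} across tetrahedra: two disks lying in distinct tetrahedra $\Delta\neq\Delta'$ can always be isotoped (rel $\TT^2$, in the complement of $\TT^2$) to be disjoint, because their supports are separated by $\TT^2$. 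Conversely, within a single tetrahedron, a collection of disks spans a simplex in the global complex if and only if it does so in the local complex, essentially by definition. Therefore a set of vertices spans a simplex in the global complex precisely when, for each tetrahedron, the vertices lying in that tetrahedron span a simplex locally — which is exactly the definition of a simplex in the join. One subtlety to address carefully: a compressing/edge-compressing disk for $H$ need not obviously be a compressing/edge-compressing disk for the piece $H\cap\Delta$ of $H$ in the tetrahedron containing it, because essentiality of $\bdy D$ on $H$ must be compared with essentiality on $H\cap\Delta$; I would argue (using that $H\cap\Delta$ meets $\bdy\Delta$ in normal arcs and loops, and that $\TT^2$ is incompressible) that the two notions of essentiality coincide, or at least that disks bounding inessential curves in the local picture would force contractibility of the relevant complex, contradicting well-definedness of the index.

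The main obstacle I anticipate is precisely this last comparison of "essential" in the global versus local surfaces, together with the bookkeeping of edge-compressing disks incident to edges of $\TT^1$ that lie on faces shared by two tetrahedra — one must be sure such a disk is counted in exactly one tetrahedron and that its interactions are handled consistently. A secondary, more routine obstacle is proving cleanly the homotopy-index-of-a-join lemma with the right handling of empty factors and the non-contractibility hypothesis; this is standard (the reduced homology of a join is the tensor product of reduced homologies, shifted, plus a Künneth/Hurewicz argument to pass between homotopy and homology as is done elsewhere in the paper, e.g.\ in the proof of Theorem \ref{t:GammaBB(H)disconnectedToGammaBB(H1)disconnected}), but it needs to be stated precisely. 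Once the join decomposition is established, the theorem follows immediately by induction on the number of tetrahedra meeting $H$.
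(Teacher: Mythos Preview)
Your plan is correct and matches the paper's proof essentially line for line: the paper also observes that disks in distinct tetrahedra are automatically disjoint, hence $\plex{\CC\EE(H,\TT^2)}$ is the join of the local complexes, and then inducts on the number of tetrahedra using Milnor's K\"unneth-type formula for the reduced homology of a join together with Hurewicz to pass between homotopy and homology. The one detail worth flagging that you allude to but do not spell out is the $n=2$ case of the homotopy-to-homology passage: the paper handles this by showing (via contractibility of the one-sided disk subcomplexes, as in McCullough \cite{mccullough:91}) that $\pi_1$ of each factor is free, so a nontrivial $\pi_1$ forces a nontrivial $H_1$.
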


\begin{proof}
Let $\{\Delta_i\}$ denote the 3-simplices in $\TT^3$. Let $H_i=H \cap \Delta_i^3$. Notice that elements of $\CC\EE(H_i, \TT^2)$ and $\CC\EE(H_j, \TT^2)$ are disjoint when $i \ne j$.  Hence, $\plex{\CC \EE(H, \TT^2)}$  is the join of all of the complexes $\plex{\CC\EE(H_i, \TT^2)}$. 

The proof is by induction on $m$, the number of tetrahedra in $\TT$. The base case $m=1$ is trivial.  Let $G=\bigcup \limits _{i=2} ^m H_i$, so that $\plex{\CC\EE(H, \TT^2)}$ is the join of $\plex{\CC\EE(H_1, \TT^2)}$ and $\plex{\CC\EE(G, \TT^2)}$.

We first observe that $\plex{\CC\EE(H_1, \TT^2)}$ and $\plex{\CC\EE(G, \TT^2)}$ are not contractible. This is because the join of a contractible space with any other space is also contractible. We conclude that is either complex is contractible then the complex $\plex{\CC \EE(H, \TT^2)}$ would have been contractible, a contradiction. Since neither is contractible, each has a well-defined homotopy index. 

If either $H_1$ or $G$ has local index 0 then the result is immediate, as the join of a complex with the empty set is the same complex. We assume, then, that the local index of $H_1$  is $n \ge 1$ and the local index of $G$  is $m \ge 1$.

By definition, $(n-1)$ is the smallest $i$ such that $\pi_i(\plex{\CC\EE(H_1, \TT^2)}) \ne 1$, and $(m-1)$ is the smallest $j$ such that  $\pi_j(\plex{\CC\EE(G, \TT^2)}) \ne 1$. Our goal is to show that $(n+m-1)$ is the smallest $k$ such that \[\pi_{k} (\plex{\CC\EE(H_1 \cup G, \TT^2)}) = \pi_{k} (\plex{\CC\EE(H_1, \TT^2)} * \plex{\CC\EE(G, \TT^2)}) \ne 1.\]

When $n=2$ then $\pi_1(\plex{\CC\EE(H_1, \TT^2)}) \ne 1$. Suppose $H_1$ separates $\Delta_1^3$ into $\VV$ and $\WW$. Let $\VV(H_1)$ and $\WW(H_1)$ denote the subsets of $\CC\EE(H_1, \TT^2)$ spanned by the compressing and edge-compressing disks that lie in $\VV$ and $\WW$, respectively. By an argument identical to the one given by McCullough in \cite{mccullough:91}, $\plex{\VV(H_1)}$ and $\plex{\WW(H_1)}$ are contractible. If we contract these to points $p_\VV$ and $p_\WW$, then the remaining 1-simplices of $\plex{\CC\EE(H_1, \TT^2)}$ join these two points. The fundamental group $\pi_1(\plex{\CC\EE(H_1, \TT^2)})$ is generated by these 1-simplices. The remaining 2-simplices have become bigons that run once over each of two 1-simplices. Hence, each such 2-simplex gives rise to a relation in $\pi_1(\plex{\CC\EE(H_1, \TT^2)})$ that kills one generator. It follows that $\pi_1(\plex{\CC\EE(H_1, \TT^2)})$ is free, and hence the non-triviality of $\pi_1(\plex{\CC\EE(H_1, \TT^2)})$ implies that the homology group $H_1(\plex{\CC\EE(H_1, \TT^2)})$ is also non-trivial. Similarly, if $m=2$ we conclude $H_1(\plex{\CC\EE(G, \TT^2)})$ is non-trivial. For $n \ge 3$ the non-triviality of $H_{n-1}(\plex{\CC\EE(H_1, \TT^2)})$ follows from the Hurewicz Theorem. 

By Lemma 2.1 from \cite{milnor}:
\begin{eqnarray*}
\tilde H_{n+m-1} (\plex{\CC\EE(H_1, \TT^2)} &*& \plex{\CC\EE(G, \TT^2)})\\
 & \cong & \sum \limits _{i+j=n+m-2} \tilde H_i(\plex{\CC\EE(H_1, \TT^2)}) \otimes \tilde H_j(\plex{\CC\EE(G, \TT^2)})\\
&& + \sum \limits _{i+j=n+m-3} {\rm Tor}(\tilde H_i(\plex{\CC\EE(H_1, \TT^2)}), \tilde H_j(\plex{\CC\EE(G, \TT^2)})).
\end{eqnarray*}

In particular, it follows from the fact that $(n-1)$ is the smallest $i$ such that $H_i(\plex{\CC\EE(H_1, \TT^2)})$ is non-trivial, and $(m-1)$ is the smallest $j$ such that  $H_j(\plex{\CC\EE(G, \TT^2)})$ is non-trivial, that $(n+m-1)$ is the smallest $k$ such that $H_{k} (\plex{\CC\EE(H_1, \TT^2)} * \plex{\CC\EE(G, \TT^2)})$ is non-trivial. 

\end{proof}

\bibliographystyle{alpha}

\end{document}